\RequirePackage{fix-cm}
\documentclass[smallextended]{svjour3}       
\smartqed  
\usepackage{graphicx}
\usepackage[utf8]{inputenc}
\usepackage[english]{babel}
\usepackage{bbm}

\usepackage[nottoc]{tocbibind}
\usepackage{mathrsfs,amsfonts,amssymb,amsmath}

\usepackage{amsthm}
\usepackage{enumerate}
\usepackage{graphicx,cite}
\usepackage{romannum}
\usepackage{hyperref}
\usepackage{authblk}
\usepackage{graphicx}
\usepackage{tikz}
\usetikzlibrary{decorations.pathreplacing, arrows.meta}

\allowdisplaybreaks

\hypersetup{colorlinks=true, linkcolor=blue, citecolor=red}

\DeclareMathOperator{\diam}{diam}

\DeclareMathOperator{\Leb}{Leb}

\DeclareMathOperator{\interior}{int}

\numberwithin{equation}{section}

\newtheorem{assumption}{Assumption}

\usepackage[square,numbers]{natbib}
\begin{document}
\pagenumbering{arabic}
\setlength{\belowdisplayskip}{0pt}

\title{Statistical properties for irregular observables in slowly mixing hyperbolic systems}


\author{Leonid A. Bunimovich \and  Yaofeng Su   
        
}


\institute{Leonid A. Bunimovich \at
              School of Mathematics, Georgia Institute of Technology,  Atlanta, USA\\
             \email{leonid.bunimovich@math.gatech.edu}
             \and
             Yaofeng Su  \at
             Southern University of Science and Technology, Shenzhen, China\\ 
              \email{suyaofeng@sustech.edu.cn}            
}

\date{Received: date / Accepted: date}

\maketitle

\begin{abstract} 
We prove various statistical properties (i.e., decay of correlations, central limit theorems with convergence rates, maximal large deviations, and almost sure invariance principles) for non-smooth observables in the form of indicator functions in polynomially mixing hyperbolic dynamical systems. Such results were not known (or even expected to hold) for slowly mixing hyperbolic systems, although they are important for applications to physics and other sciences. 

\keywords{decay of correlations \and limit theorems \and irregular observables}
\end{abstract}

\tableofcontents

\section{Introduction}\ \par

Statistical properties are the major characteristics of  dynamical systems with chaotic behavior. The corresponding mathematical results in this area always assumed that functions (observables) on the phase space have some regularity. For instance, traditionally it was assumed that the observables satisfy a (local) H\"older condition. 
However, an analysis of a fundamental process of transport in the phase space of a system in question requires considering the evolution of characteristic (indicator) functions of measurable subsets of the phase space. Likewise, the experiments (real as well as numerical ones) deal with characteristic functions of such subsets. Indeed, in numerical simulations (computer experiments) orbits under investigation start within some subset (sometimes an element of some partition) of the phase space. In real experiments initial conditions are just slightly changed (fluctuate) because a system is desired to be initially maintained in some state (which of course fluctuates from one experiment to another because of interactions with an ``environment", that experimentalists try to minimize). Various such experiments are discussed e.g.  in the books \cite{ott, Strogatz}.

The present paper deals with  statistical properties of hyperbolic dynamical systems,  which form an important class among the so-called dynamical systems with chaotic behavior, and some of such systems appear as classical models in mechanics, statistical mechanics, quantum mechanics, optics as well as in various other applications in science and engineering \cite{laser, Prl, Prl1}. In most of these applications hyperbolic dynamical systems are slowly mixing. So, correlations there decay slowly, i.e. power-like. For this class of systems, \textbf{we derive new methods and obtain a series of new results on statistical properties (including decay of correlations, central limit theorems with convergence rates, maximal large deviations and almost sure invariance principles) when the observable belongs to a class of irregular indicator functions}.  Particularly, these results are applicable to  hyperbolic billiard systems with polynomial (as well as with exponential) mixing rates. Previously, \begin{enumerate}
    \item Such statistical properties for fast/slowly mixing hyperbolic systems were derived only for regular observables, and the methods used there essentially rely on the smoothness of observables  (see, e.g., \cite{MN1, MN2,melbourne09,alexey1,alexey,nicolldp,Y,denkerbook,KOREPANOVcoboundary,Chazottes2015, REY-BELLET_YOUNG_2008,penerate});
    \item Such statistical properties for irregular observables of bounded variation were obtained for exponential mixing expanding systems (see e.g., \cite{LIVERANI_2013,saussolmap,baladibook} and the references therein). However, such systems form  a rather  tiny class among all dynamical systems of physical interest.
\end{enumerate} 

It was pointed out that some limit theorems with irregular observables (e.g., the Birkhoff ergodic theorem) hold for some hyperbolic systems, see e.g. \cite{Sinai1995}, but their convergence rate can be arbitrarily slow \cite{ergodicrate} depending on how 
``bad" the regularity of observables is. In contrast, in the present  paper we prove that convergence rates of limit theorems considered here have uniform upper bounds for our class of irregular observables.

The organization of the paper is as follows. In section \ref{cmz} we define the main object of our study, i.e., hyperbolic systems with Chernov-Markarian-Zhang (CMZ) structures. The sections \ref{decorrelation}-\ref{nonsmoothasip} contain the main results for CMZ structures, including decay of correlations, maximal large deviations, central limit theorems with convergence rates, and almost sure invariance principles. Applications of the main theorems to hyperbolic billiards are considered in section \ref{app}.
\\\\ \text{Notation and conventions}\begin{enumerate}
 \item $C_z$ denotes a constant depending on $z$. 
    \item The notation $``a_n \precsim_z b_n"$  ($``a_n=O_{z}(b_n)"$) means that there is a constant $C_z \ge 1$ such that (s.t.) $ |a_n| \le C_z  |b_n|$ for all $n \ge 1$, whereas the notation $``a_n \precsim b_n"$ (or $``a_n=O(b_n)"$) means that there is a constant $C \ge 1$ such that $ |a_n| \le C  |b_n|$ for all $n \ge 1$. Next, $``a_n \approx_z b_n"$ and $a_n=C_z^{\pm 1}b_n$ mean that there is a constant $C_z \ge 1$ such that  $ C_z^{-1}  |b_n| \le |a_n | \le C_z |b_n|$ for all $n \ge 1$. Further, the notation $``a_n \approx b_n"$ means that there is a constant $C \ge 1$ such that $ C^{-1}  |b_n| \le |a_n | \le C |b_n|$ for all $n \ge 1$. Finally, $``a_n =o(b_n)"$ means that  $\lim_{n \to \infty}|a_n/b_n|=0$. 
    \item By $\mathbbm{1}_A$ we denote the characteristic function (indicator) of a measurable set $A$.
    \item $\mathbb{N}_0=\{0,1,2,3,\cdots\}$, $\mathbb{N}=\{1,2,3,\cdots\}$.
    \item For any $n$-dimensional (sub)manifold $M$, $\Leb_M$ denotes the $n$-dimensional  Lebesgue measure, $\mu_M$ denotes a measure defined on $M$.
\end{enumerate}

\section{Hyperbolic systems with Chernov-Markarian-Zhang (CMZ) structures}\label{cmz}

In this section we define an invertible, polynomially mixing hyperbolic dynamics $f$ on a bounded Riemannian manifold $(\mathcal{M},\Leb_{\mathcal{M}}, d)$ with a \textbf{Chernov-Markarian-Zhang (CMZ) structure}, where $d$ is a Riemannian distance and $\Leb_{\mathcal{M}}$ is the Lebesgue measure of $\mathcal{M}$. The systems with such structures are a class of hyperbolic billiards, see \cite{bbb, hongkun,markarian, billiardwithvariousrates,Y,poisson,pene}.

 Suppose that there is a fixed closed subset $X\subseteq \mathcal{M}$ with $\Leb_{\mathcal{M}}(X)>0$, called an induced subset. The first return time to $X$ is $R: X\to \mathbb{N}$. We assume that $X$ can be partitioned into countably many connected domains $X_i$, i.e.,
\begin{equation}\label{partition}
    X=\bigcup_{i\ge 1} X_i \mod{0},
\end{equation} so that $R$ is constant on each $X_i$, $\gcd \{R\}=1$ and  \[\Leb_{\mathcal{M}} (\partial X_i)=0,\quad \interior{X_i} \bigcap \interior{X_j}=\emptyset \text{ for } i \neq j.\]

 Denote by $\mathbb{S}\subseteq X$ the singularity set for $f^R$, which has zero Lebesgue measure, $\bigcup_{i\ge 1}\partial X_i\subseteq \mathbb{S}$ and $\mathbb{S}^c \subseteq X$ consists of countably many open connected components. Unstable manifolds $\gamma^u$ (resp. stable manifolds $\gamma^s$) in $X$ are the connected components of $(\bigcup_{i \ge 0}(f^R)^{i}\mathbb{S})^c$ (resp. $(\bigcup_{i \ge 0}(f^R)^{-i}\mathbb{S})^c$). From the construction, $\{f^i\gamma^u, 0\le i < R|_{\gamma^u}\}$ $(\text{resp. } \{f^i\gamma^s, 0\le i < R|_{\gamma^s}\})$
 are unstable manifolds (resp. stable manifolds) in $\mathcal{M}$.  
 A closed and connected set of the unstable (resp. stable) manifold in $\mathcal{M}$ will be called an unstable (resp. stable) disk. For convenience, we  denote each unstable (resp. stable) manifold/disk in $\mathcal{M}$ by $\gamma^u$ (resp. $\gamma^s$) throughout. The induced metrics on $\gamma^u, \gamma^s$ are $d_{\gamma^u}, d_{\gamma^s}$ respectively.

 Suppose that there is a family of disjoint stable disks $\Gamma^s$ in $X$ and a family of disjoint unstable disks $\Gamma^u$ in $X$ such that each $\gamma^u\in \Gamma^u$ intersects each $\gamma^s \in \Gamma^s$ at only one point. Also assume that there is a hyperbolic product structure $\Lambda:=\left(\bigcup_{\gamma^s \in \Gamma^s} \gamma^s\right) \bigcap \left(\bigcup_{\gamma^u\in \Gamma^u} \gamma^u\right) \subsetneq X$, a partition of $\Lambda:=\bigsqcup_{i\ge 0} \Lambda_i$ where $\Lambda_i=\left(\bigcup_{\text{some } \gamma^s\in \Gamma^s}\gamma^s\right)\bigcap \left(\bigcup_{\gamma^u\in \Gamma^u}\gamma^u\right) $, and a return (to $\Lambda$) time function $R_p: \Lambda \to \mathbb{N}$ where $R_p|_{\Lambda_i}$ is constant, such that $f^{R_p}\Lambda_i=\left(\bigcup_{\gamma^s \in \Gamma^s} \gamma^s\right) \bigcap \left(\bigcup_{\text{some }\gamma^u\in \Gamma^u} \gamma^u\right)$.

 Define a hyperbolic Young tower $\Delta$ and a map $F : \Delta \to \Delta$ by the following relations
\begin{align*}
    &\Delta:=\{(x,l) \in \Lambda \times \mathbb{N}_0: 0 \le l < R_p(x) \},  \\
    &F(x,l):=\begin{cases}
 (x,l+1),      &l < R_p(x)-1\\
\left(f^{R_p}(x),0\right),  & l=R_p(x)-1\\
\end{cases}.
\end{align*}

An equivalence relation $\sim$ on $\Delta$ is defined by
\[(x,m) \sim (y,n) \text{ if and only if } m=n, x,y \in \gamma^s \text{ for some } \gamma^s \in \Gamma^s.\]

Now one can define a quotient expanding Young tower  $\widetilde{\Delta}:=\Delta/\sim$,  quotient hyperbolic product structures $\widetilde{\Lambda}:=\Lambda/\sim, \widetilde{\Lambda}_i:=\Lambda_i/\sim$, quotient maps $\widetilde{F} : \widetilde{\Delta} \to \widetilde{\Delta}$, $\widetilde{F^{R_p}}: \widetilde{\Lambda} \to \widetilde{\Lambda}$, and canonical projections $\widetilde{\pi}_{\Delta}:\Delta \to \widetilde{\Delta}$ and $\widetilde{\pi}_{\Lambda}:\Lambda \to \widetilde{\Lambda}$ such that $\widetilde{F} \circ \widetilde{\pi}_{\Delta} =\widetilde{\pi}_{\Delta}\circ F$, $\widetilde{F^{R_p}}\circ \widetilde{\pi}_{\Lambda}=\widetilde{\pi}_{\Lambda}\circ F^{R_p}$. We introduce a family of partitions $(\mathcal{Q}_k)_{k \ge 0}$ of $\Delta$ as 
\[\mathcal{Q}_0:=\{ \Delta\}, \quad \mathcal{Q}_1:=\{\Lambda_l\times \{k\} : l \ge 0, k < R_p|_{\Lambda_l}\}, \quad \mathcal{Q}_k:= \bigvee_{0 \le i \le k-1} F^{-i} \mathcal{Q}_1,\]
another family of partitions $(\widetilde{\mathcal{Q}}_k)_{k \ge 0}$ of $\widetilde{\Delta}$ as 
\[\widetilde{\mathcal{Q}}_0:=\{\widetilde{\Delta}\}, \quad \widetilde{\mathcal{Q}}_1:= \{\widetilde{\Lambda}_l\times \{k\} : l \ge 0, k < R_p|_{\Lambda_l}\}, \quad \widetilde{\mathcal{Q}}_k:= \bigvee_{0 \le i \le k-1} (\widetilde{F})^{-i} \widetilde{\mathcal{Q}}_1.\]

For any \(x, y \in \widetilde{\Delta}\), define a separation time by
\[
s(x,y):= \min\bigl\{ n \ge 0 \;:\; \widetilde{F}^n(x) \text{ and } \widetilde{F}^n(y) \text{ lie in different elements of } \widetilde{\mathcal{Q}}_1 \bigr\}.
\]

The dynamical systems $(\mathcal{M},f)$, $(\Delta,F)$, $ (\widetilde{\Delta}, \widetilde{F})$, $(\widetilde{\Lambda},\widetilde{F^{R_p}})$ have the following properties: there are constants $\beta \in (0,1), C\ge 1, \alpha>0$ such that  \begin{enumerate}
    \item For any $ \gamma^s$ (resp. $ \gamma^u$) $\subseteq \mathcal{M}$, $x,y \in \gamma^s$ (resp. $ \gamma^u$),  $n\ge 1$, 
    \begin{align}\label{0}
        d\big(f^n(x), f^n(y)\big) \le Cn^{-\alpha}\text{ (resp. } d\big(f^{-n}(x), f^{-n}(y)\big) \le Cn^{-\alpha}).
    \end{align}

\item $(\mathcal{M}, f)$ can be lifted to a hyperbolic Young tower, namely, there is a semi-conjugacy $\pi:\Delta \to \mathcal{M}$ defined as 
$\pi(x,l):=f^l(x)$ satisfying $\pi \circ F=f\circ \pi$. 
\item Markovian: $\widetilde{F^{R_p}}\widetilde{\Lambda}_i=\widetilde{\Lambda}$ for any $\widetilde{\Lambda}_i$.

\item Bounded distortion: for any $z_1,z_2 $ in the same element of $\widetilde{\mathcal{Q}}_1$, $
\big|\log \frac{|\det D\widetilde{F}(z_1)|}{|\det D\widetilde{F}(z_2)|}\big|
\le C \, \beta^{\, s(\widetilde{F}(z_1),\, \widetilde{F}(z_2))}$, where $|\det D\widetilde{F}|$ is the Radon-Nikodym derivative with respect to the Lebesgue measure on $\gamma^u$.

\item It follows from \cite{Y1} that there exist probability measures $\mu_{\widetilde{\Lambda}}$, $\mu_{\Lambda}$ on $\widetilde{\Lambda}, \Lambda$ respectively such that $\big(F^{R_p}\big)_{*}\mu_{\Lambda}=\mu_{\Lambda}$, $\big(\widetilde{F^{R_p}}\big)_{*}\mu_{\widetilde{\Lambda}}=\mu_{\widetilde{\Lambda}}$, $\left(\widetilde{\pi}_{\Lambda}\right)_{*}\mu_{\Lambda}=\mu_{\widetilde{\Lambda}}$. If $\int R_p d\mu_{\Lambda}< \infty$, then there exist probability measures $\mu_{\widetilde{\Delta}}$, $\mu_{\Delta}$ on $\widetilde{\Delta}$, $\Delta$ respectively such that $F_{*}\mu_{\Delta}=\mu_{\Delta}$, $\widetilde{F}_{*}\mu_{\widetilde{\Delta}}=\mu_{\widetilde{\Delta}}$, $ \left(\widetilde{\pi}_{\Delta}\right)_{*}\mu_{\Delta}=\mu_{\widetilde{\Delta}}$. An SRB probability measure $\mu_{\mathcal{M}}$ on $\mathcal{M}$ is obtained by $\pi_* \mu_{\Delta}$. An SRB probability measure on $X$ is $\mu_{X}:=\mu_{\mathcal{M}}(X)^{-1}\mu_{\mathcal{M}}|_X$.
\end{enumerate}

\begin{remark}
     For many purposes the $\gcd\{R_p\}=1$  is irrelevant provided that the dynamics $f: (\mathcal{M}, \mu_\mathcal{M}) \to (\mathcal{M}, \mu_\mathcal{M}) $ is mixing (see Remark 2.2 in \cite{melboune}). Indeed all dynamical systems, which we consider in applications (Section \ref{app}), do have a countable Markov partition. And any hyperbolic ergodic dynamical systems with singularities (e.g.  dispersing billiards) in section \ref{app} only have countably infinite Markov partition (see \cite{bunimarkovsinaibilliard}). Also an ergodic completely hyperbolic (all Lyapunov exponents do not vanish) dynamical system is mixing. Therefore Young towers are mixing. So, to simplify the argument of our proof, we only assume aperiodicity (i.e., $\gcd\{R_p\}=1$) throughout.
\end{remark}

\begin{remark}\label{importantremark}
According to Proposition 3.2 of \cite{melboune}, $\int_X R^{\xi} d\mu_{\mathcal{M}}< \infty$ for some $\xi>1$ implies $\int R_p^{\xi-\epsilon} d\mu_{\Lambda}<  \infty$ for any  $\epsilon \in (0,\xi-1)$.  
\end{remark} 

We include an important result from \cite{Y1}:

\begin{lemma}[see \cite{Y1}]\label{youngmixing}
  If $\int R_p^{\xi} d\mu_{\Lambda}< \infty$ for some $\xi>1$, then there is $C>0$ such that, for any $A\in \sigma(\mathcal{Q}_{m}), \xi' \in  \sigma(\bigcup_{i\ge 0} \mathcal{Q}_{i})$ and any $p>m$, \[\big| \int \mathbbm{1}_{A} \mathbbm{1}_{\xi'}\circ F^p d\mu_{\Delta}-\int \mathbbm{1}_{A}d\mu_{\Delta} \int  \mathbbm{1}_{\xi'}d\mu_{\Delta}\big|
    \le C (p-m)^{1-\xi}\mu_{\Delta}(A),\]
    \[\big| \int \mathbbm{1}_{\widetilde{A}} \mathbbm{1}_{\widetilde{\xi'}}\circ \widetilde{F}^p d\mu_{\widetilde{\Delta}}-\int \mathbbm{1}_{\widetilde{A}}d\mu_{\widetilde{\Delta}} \int  \mathbbm{1}_{\widetilde{\xi'}}d\mu_{\widetilde{\Delta}}\big|
    \le C (p-m)^{1-\xi}\mu_{\Delta}(A),\] and for any $p>0$, 
    \[\big| \int \mathbbm{1}_{A} \mathbbm{1}_{\xi'}\circ F^p d\mu_{\Delta}-\int \mathbbm{1}_{A}d\mu_{\Delta} \int  \mathbbm{1}_{\xi'}d\mu_{\Delta}\big|
    \le C m^{\xi-1}p^{1-\xi}\mu_{\Delta}(A),\]
    \[\big| \int \mathbbm{1}_{\widetilde{A}} \mathbbm{1}_{\widetilde{\xi'}}\circ \widetilde{F}^p d\mu_{\widetilde{\Delta}}-\int \mathbbm{1}_{\widetilde{A}}d\mu_{\widetilde{\Delta}} \int  \mathbbm{1}_{\widetilde{\xi'}}d\mu_{\widetilde{\Delta}}\big|
    \le C m^{\xi-1}p^{1-\xi}\mu_{\Delta}(A),\]
\end{lemma}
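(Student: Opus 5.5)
We plan to derive all four estimates from Young's polynomial decay result for the quotient tower $(\widetilde{\Delta},\widetilde{F},\mu_{\widetilde{\Delta}})$ in \cite{Y1}, and then lift them to $\Delta$.

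\emph{Reduction to the quotient tower.} Every element of $\mathcal{Q}_k$ is a union of stable disks, since $\mathcal{Q}_1$ is defined through the $\Lambda_l$ and these are unions of $\gamma^s\in\Gamma^s$. Hence for $A\in\sigma(\mathcal{Q}_m)$ and $\xi'\in\sigma(\bigcup_i\mathcal{Q}_i)$ we have $\mathbbm{1}_A=\mathbbm{1}_{\widetilde A}\circ\widetilde\pi_\Delta$ and $\mathbbm{1}_{\xi'}=\mathbbm{1}_{\widetilde{\xi'}}\circ\widetilde\pi_\Delta$. Using $\widetilde F\circ\widetilde\pi_\Delta=\widetilde\pi_\Delta\circ F$ and $(\widetilde\pi_\Delta)_*\mu_\Delta=\mu_{\widetilde\Delta}$, each integral over $\Delta$ equals the corresponding integral over $\widetilde\Delta$, and $\mu_\Delta(A)=\mu_{\widetilde\Delta}(\widetilde A)$. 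So it suffices to prove the two quotient estimates.

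\emph{Main estimate on $\widetilde\Delta$.} We use the transfer operator $P$ of $\widetilde F$ with respect to $\mu_{\widetilde\Delta}$ and write the correlation as
\[
\int \big(P^{p}\mathbbm{1}_{\widetilde A}-\mu_{\widetilde\Delta}(\widetilde A)\big)\mathbbm{1}_{\widetilde{\xi'}}\,d\mu_{\widetilde\Delta},
\]
which is bounded by $\|P^p\mathbbm{1}_{\widetilde A}-\mu(\widetilde A)\|_{L^1}$. Since $\widetilde A$ is a union of $\widetilde{\mathcal{Q}}_m$-cylinders, bounded distortion (item 4) and the Markov property (item 3) imply that $\phi:=P^m\mathbbm{1}_{\widetilde A}$ is supported in the tower, has $\int\phi=\mu(\widetilde A)$, and is locally Lipschitz in the metric $\beta^{s(\cdot,\cdot)}$. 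Its Lipschitz seminorm is bounded by $C$ times its sup on each cylinder, and the resulting constant is uniform in $m$. This is the standard Young-tower computation in \cite{Y1}.

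Young's coupling argument then gives $\|P^{n}\phi-\mu(\phi)\|_{L^1}\le C\,\|\phi\|_{\mathrm{Lip}}\, n^{1-\xi}$ for densities with finite moment, when $\int R_p^\xi<\infty$. The required normalization is proportional to $\mu(\widetilde A)$: the density $\phi$ has $L^1$ mass $\mu(\widetilde A)$, and the coupling bound is linear and scales by mass once the distortion constant is uniform. We take $n=p-m$ to obtain the first pair of estimates.

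\emph{Main obstacle.} The hardest step is obtaining the linear dependence on $\mu(\widetilde A)$ with a constant independent of $m$. The density $P^m\mathbbm{1}_{\widetilde A}$ may sit on high floors and be large there, so its sup norm need not be controlled by its mass. We would first try to control the coupling by the tail $\mu(R_p\text{-tail})$ weighted by $\phi$. The idea is to push $\phi$ down to the base in at most the floor height, then apply the renewal and coupling estimate from \cite{Y1}. That estimate is uniform over normalized densities on the base with bounded log-Lipschitz constant. The floor shift is harmless because, after the base is reached, distortion makes the density log-Lipschitz with a uniform constant.

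\emph{Second pair of estimates.} For $p>2m$, the first estimate gives $(p-m)^{1-\xi}\le 2^{\xi-1}p^{1-\xi}\le 2^{\xi-1}m^{\xi-1}p^{1-\xi}$. For $p\le 2m$, the trivial bound $2\mu(\widetilde A)$ is at most $2\cdot 2^{\xi-1}m^{\xi-1}p^{1-\xi}\mu(\widetilde A)$, since $m/p\ge 1/2$. This yields the last two inequalities after enlarging $C$.
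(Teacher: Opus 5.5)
\textbf{The proposal has a genuine gap, and it cannot be closed: the statement is false in the form given.}

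The step that fails is the claim that the bound ``scales by mass'', i.e.\ that $\|P^{p-m}\phi-\mu(\widetilde A)\|_{L^1}\le C(p-m)^{1-\xi}\mu(\widetilde A)$ for $\phi=P^m\mathbbm{1}_{\widetilde A}$. The accompanying assertion that the floor shift is harmless fails for the same reason.

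Young's coupling bound for the probability density $\phi/\mu(\widetilde A)$ depends on two things: its log-Lipschitz constant, \emph{and} how much of its mass lies far from the base. The first is uniform, but the second is not controlled by $\mu(\widetilde A)$. The normalized density can be concentrated on a single column. It must then wait the \emph{remaining} height of that column before it reaches the base and mixing can start. That waiting time is neither ``at most the floor height'' nor bounded uniformly in $A$.

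The inequalities with the factor $\mu_\Delta(A)$ are false whenever $R_p$ is unbounded, which is the case in every slowly mixing example. Here is a counterexample:
\begin{itemize}
\item Take $m=1$, a column with $h:=R_p|_{\Lambda_l}\ge 3$, and $1<p<h$.
\item Let $A:=\Lambda_l\times\{0\}$ and $\xi':=\Lambda_l\times\{p\}$; both lie in $\mathcal{Q}_1$.
\item Since $F^{-1}(\Lambda_l\times\{j\})=\Lambda_l\times\{j-1\}$ for $1\le j<h$, we get $F^{-p}\xi'=A$.
\item Invariance gives $\mu_\Delta(\xi')=\mu_\Delta(A)\le 1/h$.
\end{itemize}
Hence
\[
\int\mathbbm{1}_A\,\mathbbm{1}_{\xi'}\circ F^p\,d\mu_\Delta-\mu_\Delta(A)\,\mu_\Delta(\xi')=\mu_\Delta(A)\big(1-\mu_\Delta(A)\big)\ge\tfrac12\,\mu_\Delta(A).
\]
So the first inequality would force $\tfrac12\le C(p-1)^{1-\xi}$, and the third would force $\tfrac12\le Cp^{1-\xi}$. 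Letting $p=h-1$ and $h\to\infty$ gives a contradiction. Since $A$ and $\xi'$ are unions of stable disks, the same sets refute the quotient versions.

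\textbf{What your argument does prove} is the version with $\mu_\Delta(A)$ replaced by $1$.
\begin{itemize}
\item We have $0\le P^m\mathbbm{1}_{\widetilde A}\le P^m1=1$, and bounded distortion gives a Lipschitz constant on $\widetilde{\mathcal{Q}}_1$-elements that is uniform in $m$ and $A$.
\item So $1+P^m\mathbbm{1}_{\widetilde A}$, once normalized, is a density bounded above and below with uniform log-Lipschitz constant.
\item Young's estimate applies to it with a uniform constant, which yields the bound $C(p-m)^{1-\xi}$.
\end{itemize}
Your reduction to $\widetilde\Delta$ and your last paragraph, passing to $Cm^{\xi-1}p^{1-\xi}$, are fine and carry over unchanged.

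A bound linear in $\mu_\Delta(A)$ does hold under an extra structural hypothesis. For example, it holds if $\widetilde A$ is a union of $m$-cylinders each of which $\widetilde F^m$ maps onto the whole base. Then $P^m\mathbbm{1}_{\widetilde A}/\mu(\widetilde A)$ is uniformly comparable to a constant on the base, and Young's coupling applies uniformly.

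The paper offers nothing beyond the citation of \cite{Y1}, and \cite{Y1} does not contain the $\mu_\Delta(A)$-linear form. Places where that form is genuinely used should therefore be rechecked against one of these corrected versions. One such place is the bound proportional to $\mu_\Delta(D)$ fed into Lemma \ref{usefulprob1} in the proof of Lemma \ref{approximateindependnt}.
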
where $\widetilde{A}:=\widetilde{\pi}_{\Delta}A$, $\widetilde{\xi}':=\widetilde{\pi}_{\Delta}\xi'$. \\

\textbf{Observables:} Fix a collection  $\mathcal{B}$ of (dis)connected domains $B\subseteq \mathcal{M}$, which consist of finitely many  components having finitely many piecewise smooth boundaries with bounded principal curvatures, meeting along finitely many edges or corners and having finite boundary measure $\Leb_{\partial B}(\partial B)<\infty$. The observables of  hyperbolic systems, which we consider throughout the paper, are  $\mathbbm{1}_{B}$ where $B \in \mathcal{B}$. A simple example of such $B \subseteq \mathbb{R}^2$ is a finite union of polygonal regions. 

In what follows we present our main results for slowly mixing hyperbolic systems with CMZ structures and observables \(1_B\) with \(B\in\mathcal B\).

\section{Decay of correlations}\label{decorrelation}

The decay of correlation (also referred to as the mixing rate) is a fundamental statistical property of chaotic dynamical systems. It quantifies the speed at which such a system loses memory of its initial conditions (see \cite{baladibook} for a comprehensive overview) and plays a crucial role in establishing various limit theorems. When the observables are regular, the decay of correlation for hyperbolic dynamical systems $(\mathcal{M},f, \mu_{\mathcal{M}})$ with CMZ structures  was proved in \cite{Y,hongkun,markarian,billiardwithvariousrates}. Now we prove such results for our class of irregular observables.

\begin{theorem}\label{mixingrate}
Suppose that a hyperbolic dynamical system $(\mathcal{M},f, \mu_{\mathcal{M}})$ with a CMZ structure as described in section \ref{cmz} satisfies additional assumptions $\frac{d\mu_{\mathcal{M}}}{d\Leb_{\mathcal{M}}}\in L^{\infty}$ and $\int R_p^{\xi} d\mu_{\Lambda} < \infty$ for some $\xi>1$. Then for any $B,B'\in \mathcal{B}$, there is a constant $p_{B,B'}>1$ such that for any $p \ge p_{B,B'}$, 
\begin{align*}
    \Big|\int&\mathbbm{1}_{B} \mathbbm{1}_{B'}\circ f^p d\mu_{\mathcal{M}}-\int\mathbbm{1}_{B}d\mu_{\mathcal{M}} \int \mathbbm{1}_{B'}d\mu_{\mathcal{M}}\Big|\\
    &\precsim  p^{1-\xi}\Leb_{\mathcal{M}}(B)+p^{-\alpha}\Leb_{\partial B}(\partial B)+ p^{-\alpha}\Leb_{\partial B'}(\partial B')
\end{align*} 
where the constant in $\precsim$ does not depend on $p$, $\Leb_{\partial B}(\partial B), \Leb_{\mathcal{M}}(B)$, $\Leb_{\mathcal{M}}(B')$ or $\Leb_{\partial B'}(\partial B')$. 

Alternatively, if the polynomial decay estimates in (\ref{0}) and Lemma \ref{youngmixing} become exponential decay estimates $e^{-\epsilon n}$ and $e^{-\epsilon_1 (p-m)}$, respectively, for some $\epsilon, \epsilon_1>0$, then
\begin{align*}
    \Big|\int&\mathbbm{1}_{B} \mathbbm{1}_{B'}\circ f^p d\mu_{\mathcal{M}}-\int\mathbbm{1}_{B}d\mu_{\mathcal{M}} \int \mathbbm{1}_{B'}d\mu_{\mathcal{M}}\Big|\\
    &\precsim  e^{-\epsilon_1 p/2}\Leb_{\mathcal{M}}(B)+e^{-\epsilon p/4}\Leb_{\partial B}(\partial B)+ e^{-\epsilon p/4}\Leb_{\partial B'}(\partial B')
\end{align*}where the constant in $\precsim$ does not depend on $p$, $\Leb_{\partial B}(\partial B), \Leb_{\mathcal{M}}(B)$, $\Leb_{\mathcal{M}}(B')$ or $\Leb_{\partial B'}(\partial B')$. 
\end{theorem}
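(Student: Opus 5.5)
Lift everything to the hyperbolic Young tower via $\pi$, so that $\int \mathbbm{1}_B\,\mathbbm{1}_{B'}\circ f^p\,d\mu_{\mathcal{M}}=\int \mathbbm{1}_{\pi^{-1}B}\,\mathbbm{1}_{\pi^{-1}B'}\circ F^p\,d\mu_\Delta$. Then replace the irregular sets $\pi^{-1}B$ and $\pi^{-1}B'$ by unions of partition elements, to which Lemma \ref{youngmixing} applies. Write $p=2m$, or more generally $p=m+q$ with $m\approx q\approx p/2$.

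\textbf{Approximating the first indicator.} Let $A$ be the union of all elements $Q\in\mathcal{Q}_{m}$ that meet $\pi^{-1}B$, and let $A_0$ be the union of those with $Q\subseteq\pi^{-1}B$. Then $A_0\subseteq \pi^{-1}B\subseteq A$. The difference $A\setminus A_0$ consists of elements $Q$ that meet $\pi^{-1}\partial B$.

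Each element of $\mathcal{Q}_m$ is a hyperbolic product set $\Lambda$-piece at some level. After applying $F^{\lfloor m/2\rfloor}$ (or projecting back and using (\ref{0}) forward along stable and backward along unstable disks) its image under $\pi\circ F^{j}$, for a suitable middle time $j\approx m/2$, has diameter $\precsim m^{-\alpha}$. Concretely: points of $Q$ share itinerary for $m$ steps, so by (\ref{0}) on stable disks and by the Markov structure on unstable disks, $\pi F^{j}Q$ lies in a $Cm^{-\alpha}$-neighbourhood.

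Hence $F^{j}(A\setminus A_0)$ is contained in $\pi^{-1}$ of the $Cm^{-\alpha}$-neighbourhood of $f^{j}\partial B$. The more convenient route is to shift the time origin: use invariance to rewrite the correlation as $\int \mathbbm{1}_{B}\circ f^{-j}\,\mathbbm{1}_{B'}\circ f^{p-j}$ and approximate $\mathbbm{1}_{B}$ at the time where the cylinders are small. The measure of this tube is then bounded using $d\mu_{\mathcal{M}}/d\Leb_{\mathcal{M}}\in L^\infty$ together with the tube formula for piecewise smooth boundaries with bounded curvature:
\[
\Leb_{\mathcal{M}}\bigl(\{x: d(x,\partial B)<\epsilon\}\bigr)\precsim \epsilon\,\Leb_{\partial B}(\partial B)
\]
for $\epsilon$ small. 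The smallness requirement on $\epsilon$ depends on the curvature and corners of $B$, and this is the origin of the threshold $p\ge p_{B,B'}$. This gives $\mu_\Delta(A\setminus A_0)\precsim p^{-\alpha}\Leb_{\partial B}(\partial B)$.

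\textbf{Approximating the second indicator.} $\pi^{-1}B'$ is approximated analogously by a set $\xi'\in\sigma(\bigcup_i\mathcal{Q}_i)$ built from elements of $\mathcal{Q}_{m'}$. The error is $\mu(\xi'\triangle \pi^{-1}B')\precsim p^{-\alpha}\Leb_{\partial B'}(\partial B')$, obtained from the same tube estimate and $F$-invariance of $\mu_\Delta$.

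\textbf{Assembling the estimate.} By the triangle inequality, the correlation of the indicators differs from that of $\mathbbm{1}_A$ and $\mathbbm{1}_{\xi'}$ by at most the sum of the two tube errors; the product of means changes by the same amounts. Lemma \ref{youngmixing} with $p>m$ gives
\[
\Bigl|\int\mathbbm{1}_A\,\mathbbm{1}_{\xi'}\circ F^p-\mu(A)\mu(\xi')\Bigr|\le C(p-m)^{1-\xi}\mu_\Delta(A)\precsim p^{1-\xi}\bigl(\mu(\pi^{-1}B)+p^{-\alpha}\Leb_{\partial B}(\partial B)\bigr).
\]
Finally, $\mu_\Delta(\pi^{-1}B)=\mu_{\mathcal{M}}(B)\precsim\Leb_{\mathcal{M}}(B)$. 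In the exponential case the same bookkeeping with $m=p/2$ and cylinder diameter $e^{-\epsilon m/2}$ yields the rates $e^{-\epsilon_1p/2}$ and $e^{-\epsilon p/4}$.

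\textbf{Main obstacle.} The hardest step is the geometric claim that the projected $\mathcal{Q}_m$-cylinders (after the time shift) have diameter $\precsim m^{-\alpha}$ in both stable and unstable directions. Controlling the unstable direction requires running the argument backwards: cylinders of $\mathcal{Q}_m$ are unions of stable disks, so one uses (\ref{0}) for $f^{-n}$ on the unstable disks of $F^{m}Q$, combined with the product structure. I would first try to prove it for $\widetilde{\mathcal{Q}}_m$ pieces using the full branch Markov property, and then lift to $\Delta$.
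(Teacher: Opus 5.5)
Your proposal is essentially the paper's proof. The paper rewrites the correlation as $\int \mathbbm{1}_B\circ\pi\circ F^{m}\cdot\mathbbm{1}_{B'}\circ\pi\circ F^{m+p}\,d\mu_\Delta$, approximates both $F^{-m}\pi^{-1}B$ and $F^{-m}\pi^{-1}B'$ by unions of $\mathcal{Q}_{2m}$-cylinders, bounds the boundary cylinders by $\mu_{\mathcal{M}}(\mathcal{N}_{Cm^{-\alpha}}(\partial B))$ via $\diam \pi F^{m}Q\le Cm^{-\alpha}$, applies Lemma~\ref{youngmixing} at depth $2m$, and takes $m=\lfloor p/4\rfloor$; this is exactly your bookkeeping with $j\approx p/4$ and cylinder depth $\approx p/2$.

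One correction is needed. The invariance shift must go forward, $\int \mathbbm{1}_B\circ f^{j}\,\mathbbm{1}_{B'}\circ f^{p+j}\,d\mu_{\mathcal{M}}$, not $f^{-j}, f^{p-j}$ as you wrote; your version would have you approximate $f^{j}B$, whose boundary $f^{j}\partial B$ has no length control. The same shift applies to $B'$, so the error to control is $\mu_\Delta(\xi'\triangle F^{-j}\pi^{-1}B')$ rather than $\mu_\Delta(\xi'\triangle\pi^{-1}B')$: sets in $\sigma(\bigcup_i\mathcal{Q}_i)$ are unions of stable leaves and cannot approximate $\pi^{-1}B'$ itself.
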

\begin{proof}
 Similar to the approach developed in \cite{poisson, bbb, mldp, pene}, we will make use of the partitions of $\Delta$. By $\mu_{\mathcal{M}}=\pi_*\mu_{\Delta}$ and the invariance of $F$ (i.e., $F_{*}\mu_{\Delta}=\mu_{\Delta}$) we have
\begin{align*}
\int \mathbbm{1}_{B} \mathbbm{1}_{B'} \circ f^p  d\mu_{\mathcal{M}} =\int \mathbbm{1}_{B} \circ \pi \circ F^{m} \mathbbm{1}_{B'} \circ \pi \circ  F^{m}\circ F^p d\mu_{\Delta}. 
\end{align*}

Let 
 $A_1:=F^{-m} \pi^{-1} B$, $A_0:=\bigsqcup_{Q \in \mathcal{Q}_{2m}:Q \bigcap A_1\neq \emptyset}Q$ and $A_2:=\bigsqcup_{Q \in \mathcal{Q}_{2m}:Q \bigcap (A_0\setminus A_1)\neq \emptyset}Q$, $A'_1:=F^{-m} \pi^{-1} B'$, $A_0':=\bigsqcup_{Q \in \mathcal{Q}_{2m}:Q \bigcap A_1'\neq \emptyset}Q$ and $A_2':=\bigsqcup_{Q \in \mathcal{Q}_{2m}:Q \bigcap (A_0'\setminus A_1')\neq \emptyset}Q$. Then $A_1 \bigcup A_2=A_0$, $A_1' \bigcup A_2'=A_0'$. The sets $A_0$, $A_2$, $A_0'$, $A_2'$ are all $\sigma(\bigcup_{k\ge 0}\mathcal{Q}_k)$-measurable. Therefore we have
 \begin{align*}
     \int \mathbbm{1}_{B} \mathbbm{1}_{B'} \circ f^p  d\mu_{\mathcal{M}} =\int \mathbbm{1}_{A_1}  \mathbbm{1}_{A_1'}\circ F^p d\mu_{\Delta}=\int \mathbbm{1}_{A_0}  \mathbbm{1}_{A_0'} \circ F^p d\mu_{\Delta}+O(\mu_{\Delta}(A_2))+O(\mu_{\Delta}(A_2')),
 \end{align*} 
 \[\int\mathbbm{1}_{B}d\mu_{\mathcal{M}} \int \mathbbm{1}_{B'}d\mu_{\mathcal{M}}=\int \mathbbm{1}_{A_0} d\mu_{\Delta} \int \mathbbm{1}_{A_0'}  d\mu_{\Delta}+O(\mu_{\Delta}(A_2))+O(\mu_{\Delta}(A_2')).\] 
 
 Hence, by Lemma \ref{youngmixing}, when $p> 2m$, we have
\begin{align*}
   &\Big|\int\mathbbm{1}_{B} \mathbbm{1}_{B'}\circ f^p d\mu_{\mathcal{M}}-\int\mathbbm{1}_{B}d\mu_{\mathcal{M}} \int \mathbbm{1}_{B'}d\mu_{\mathcal{M}}\Big|\\
    &\le \left|\int \mathbbm{1}_{A_0}  \mathbbm{1}_{A_0'} \circ F^p d\mu_{\Delta}-\mu_{\Delta}(A_0) \mu_{\Delta}(A_0')\right|+O(\mu_{\Delta}(A_2))+O(\mu_{\Delta}(A_2'))\\
   &\precsim (p-2m)^{1-\xi}\mu_{\Delta}(A_0)+O(\mu_{\Delta}(A_2))+O(\mu_{\Delta}(A_2')).
\end{align*}

Before proceeding with the estimate, we need the following estimates for $B,B'$, which can also be found in \cite{pene,poisson}. 

\begin{lemma}\label{boundarymeasure}
    Let $\mathcal{N}_r(A)$ be an $r$-neighborhood of a set $A$. Then for any sufficiently large $m$, $\mu_{\Delta}(A_2)\le \mu_{\mathcal{M}}\left(\mathcal{N}_{Cm^{-\alpha}}(\partial B)\right)\precsim m^{-\alpha} \Leb_{\partial B}(\partial B), \mu_{\Delta}(A_0)\le \mu_{\mathcal{M}}\left(\mathcal{N}_{Cm^{-\alpha}}(B)\right)$, $\mu_{\Delta}(A_2')\le \mu_{\mathcal{M}}\left(\mathcal{N}_{Cm^{-\alpha}}(\partial B')\right)\precsim m^{-\alpha} \Leb_{\partial B'}(\partial B')$, $\mu_{\Delta}(A_0')\le \mu_{\mathcal{M}}\left(\mathcal{N}_{Cm^{-\alpha}}(B')\right)$ where the constants in ``$\precsim$" depend on the principal curvatures of $\partial B,\partial B'$ and $C$ but do not depend on $m, \Leb_{\partial B}(\partial B), \Leb_{\partial B'}(\partial B')$. 
\end{lemma}
 
\begin{proof}
    Observe that $\mu_{\Delta}(A_2)\le \mu_{\Delta}\left(F^{-m}\pi^{-1}\pi F^m A_2\right)=\mu_{\mathcal{M}}\left(\pi F^m A_2\right)$. Since $A_2:=\bigsqcup_{Q \in \mathcal{Q}_{2m}:Q \bigcap (A_0\setminus A_1)\neq \emptyset}Q$, for each $Q$ contained in $A_2$, there exist $x_1, x_2 \in Q $, such that $\pi(F^mx_1)\in B, \pi(F^mx_2)\notin B$, and (\ref{0}) implies $\diam \pi(F^m Q) \le C m^{-\alpha}$. So $\pi(F^m Q) \subseteq \mathcal{N}_{Cm^{-\alpha}}(\partial B)$. Hence $\pi(F^m A_2) \subseteq \mathcal{N}_{Cm^{-\alpha}}(\partial B)$. The same arguments can be applied  to $A_2', A_0,A_0'$.
\end{proof} 

We continue the estimate by taking $m=\lfloor p/4 \rfloor$ for sufficiently large $p$ and applying Lemma \ref{boundarymeasure}.  
\begin{align*}
    & \precsim p^{1-\xi}[\mu_{\mathcal{M}}(B)+\mu_{\mathcal{M}}(\mathcal{N}_{Cm^{-\alpha}}(\partial B))]+O(\mu_{\mathcal{M}}(\mathcal{N}_{Cm^{-\alpha}}(\partial B)))+O(\mu_{\mathcal{M}}(\mathcal{N}_{Cm^{-\alpha}}(\partial B')))\\
    & \precsim p^{1-\xi}[\Leb_{\mathcal{M}}(B)+p^{-\alpha}\Leb_{\partial B}(\partial B)]+p^{-\alpha}\Leb_{\partial B}(\partial B)+ p^{-\alpha}\Leb_{\partial B'}(\partial B')\\
    &\precsim  p^{1-\xi}\Leb_{\mathcal{M}}(B)+p^{-\alpha}\Leb_{\partial B}(\partial B)+ p^{-\alpha}\Leb_{\partial B'}(\partial B').
    \end{align*} 
    
    Also note that $\big|\int \mathbbm{1}_{B} \mathbbm{1}_{B'}\circ f^p d\mu_{\mathcal{M}}-\int\mathbbm{1}_{B}d\mu_{\mathcal{M}} \int \mathbbm{1}_{B'}d\mu_{\mathcal{M}}\big|=O(\Leb_{\mathcal{M}}(B))$ for any $p>0$. So we conclude the proof of the theorem in the polynomial mixing case for all $p\ge 1$. 
    
    For the exponential mixing case, the argument is the same, but one replaces $m^{-\alpha},p^{-\alpha},p^{1-\xi}, (p-2m)^{1-\xi}$ with $e^{-\epsilon m}, e^{-\epsilon p/4}, e^{-\epsilon_1 p/2}, e^{-\epsilon_1 (p-2m)}$ respectively.
\end{proof}

\begin{remark}
    It is well-known (see e.g. \cite{nonholder}) that even for the uniformly hyperbolic systems with the strongest random (``chaotic") properties, there exist observables for which the decay of correlation is non-exponential. Our Theorem \ref{mixingrate} shows that the correlation for hyperbolic systems still decays exponentially for our class of indicator observables.
\end{remark}

\section{Maximal large deviations}
Large deviations and maximal large deviations have many important applications to various core problems in chaotic dynamical systems (e.g., see \cite{mldp} for more details). When the observables are regular, \cite{melbourne09,nicolldp} proved a polynomial large deviation principle and \cite{mldp} proved a maximal large deviation principle for hyperbolic dynamical systems $(\mathcal{M},f, \mu_{\mathcal{M}})$ with CMZ structures. Now we prove such results for our class of irregular observables. 
\begin{assumption}\label{A} 
There exists a sequence $D_\ell \in \mathcal{B}$ such that $
\lim_{\ell \to \infty}\Leb_{\mathcal{M}}(D_\ell)= 0$,
and, for every fixed $\ell$, there exist constants $c_\ell > 0$ and $n_\ell \ge 1$ such that
\begin{gather*}
\mu_{\mathcal{M}}\left(\bigcap_{j=0}^{n-1} f^{j}D_\ell\right)= \mu_{\mathcal{M}}\left(\bigcap_{j=0}^{n-1} f^{-j}D_\ell\right)
\ge c_\ell n^{1-\xi} \text{ for any } n \ge n_\ell,\\
C \cup D_{\ell}, C \setminus D_{\ell} \in \mathcal{B} \text{ for any }C \in \mathcal{B}.
\end{gather*} 
\end{assumption}

Now we can state the main theorem.
\begin{theorem}[Maximal large deviations]\label{mldp} \par
Suppose that a hyperbolic dynamical system $(\mathcal{M},f, \mu_{\mathcal{M}})$ with a CMZ structure described in section \ref{cmz} satisfies additional assumptions $\frac{d\mu_{\mathcal{M}}}{d\Leb_{\mathcal{M}}}\in L^{\infty}$ and $\mu_{\mathcal{M}}|_X(R>n)\approx n^{-\xi}$ for some $\xi>1$. Then for any  $B \in \mathcal{B}$ with $\Leb_{\mathcal{M}}(B)\in (0, \Leb_{\mathcal{M}}(\mathcal{M}))$, 
\begin{enumerate}
    \item the maximal large deviations for the observable $\mathbbm{1}_{B}$ hold, that is, for any $\epsilon>0$, \[\liminf_{N\to \infty }\frac{
\log \mu_{\mathcal{M}}\Big(\sup_{n\ge N}\Big| \frac{\sum_{0\le i\le n-1}\mathbbm{1}_B\circ f^i}{n}-\mu_{\mathcal{M}}(B)\Big|\ge \epsilon \Big)}{-\log N}\ge \xi-1.\] 

\item Endow $\mathcal{A}:=\{B \in  \mathcal{B}:\Leb_{\mathcal{M}}(B)\in (0, \Leb_{\mathcal{M}}(\mathcal{M}))\}$ with the $L^1$-metric, that is, the $L^1$-distance of two sets $B_1,B_2 \in \mathcal{A}$ is $|\mathbbm{1}_{B_1}-\mathbbm{1}_{B_2}|_1$. If Assumption \ref{A} holds, then there is a dense set of $B\in \mathcal{A}$ such that there is $\epsilon_0>0$, for any $\epsilon \in (0,\epsilon_0)$, 
\begin{gather}\label{1}
    \liminf_{n \to \infty}\frac{\log \mu_{\mathcal{M}}\Big(\Big| \frac{\sum_{0\le i\le n-1}\mathbbm{1}_B\circ f^i}{n}-\mu_{\mathcal{M}}(B)\Big|\ge \epsilon \Big)}{-\log n}\le  \xi-1.
\end{gather}

Therefore, there is a dense set of $B\in \mathcal{A}$ such that there is $\epsilon_0>0$, for any $\epsilon \in (0,\epsilon_0)$, 
\[\liminf_{N \to \infty}\frac{\log \mu_{\mathcal{M}}\Big(\sup_{n\ge N}\Big| \frac{\sum_{0\le i\le n-1}\mathbbm{1}_B\circ f^i}{n}-\mu_{\mathcal{M}}(B)\Big|\ge \epsilon \Big)}{-\log N}=\xi-1.\]
\end{enumerate}
\end{theorem}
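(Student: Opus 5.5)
For part (1) I plan to adapt the maximal large deviation argument of \cite{mldp}, which handles Hölder observables, by swapping in the regularity of $\mathbbm{1}_B$ at scale $m^{-\alpha}$ exactly as in the proof of Theorem \ref{mixingrate}. The assumption $\mu_{\mathcal{M}}|_X(R>n)\approx n^{-\xi}$ gives $\int_X R^{\xi-\epsilon}d\mu_{\mathcal{M}}<\infty$ for every small $\epsilon>0$. By Remark \ref{importantremark}, this yields $\int R_p^{\xi-2\epsilon}d\mu_\Lambda<\infty$, so Lemma \ref{youngmixing} and Theorem \ref{mixingrate} are available with exponent $\xi-2\epsilon$. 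Since $\epsilon$ is arbitrary and we only need a $\liminf$ bound of order $\xi-1$, the loss of $\epsilon$ is harmless.

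The key step is an approximation. For a level $m$ (to be chosen as a small power of $N$), I replace $\mathbbm{1}_B\circ\pi$ on $\Delta$ by $\phi_m:=\mathbbm{1}_{A_0}\circ F^{m}$. Here $A_0$ is the $\mathcal{Q}_{2m}$-measurable hull from the proof of Theorem \ref{mixingrate}. The function $\phi_m$ differs from $\mathbbm{1}_B\circ \pi$ only on the set $F^{-m}A_2$, which by Lemma \ref{boundarymeasure} has measure $\precsim m^{-\alpha}\Leb_{\partial B}(\partial B)$. Since $\phi_m$ depends only on finitely many $\widetilde{\mathcal{Q}}_1$-coordinates, it factors through the quotient tower $\widetilde\Delta$ up to a shift. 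The maximal inequality of \cite{mldp}, a Burkholder/martingale-type bound for the induced Gibbs–Markov map with polynomial tails, then gives
\[\mu\Big(\sup_{n\ge N}\Big|\tfrac1n\textstyle\sum_{i<n}\phi_m\circ F^i-\int\phi_m\Big|\ge \epsilon/3\Big)\precsim_{\epsilon} m^{C}N^{-(\xi-1)+\epsilon}.\]
The polynomial dependence on $m$ enters through the number of cylinders, or equivalently through the martingale decomposition constants.

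The error term $\sup_{n\ge N}\frac1n\sum_{i<n}\mathbbm{1}_{F^{-m}A_2}\circ F^i$ is handled by applying the same maximal inequality to the $\sigma(\mathcal{Q}_{2m})$-measurable function $\mathbbm{1}_{A_2}$. Its mean is $\precsim m^{-\alpha}<\epsilon/3$ once $m$ is large, so the error exceeds $\epsilon/3$ only with probability $\precsim m^C N^{1-\xi+\epsilon}$. Taking $m=N^{\delta}$ with $\delta$ small then gives part (1).

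\textbf{Main obstacle.} I expect the hard step to be making the $m$-dependence of the constants explicit and polynomial; the alternative would be to fix $m$ depending only on $\epsilon$. In fact I will first try fixing $m=m(\epsilon,B)$ so that $m^{-\alpha}\Leb_{\partial B}(\partial B)<\epsilon/3$. Then all constants are independent of $N$, which avoids the issue entirely.

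For part (2), the plan is to perturb a given $B\in\mathcal{A}$ to $B_\ell:=B\cup D_\ell$ or $B\setminus D_\ell$. These sets lie in $\mathcal{B}$ by Assumption \ref{A}, and they are $L^1$-close to $B$ because $\Leb(D_\ell)\to0$. Choose $B_\ell=B\cup D_\ell$ when $\mu(B)$ is not close to $1$, so that $\mu(B_\ell)<1-2\epsilon_0$ for some $\epsilon_0>0$. On the set $\bigcap_{j<n}f^{-j}D_\ell$ every orbit stays in $B_\ell$, so the ergodic average equals $1$ and the deviation is at least $1-\mu(B_\ell)>\epsilon_0$. This gives
\[\mu_{\mathcal{M}}\big(|S_n/n-\mu(B_\ell)|\ge\epsilon\big)\ge c_\ell n^{1-\xi}\quad\text{for }\epsilon<\epsilon_0,\ n\ge n_\ell,\]
which is (\ref{1}). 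Otherwise use $B\setminus D_\ell$, whose ergodic average is $0$ on the same set; since $\Leb(B)>0$, $\mu(B\setminus D_\ell)$ is bounded below for large $\ell$. Finally, combining (\ref{1}) with part (1), and noting that the probability of the supremum is at least the probability for the single time $n=N$, gives equality on this dense set.
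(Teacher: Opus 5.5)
Your proposal is correct and is essentially the paper's proof. Like the paper, you approximate $\mathbbm{1}_{F^{-k}\pi^{-1}B}$ on $\Delta$ by $\sigma(\mathcal{Q}_{2k})$-measurable sets with discrepancy $\precsim k^{-\alpha}\Leb_{\partial B}(\partial B)$ (Lemma \ref{boundarymeasure}), apply Theorem 1 of \cite{mldp} on $\widetilde{\Delta}$ using the decay $\int|P^n(\mathbbm{1}_{\widetilde{A}}-\mu_{\widetilde{\Delta}}(\widetilde{A}))|\,d\mu_{\widetilde{\Delta}}\precsim k^{\xi-1-\eta}n^{1-\xi+\eta}$ from Lemma \ref{youngmixing} (which also shows the $k$-dependence is polynomial), and for part (2) pass to $B\cup D_\ell$ or $B\setminus D_\ell$. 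The paper uses an inner/outer sandwich with $k=\lfloor N^{\eta'}\rfloor$, while you use the outer hull plus a separate bound on $\mathbbm{1}_{A_2}$ with $m$ fixed by $\epsilon$ and $B$; both work, but correct the shift by comparing $\mathbbm{1}_{A_0}$ with $\mathbbm{1}_{A_1}=\mathbbm{1}_{\pi^{-1}B}\circ F^m$ and then using $F$-invariance, rather than comparing $\mathbbm{1}_{A_0}\circ F^m$ with $\mathbbm{1}_B\circ\pi$.
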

\begin{remark}
Unlike the $L^{\infty}$-dense set in \cite{melbourne09, nicolldp}, this $L^1$-dense set is not open.
\end{remark}

\begin{proof}[Proof of Theorem \ref{mldp} (lower bounds)]  The proof will make use of the Young tower. Let $k\ge 1$ (to be determined), lift the deviation to $\Delta$ and use $(F^k)_*\mu_{\Delta}=\mu_{\Delta}$, that is, 
    \begin{align*}
        &\mu_{\mathcal{M}}\Big(\sup_{n\ge N}\Big| \frac{\sum_{0\le i\le n-1}\mathbbm{1}_B\circ f^i}{n}-\mu_{\mathcal{M}}(B)\Big|\ge \epsilon \Big)\\
        &=\mu_{\Delta}\Big(\sup_{n\ge N}\Big| \frac{\sum_{0\le i\le n-1}\mathbbm{1}_{\pi^{-1}B}\circ F^i}{n}-\mu_{\Delta}(\pi^{-1}B)\Big|\ge \epsilon \Big)\\
        &=\mu_{\Delta}\Big(\sup_{n\ge N}\Big| \frac{\sum_{0\le i\le n-1}\mathbbm{1}_{F^{-k}\pi^{-1}B}\circ F^i}{n}-\mu_{\Delta}(F^{-k}\pi^{-1}B)\Big|\ge \epsilon \Big).
    \end{align*}

    Cover $F^{-k}\pi^{-1}B$ with a family of  $\xi_{2k,i} \in \mathcal{Q}_{2k}$. Collect such $\xi_{2k,i}$ which is completely contained in $F^{-k}\pi^{-1}B$, denote it by $\mathring{\xi}_{2k,i}$. Collect the $\xi_{2k,i}$ which is not completely contained in $F^{-k}\pi^{-1}B$, denote it by $\partial \xi_{2k,i}$ and let $A_2:=\bigcup_i \partial \xi_{2k,i}$. Then $\bigcup_i \mathring{\xi}_{2k,i} \subseteq  F^{-k}\pi^{-1}B \subseteq \bigcup_i \xi_{2k,i}$ and $\mu_{\Delta}(A_2)\precsim k^{-\alpha} \Leb_{\partial B}(\partial B)$ according to Lemma \ref{boundarymeasure}. Also note that \[\frac{\sum_{0\le i\le n-1}\mathbbm{1}_{\bigcup_i \mathring{\xi}_{2k,i}}\circ F^i}{n}\le \frac{\sum_{0\le i\le n-1}\mathbbm{1}_{F^{-k}\pi^{-1}B}\circ F^i}{n} \le \frac{\sum_{0\le i\le n-1}\mathbbm{1}_{\bigcup_i \xi_{2k,i}}\circ F^i}{n},\]
    then when $k \gg 1$, $\epsilon -\mu_{\Delta}(A_2)>\epsilon/2$ and \begin{align}
        &\mu_{\Delta}\Big(\sup_{n\ge N}\Big| \frac{\sum_{0\le i\le n-1}\mathbbm{1}_{F^{-k}\pi^{-1}B}\circ F^i}{n}-\mu_{\Delta}(F^{-k}\pi^{-1}B)\Big|\ge \epsilon \Big) \nonumber\\
        &\le \mu_{\Delta}\Big(\sup_{n\ge N}\Big| \frac{\sum_{0\le i\le n-1}\mathbbm{1}_{\bigcup_i \mathring{\xi}_{2k,i}}\circ F^i}{n}-\mu_{\Delta}(F^{-k}\pi^{-1}B)\Big|\ge \epsilon \Big) \nonumber\\
        &\quad +\mu_{\Delta}\Big(\sup_{n\ge N}\Big| \frac{\sum_{0\le i\le n-1}\mathbbm{1}_{\bigcup_i \xi_{2k,i}}\circ F^i}{n}-\mu_{\Delta}(F^{-k}\pi^{-1}B)\Big|\ge \epsilon \Big) \nonumber\\
        &\le \mu_{\Delta}\Big(\sup_{n\ge N}\Big| \frac{\sum_{0\le i\le n-1}\mathbbm{1}_{\bigcup_i \mathring{\xi}_{2k,i}}\circ F^i}{n}-\mu_{\Delta}(\bigcup_i \mathring{\xi}_{2k,i})\Big|\ge \epsilon -\mu_{\Delta}(A_2)\Big) \nonumber\\
        &\quad +\mu_{\Delta}\Big(\sup_{n\ge N}\Big| \frac{\sum_{0\le i\le n-1}\mathbbm{1}_{\bigcup_i \xi_{2k,i}}\circ F^i}{n}-\mu_{\Delta}(\bigcup_i \xi_{2k,i})\Big|\ge \epsilon -\mu_{\Delta}(A_2)\Big)\nonumber\\
        &\le \mu_{\Delta}\Big(\sup_{n\ge N}\Big| \frac{\sum_{0\le i\le n-1}\mathbbm{1}_{\bigcup_i \mathring{\xi}_{2k,i}}\circ F^i}{n}-\mu_{\Delta}(\bigcup_i \mathring{\xi}_{2k,i})\Big|\ge \epsilon/2\Big) \nonumber\\
        &\quad +\mu_{\Delta}\Big(\sup_{n\ge N}\Big| \frac{\sum_{0\le i\le n-1}\mathbbm{1}_{\bigcup_i \xi_{2k,i}}\circ F^i}{n}-\mu_{\Delta}(\bigcup_i \xi_{2k,i})\Big|\ge \epsilon/2\Big) \label{2}.
    \end{align}

    To proceed with the proof, we will verify the conditions of Theorem 1 in \cite{mldp}, that is, if we denote the transfer operator of $\widetilde{F}: \widetilde{\Delta}\to \widetilde{\Delta}$ by $P$ and apply Lemma \ref{youngmixing} and Remark \ref{importantremark} to the observables $\mathbbm{1}_{\bigcup_i \widetilde{\xi_{2k,i}}}-\mu_{\widetilde{\Delta}}(\bigcup_i \widetilde{\xi_{2k,i}})$ and $\mathbbm{1}_{\bigcup_i \widetilde{\mathring{\xi}_{2k,i}}}-\mu_{\widetilde{\Delta}}(\bigcup_i \widetilde{\mathring{\xi}_{2k,i}})$, then for any small $\eta>0$, any $n> 4k$,
    \[\int \Big|P^n\Big(\mathbbm{1}_{\bigcup_i \widetilde{\mathring{\xi}_{2k,i}}}-\mu_{\widetilde{\Delta}}(\bigcup_i \widetilde{\mathring{\xi}_{2k,i}})\Big)\Big|d\mu_{\widetilde{\Delta}}\precsim_{\eta} (n-2k)^{-\xi+1+\eta}\precsim_{\eta,\xi} n^{-\xi+1+\eta},\]
     \[\int \Big|P^n\Big(\mathbbm{1}_{\bigcup_i \widetilde{\xi_{2k,i}}}-\mu_{\widetilde{\Delta}}(\bigcup_i \widetilde{\xi_{2k,i}})\Big)\Big|d\mu_{\widetilde{\Delta}}\precsim_{\eta} (n-2k)^{-\xi+1+\eta}\precsim_{\eta,\xi} n^{-\xi+1+\eta}.\]

     On the other hand, $\int |P^n(\mathbbm{1}_{\bigcup_i \widetilde{\xi_{2k,i}}}-\mu_{\widetilde{\Delta}}(\bigcup_i \widetilde{\xi_{2k,i}}))|d\mu_{\widetilde{\Delta}}=O(1), \int |P^n(\mathbbm{1}_{\bigcup_i \widetilde{\mathring{\xi}_{2k,i}}}-\mu_{\widetilde{\Delta}}(\bigcup_i \widetilde{\mathring{\xi}_{2k,i}}))|d\mu_{\widetilde{\Delta}}=O(1)$ for any $n\le 4k$. Hence, for any $n>0$,
    \[\int \Big|P^n\Big(\mathbbm{1}_{\bigcup_i \widetilde{\mathring{\xi}_{2k,i}}}-\mu_{\widetilde{\Delta}}(\bigcup_i \widetilde{\mathring{\xi}_{2k,i}})\Big)\Big|d\mu_{\widetilde{\Delta}}\precsim_{\eta,\xi} k^{\xi-1-\eta}n^{-\xi+1+\eta},\]
     \[\int \Big|P^n\Big(\mathbbm{1}_{\bigcup_i \widetilde{\xi_{2k,i}}}-\mu_{\widetilde{\Delta}}(\bigcup_i \widetilde{\xi_{2k,i}})\Big)\Big|d\mu_{\widetilde{\Delta}}\precsim_{\eta,\xi} k^{\xi-1-\eta}n^{-\xi+1+\eta}.\]

     So the conditions of Theorem 1 in \cite{mldp} are verified, and we can continue the estimate of (\ref{2}).
     \begin{align*}
         & \le \mu_{\Delta}\Big(\sup_{n\ge N}\Big| \frac{\sum_{0\le i\le n-1}\mathbbm{1}_{\bigcup_i \mathring{\xi}_{2k,i}}\circ F^i}{n}-\mu_{\Delta}(\bigcup_i \mathring{\xi}_{2k,i})\Big|\ge \epsilon/2\Big) \\
         &\quad +\mu_{\Delta}\Big(\sup_{n\ge N}\Big| \frac{\sum_{0\le i\le n-1}\mathbbm{1}_{\bigcup_i \xi_{2k,i}}\circ F^i}{n}-\mu_{\Delta}(\bigcup_i \xi_{2k,i})\Big|\ge \epsilon/2\Big)\\
         & = \mu_{\widetilde{\Delta}}\Big(\sup_{n\ge N}\Big| \frac{\sum_{0\le i\le n-1}\mathbbm{1}_{\bigcup_i \widetilde{\mathring{\xi}_{2k,i}}}\circ \widetilde{F}^i}{n}-\mu_{\widetilde{\Delta}}(\bigcup_i \widetilde{\mathring{\xi}_{2k,i}})\Big|\ge \epsilon/2\Big) \\
         &\quad +\mu_{\widetilde{\Delta}}\Big(\sup_{n\ge N}\Big| \frac{\sum_{0\le i\le n-1}\mathbbm{1}_{\bigcup_i \widetilde{\xi_{2k,i}}}\circ \widetilde{F}^i}{n}-\mu_{\widetilde{\Delta}}(\bigcup_i \widetilde{\xi_{2k,i}})\Big|\ge \epsilon/2\Big)\precsim k^{\xi-1-\eta}N^{-\xi+1+\eta}.
     \end{align*}

     Choose $k=\lfloor N^{\eta'}\rfloor$ with small enough $\eta'>0$, then \begin{align*}
         \mu_{\Delta}\Big(\sup_{n\ge N}\Big| \frac{\sum_{0\le i\le n-1}\mathbbm{1}_{F^{-k}\pi^{-1}B}\circ F^i}{n}-\mu_{\Delta}(F^{-k}\pi^{-1}B)\Big|\ge \epsilon \Big) \precsim N^{(-\xi+1+\eta)(1-\eta')}.
     \end{align*}

     In other words, \begin{align*}
         \liminf_{N\to \infty }\frac{
\log \mu_{\mathcal{M}}\Big(\sup_{n\ge N}\Big| \frac{\sum_{0\le i\le n-1}\mathbbm{1}_B\circ f^i}{n}-\mu_{\mathcal{M}}(B)\Big|\ge \epsilon \Big)}{-\log N}\ge (\xi-1-\eta)(1-\eta')
     \end{align*}holds for any small $\eta,\eta'>0$. Hence we conclude the proof for lower bounds by letting $\eta,\eta' \to 0$.
\end{proof}

\begin{proof}[Proof of Theorem \ref{mldp} (upper bounds)]
If (\ref{1}) fails for $B \in \mathcal{A}$, choose $\ell$ so large that $\mu_{\mathcal{M}}(D_\ell) < 1/4$ and $D_\ell$ is arbitrarily small in $L^1$. Define
\[
C =
\begin{cases}
B \cup D_\ell, & \mu_{\mathcal{M}}(B) \le \frac{1}{2}, \\[2mm]
B \setminus D_\ell, & \mu_{\mathcal{M}}(B) > \frac{1}{2}.
\end{cases}
\]
and $C \in \mathcal{A}$. If $\mu_{\mathcal{M}}(B) \le 1/2$, then on $\bigcap_{j=0}^{n-1} f^{-j}D_\ell$, 
$\frac{1}{n} \sum_{j=0}^{n-1} \mathbbm{1}_C \circ f^j = 1$,
and $
1 - \mu_{\mathcal{M}}(C)
\ge 1 - \mu_{\mathcal{M}}(B) - \mu_{\mathcal{M}}(D_\ell)
> 1/4$. If $\mu_{\mathcal{M}}(B) > 1/2$, then on the same set, $\frac{1}{n} \sum_{j=0}^{n-1} \mathbbm{1}_C \circ f^j = 0$, and $\mu_{\mathcal{M}}(C)
\ge \mu_{\mathcal{M}}(B) - \mu_{\mathcal{M}}(D_\ell)
> 1/4$. Therefore, for every $0 < \epsilon < 1/4$ and every
$n\ge n_\ell$,
\[
\mu_{\mathcal{M}}\left(
\left|
\frac{1}{n}\sum_{j=0}^{n-1} \mathbbm{1}_C \circ f^j - \mu_{\mathcal{M}}(C)
\right| \ge \epsilon
\right)
\ge c_\ell n^{1-\xi}.
\]

Consequently,
\[
\limsup_{n\to\infty}
\frac{
\log \mu_{\mathcal{M}}\left(
\left|
\frac{1}{n}\sum_{j=0}^{n-1} \mathbbm{1}_C \circ f^j - \mu_\mathcal{M}(C)
\right| \ge \epsilon
\right)}
{-\log n}
\le \xi - 1
\]
Moreover, $\|\mathbbm{1}_C - \mathbbm{1}_B\|_{1}
\precsim \Leb_{\mathcal{M}}(D_\ell) \to 0$.
Thus these $C$ form a dense subset of $\mathcal{A}$ and satisfy (\ref{1}).
\end{proof}

\section{Central limit theorems (CLT) with convergence rates}
The CLT is one of the central statistical properties of chaotic dynamical systems. When the observables are regular, \cite{penerate, bunimovich2, Y} proved the CLT (with convergence rates) for Sinai billiards (a special case of hyperbolic systems with CMZ structures). Now we prove such a CLT for our class of irregular observables. 
\begin{theorem}[CLT with convergence rates for indicator observables]\label{clt}\ \par
Suppose that a hyperbolic dynamical system $(\mathcal{M},f, \mu_{\mathcal{M}})$ with a CMZ structure described in section \ref{cmz} satisfies additional assumptions $\alpha>1$, $\frac{d\mu_{\mathcal{M}}}{d\Leb_{\mathcal{M}}}\in L^{\infty}$ and $\int R_p^{\xi} d\mu_{\Lambda}< \infty$ for some $\xi>2$. Given any $B\in \mathcal{B}$  with $\Leb_{\mathcal{M}}(B)\in (0, \Leb_{\mathcal{M}}(\mathcal{M}))$,  \begin{enumerate}
    \item If $\sigma^2:=\mu_{\mathcal{M}}(B)-\mu_{\mathcal{M}}(B)^2+2\sum_{i \ge 1}\int [\mathbbm{1}_B-\mu_{\mathcal{M}}(B)]\circ f^i[\mathbbm{1}_B-\mu_{\mathcal{M}}(B)]d\mu_{\mathcal{M}}=0$, then there is an $L^1$-function $\phi$ such that $\mathbbm{1}_B-\mu_{\mathcal{M}}(B)=\phi \circ f -\phi$.

\item If $\sigma^2\neq 0$, then  $\sigma^2\in (0,\infty)$ and there is a constant $C>0$ such that  for any $t\in \mathbb{R},n>0$\begin{align*}
   &\Big|\int \exp\Big(it\frac{\sum_{0\le i\le n-1}[\mathbbm{1}_B\circ f^i-\mu_{\mathcal{M}}(B)]}{\sqrt{n}}\Big)d\mu_{\mathcal{M}}- e^{-\frac{1}{2}\sigma^2 t^2}\Big|\le C (\vert{}t\vert{}+ t^4)n^{-\frac{\min\{\xi-2, \alpha-1\} \min\{\xi-2, 1\} (\xi-2)^2}{32(\xi-1)^2(3\xi-4)}} ,
   \end{align*}
   \begin{align*}
   &\sup_{x\in \mathbb{R}}\Big|\mu_{\mathcal{M}}\Big(\frac{\sum_{0\le i\le n-1}[\mathbbm{1}_B\circ f^i-\mu_{\mathcal{M}}(B)]}{\sqrt{n}}<x\Big)- \frac{1}{\sqrt{2\pi\sigma^2}}\int_{-\infty}^x e^{-\frac{1}{2\sigma^2 } t^2}dt \Big|\le C n^{-\frac{\min\{\xi-2, \alpha-1\} \min\{\xi-2, 1\} (\xi-2)^2}{192(\xi-1)^2(3\xi-4)}} .
   \end{align*}
   \end{enumerate}
\end{theorem}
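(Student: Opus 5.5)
We approximate the irregular observable $\mathbbm{1}_B$ by observables that are measurable with respect to a finite level of the partition $(\mathcal{Q}_k)$ on $\Delta$. These are good "regular" observables on the tower, to which martingale/characteristic-function methods apply. The approximation error is controlled by Lemma \ref{boundarymeasure}, and the levels are balanced against $n$ at the end.

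\textbf{Approximation.} As in the proof of Theorem \ref{mixingrate}, lift to $\Delta$ and write $\psi:=\mathbbm{1}_{\pi^{-1}B}-\mu_{\Delta}(\pi^{-1}B)$. By invariance, $\sum_{i<n}\psi\circ F^i$ has the same law as $\sum_{i<n}\psi\circ F^{i+k}$. Define $\psi_k:=\mathbbm{1}_{A_0}-\mu_\Delta(A_0)$, where $A_0$ is the union of the $\mathcal{Q}_{2k}$-atoms meeting $F^{-k}\pi^{-1}B$. Then $\psi_k$ is $\sigma(\mathcal{Q}_{2k})$-measurable and, by Lemma \ref{boundarymeasure}, $\|\psi\circ F^k-\psi_k\|_1\precsim k^{-\alpha}\Leb_{\partial B}(\partial B)$. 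This discrepancy is an indicator, so the same bound holds in every $L^q$ norm to the power $1/q$. Since $\psi_k$ is constant on stable leaves, it descends to $\widetilde\psi_k$ on $\widetilde\Delta$, which depends only on the first $2k$ symbols.

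\textbf{Degenerate case and variance.} We would show that the Green--Kubo series converges absolutely. Its terms $\int\psi\,\psi\circ F^i$ are bounded by Theorem \ref{mixingrate} by $i^{1-\xi}+i^{-\alpha}$, which is summable since $\xi>2$ and $\alpha>1$. For $\widetilde\psi_k$ we solve the Poisson equation $\widetilde\psi_k=\chi_k+h_k\circ\widetilde F-h_k$ with $h_k=\sum_{j\ge1}P^j\widetilde\psi_k$. By Lemma \ref{youngmixing} (the $m^{\xi-1}p^{1-\xi}$ form, with $m=2k$), this gives $\|h_k\|_1\precsim k^{\xi-1}$ and control in $L^2$ from the $L^\infty$ bound. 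Here $\chi_k$ is a reverse martingale difference, and $\sigma_k^2=\|\chi_k\|_2^2\to\sigma^2$ at a polynomial rate in $k$. If $\sigma^2=0$, we show that $\|\chi_k\|_2\to0$ and that $h_k$ together with the telescoping correction $\sum_{j<k}(\psi\circ F^j-\ldots)$ has an $L^1$ limit $\phi$. This is done by summing the increments $\psi\circ F^{k+1}-\psi\circ F^k$ across dyadic $k$, using $\alpha>1$ for summability, and then pushing down via $\pi$ to obtain the coboundary on $\mathcal{M}$. This is the more delicate part of item 1.

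\textbf{Rates.} Write
\[
E_n(t)=\int e^{itS_n\psi/\sqrt n}d\mu_\Delta-e^{-\sigma^2t^2/2}.
\]
We bound $|E_n(t)|$ by three terms:
\begin{enumerate}
\item $|t|\,n^{-1/2}\|S_n(\psi\circ F^k-\psi_k)\|_1\le |t|\sqrt n\,k^{-\alpha}$;
\item the martingale CLT error for $\sum\chi_k\circ\widetilde F^i$, which is $t^4$ times a negative power of $n$ times a power of $k$ (via conditional-variance convergence, using $L^2$ decay of $\mathbb{E}(\chi_k^2|\cdots)-\sigma_k^2$ from Lemma \ref{youngmixing});
\item the coboundary term $|t|\,n^{-1/2}\|h_k\|_1\precsim|t|n^{-1/2}k^{\xi-1}$, plus $|e^{-\sigma_k^2t^2/2}-e^{-\sigma^2t^2/2}|\le t^2|\sigma_k^2-\sigma^2|$.
\end{enumerate}
Choosing $k=n^{\theta}$ with $\theta$ optimized over the exponents involving $\xi-2$ and $\alpha-1$ yields the characteristic-function bound. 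The Kolmogorov bound then follows from Esseen's smoothing inequality: integrate over $|t|\le T$ with $T=n^{\gamma/6}$, which explains the factor $6$ between the exponents $32$ and $192$.

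\textbf{Main obstacle.} The hard step is the martingale part (item 2 above), namely a quantitative bound on the conditional variance of $\chi_k$ in terms of $k$. This is needed because only polynomial, $L^1$-type mixing is available. We would first try to control $\|\frac1n\sum_i(\chi_k^2\circ\widetilde F^i-\sigma_k^2)\|_1$ by a second application of Lemma \ref{youngmixing}, where $\chi_k^2$ again depends on finitely many symbols after truncating $h_k$ at level $\sim k$.
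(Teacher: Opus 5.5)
Your first error term is where the argument breaks. You bound $n^{-1/2}\|S_n(\psi\circ F^k-\psi_k)\|_1$ by the triangle inequality and get $|t|\sqrt n\,k^{-\alpha}$. This throws away all cancellation in the Birkhoff sum of the error, and it forces $k\gg n^{1/(2\alpha)}$.

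Against your coboundary term $|t|n^{-1/2}k^{\xi-1}$, this requires $k^{\xi-1}\ll\sqrt n\ll k^{\alpha}$, i.e.\ $\alpha>\xi-1$. The hypotheses ($\alpha>1$, $\xi>2$) do not give this. You can sharpen the coboundary bound to $\|h_k\|_1\precsim k$, since $\|P^j\widetilde\psi_k\|_1\le 2$ for $j\le 4k$. But that only reopens the window $k=n^\theta$ with $\theta\in(1/(2\alpha),1/2)$. This window collapses as $\alpha\downarrow 1$, and it leaves essentially no room for the $k$-dependence of the martingale term, which you have not estimated. Also, none of your bounds produces the exponent $\min\{\xi-2,\alpha-1\}$ that you claim the optimization yields.

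The missing idea is the core of Lemma \ref{approximation}: control the error in $L^2$ through decay of correlations of the thin error set $E:=A\setminus F^{-k}\pi^{-1}B$ itself.
\begin{itemize}
\item For a lag $i>4k$, pull back by $F^{-m}$ with $m=\lfloor i/4\rfloor-k$. Re-cover $F^{-m}E$ by $\mathcal{Q}_{2(m+k)}$-atoms; those not contained in $F^{-m}E$ have total measure $\precsim (m+k)^{-\alpha}$ by (\ref{0}), as in Lemma \ref{boundarymeasure}. Lemma \ref{youngmixing} then gives $|\mathrm{Cov}(\mathbbm{1}_E,\mathbbm{1}_E\circ F^i)|\precsim i^{-\min\{\xi-1,\alpha\}}$.
\item For $i\le 4k$, use the trivial bound $\mu_\Delta(E)\precsim k^{-\alpha}$.
\end{itemize}
Summing gives $\int(\sum_{j<n}[\mathbbm{1}_E-\mu_\Delta(E)]\circ F^j)^2\,d\mu_\Delta\precsim n\,k^{-\min\{\xi-2,\alpha-1\}}$. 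So the characteristic-function error is $|t|\,k^{-\min\{\xi-2,\alpha-1\}/2}$, uniformly in $n$. This is what lets $k$ be a small power of $n$, and it is where $\min\{\xi-2,\alpha-1\}$ in the rate comes from.

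The rest of your route (a martingale--coboundary decomposition on $\widetilde\Delta$) differs from the paper's and has further soft spots.
\begin{itemize}
\item Controlling $h_k$ in $L^2$ ``from the $L^\infty$ bound'' does not work. Off the base, $P$ merely shifts values up each column, so $\|P^j\widetilde\psi_k\|_\infty$ does not decay. Hence $h_k$ grows like partial column sums, and in general $h_k\in L^p$ only for $p<\xi-1$.
\item The conditional-variance estimate, which you flag as the main obstacle, is left open. It is delicate because $\widetilde\psi_k$ has separation-time Lipschitz constant of order $\beta^{-2k}$, so one must first smooth by $P^{2k}$.
\end{itemize}
The paper avoids the martingale decomposition altogether. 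It shows that $P^{2k}(P^j(\psi_k)\psi_k)$ is locally H\"older with constants uniform in $j,k$ (Lemma \ref{a3toa6}). This gives $L^1$-decay $k^{\xi-1}n^{1-\xi}$ for $P^n\psi_k$, for $P^n(\psi_k^2-\int\psi_k^2\,d\mu_{\widetilde\Delta})$, and for $P^n(P^j(\psi_k)\psi_k-\int P^j(\psi_k)\psi_k\,d\mu_{\widetilde\Delta})$. These feed into the moment bounds and the Bernstein-block characteristic-function argument of \cite{Sutams} (Lemmas \ref{momentbound}--\ref{preclt}).

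For item 1, the paper simply invokes \cite{liveraniclt}. Your dyadic-summation sketch is not yet an argument. The $L^1$ convergence of $h_k$ minus the telescoping sums is exactly what would need proof. Moreover, a coboundary on $\Delta$ descends to $\mathcal{M}$ only if its transfer function is constant on the fibres of $\pi$.
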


\begin{remark}
The assumption $\xi>2$ is optimal, because the billiards of interest (which satisfy $\xi\le 2$) do not satisfy the CLT for some special indicator functions (whose supports are elements of a partition), see \cite{gouezelbalint, cuspsmorethanone, hongkunstable, melbournelevy2}. We will prove different limit theorems for these types of billiards for indicator observables in a different paper. 
\end{remark}
\begin{remark}
     Since $\mathbbm{1}_{B}$ is not a dynamically regular function, the convergence rates for the CLT are not as good as those in the standard CLT context.
\end{remark}

\subsection{Scheme of the proof}
    \begin{enumerate}
        \item \label{excludecobound} Prove $\sigma^2\in [0, \infty)$ and that $\mathbbm{1}_B-\mu_{\mathcal{M}}(B)$ is a coboundary when $\sigma^2=0$. 
        \item \label{approximatedbyregularset} When $\sigma^2\in (0,\infty)$, note that $\mathbbm{1}_B\circ f^i-\mu_{\mathcal{M}}(B)=_d  \mathbbm{1}_{F^{-k}\pi^{-1}B}\circ F^i-\mu_{\Delta}(F^{-k}\pi^{-1}B)$ ($k$ to be determined), approximate $\mathbbm{1}_{F^{-k}\pi^{-1}B}$ by $\mathcal{Q}_{2k}$-measurable function $\mathbbm{1}_{A}$ and estimate
        \begin{align*}
        &\Big|\int \exp\Big(it\frac{\sum_{0\le i\le n-1}[\mathbbm{1}_B\circ f^i-\mu_{\mathcal{M}}(B)]}{\sqrt{n}}\Big)d\mu_{\mathcal{M}}-\int \exp\Big(it\frac{\sum_{0\le i\le n-1}[\mathbbm{1}_{A}\circ F^i-\int \mathbbm{1}_{A} d\mu_{\Delta}]}{\sqrt{n}}\Big)d\mu_{\Delta}\Big|
        \end{align*}
        \item \label{severalvariances} Prove the existence of $\sigma^2_k:=\lim_{n \to \infty }\int \Big(\frac{\sum_{0\le i\le n-1}[\mathbbm{1}_{A}\circ F^i-\int \mathbbm{1}_{A} d\mu_{\Delta}]}{\sqrt{n}}\Big)^2d\mu_{\Delta}$,  estimate $|\sigma^2_k-\sigma^2|$  and $\Big|\int \Big(\frac{\sum_{0\le i\le n-1}[\mathbbm{1}_{A}\circ F^i-\int \mathbbm{1}_{A} d\mu_{\Delta}]}{\sqrt{n}}\Big)^2d\mu_{\Delta}-\sigma_k^2\Big|$.
        \item \label{verifysutams} Verify the condition of Remark 2.8 in \cite{Sutams} and derive the moment inequalities in \cite{Sutams} for $\mathbbm{1}_{A}-\int \mathbbm{1}_{A} d\mu_{\Delta}$.
        \item \label{estimatepreclt} Estimate $\Big|\int \exp\Big(it\frac{\sum_{0\le i\le n-1}[\mathbbm{1}_{A}\circ F^i-\int \mathbbm{1}_{A} d\mu_{\Delta}]}{\sqrt{n}}\Big)d\mu_{\Delta}-\exp\Big(-  \frac{t^2}{2}\int \Big(\frac{\sum_{0\le i\le n-1}[\mathbbm{1}_{A}\circ F^i-\int \mathbbm{1}_{A} d\mu_{\Delta}]}{\sqrt{n}}\Big)^2d\mu_{\Delta}\Big)\Big|$ 
        \item \label{provingclt} Choose $k=n^{\eta}$ for a suitable $\eta>0$, combine all estimates in previous steps and estimate $\Big|\int \exp\Big(it\frac{\sum_{0\le i\le n-1}[\mathbbm{1}_B\circ f^i-\mu_{\mathcal{M}}(B)]}{\sqrt{n}}\Big)d\mu_{\mathcal{M}}-e^{-0.5 \sigma^2 t^2}\Big|$.
         \item \label{provefurtherclt} Estimate $\sup_{x\in \mathbb{R}}|\mu_{\mathcal{M}}(\frac{\sum_{0\le i\le n-1}[\mathbbm{1}_B\circ f^i-\mu_{\mathcal{M}}(B)]}{\sqrt{n}}<x)- \frac{1}{\sqrt{2\pi\sigma^2}}\int_{-\infty}^x e^{-\frac{1}{2\sigma^2 } t^2}dt |$.
    \end{enumerate}

\subsection{Proof of Theorem \ref{clt}}

\subsection*{Step \ref{excludecobound}:  $\sigma^2=0$ and $\sigma^2\in (0,\infty)$.}
\begin{lemma}[see \cite{liveraniclt}]\label{notacoboundary}
    If $\sigma^2=0$, $\mathbbm{1}_B-\mu_{\mathcal{M}}(B)=\phi \circ f -\phi$ for some $\phi \in L^1$.
\end{lemma}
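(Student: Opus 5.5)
We plan to follow the classical argument of Liverani: when the correlations of $\psi:=\mathbbm{1}_B-\mu_{\mathcal{M}}(B)$ are summable, the vanishing of the asymptotic variance forces $\psi$ to be an $L^1$ coboundary. Put $S_n:=\sum_{0\le i\le n-1}\psi\circ f^i$ and $\rho(i):=\int \psi\,\psi\circ f^i d\mu_{\mathcal{M}}$.

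\textbf{Summability and the variance formula.} Theorem \ref{mixingrate} gives $|\rho(i)|\precsim i^{1-\xi}+i^{-\alpha}$. Since $\xi>2$ and $\alpha>1$, we get $\sum_i|\rho(i)|<\infty$. Hence $\sigma^2$ is well defined, and by the standard Ces\`aro computation
\[\frac1n\int S_n^2\,d\mu_{\mathcal{M}}=\rho(0)+2\sum_{i=1}^{n-1}\Big(1-\frac in\Big)\rho(i)\to\sigma^2.\]
This also shows $\sigma^2\ge 0$, which we need for the second part of Theorem \ref{clt}. Moreover,
\[\int S_n^2\,d\mu_{\mathcal{M}}=n\sigma^2-2\sum_{i\ge1}\min(i,n)\rho(i)-2n\sum_{i\ge n}\rho(i).\]

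\textbf{Case $\sigma^2=0$: boundedness in $L^2$.} When $\sigma^2=0$ we want $\sup_n\|S_n\|_2<\infty$, and for this we need $\sum_i i|\rho(i)|<\infty$. This requires $\xi>2$ and $\alpha>1$ in the strict sense; the conditions $i^{2-\xi}$ and $i^{1-\alpha}$ summable would need $\xi>3$ and $\alpha>2$. So the bound on $\sum_i i|\rho(i)|$ is not immediate. We first try the weaker route, which avoids it. From the display we only obtain $\|S_n\|_2^2=o(n)$, plus the estimate
\[\|S_n\|_2^2\precsim \sum_{i\le n}i|\rho(i)|+n\sum_{i\ge n}|\rho(i)|\precsim n^{3-\xi}+n^{2-\alpha}\]
(with logarithmic factors at the borderline exponents). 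In Liverani's framework this is combined with a martingale–coboundary decomposition $\psi=m+\chi\circ f-\chi$: summable correlations of $\psi$ against the tail $\sigma$-algebras, obtained from Lemma \ref{youngmixing} via the approximation by $\mathcal{Q}_{2k}$-measurable sets from Lemma \ref{boundarymeasure}, give the series $\chi=\sum_{i\ge 0}\mathbb{E}(\psi\circ f^{i}\mid\cdot)$ converging in $L^1$. Then $\sigma^2=\|m\|_2^2=0$ forces $m=0$, so $\psi=\chi\circ f-\chi$ with $\chi\in L^1$. We will take $\phi:=\chi$ (or $-\chi$, depending on the convention for the direction of $f$).

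\textbf{Alternative route and main obstacle.} If we have $\sup_n\|S_n\|_1<\infty$, or even just tightness of $(S_n)$, we instead use the classical ergodic-theoretic fact (a Leonov/Schmidt-type result) that a stationary process with tight partial sums is a coboundary with a measurable transfer function. A weak-$L^2$ limit of the Ces\`aro averages $\frac1N\sum_{n<N}S_n$ then produces $\phi$ with $\psi=\phi\circ f-\phi$. The main obstacle is carrying out the decomposition on the Young tower for an irregular observable. Conditional expectations of $\mathbbm{1}_{F^{-k}\pi^{-1}B}$ with respect to $\sigma(\mathcal{Q}_k)$ deviate from the indicator only on the sets $A_2$ of Lemma \ref{boundarymeasure}, of measure $\precsim k^{-\alpha}$. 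Summability in $k$ of these errors, which holds because $\alpha>1$, is exactly what makes the series defining $\chi$ converge in $L^1$. We expect this verification to be the crux, and we would import it from \cite{liveraniclt}.
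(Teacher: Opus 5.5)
Your plan is essentially the paper's. The paper gives no argument beyond citing \cite{liveraniclt}, and checking Liverani's hypotheses on the tower with Lemma \ref{youngmixing} plus the $\precsim k^{-\alpha}$ stable-leaf approximation behind Lemma \ref{boundarymeasure} (summable because $\xi>2$, $\alpha>1$) is exactly what that citation tacitly requires, so your $L^2$-boundedness and tightness detours can simply be dropped. To make it precise: since $F$ is invertible, with $\mathcal{F}_0:=\sigma(\bigcup_{i\ge 0}\mathcal{Q}_i)$ the hypotheses are the two conditions $\sum_{n}\|\psi\circ\pi\circ F^{n}-\mathbb{E}(\psi\circ\pi\circ F^{n}\mid\mathcal{F}_0)\|_1\precsim\sum_n n^{-\alpha}<\infty$ and $\sum_n\|\mathbb{E}(\psi\circ\pi\circ F^{-n}\mid\mathcal{F}_0)\|_1\precsim\sum_n (n^{1-\xi}+n^{-\alpha})<\infty$, not the one-sided series $\sum_{i\ge0}\mathbb{E}(\psi\circ f^{i}\mid\cdot)$, whose terms do not decay for this $\sigma$-algebra; moreover, the resulting $L^1$ coboundary lives on $\Delta$ and descends to $\mathcal{M}$ by conditioning on $\pi^{-1}$ of the Borel $\sigma$-algebra of $\mathcal{M}$, which is $F$-invariant since $\pi\circ F=f\circ\pi$ with $f$ invertible.
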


\begin{lemma}\label{variancesigma}
    If $\sigma^2\neq 0$, then $\sigma^2\in (0,\infty)$ and \[\int \Big(\frac{\sum_{0\le i\le n-1}[\mathbbm{1}_{B}\circ f^i-\mu_{\mathcal{M}}(B)]}{\sqrt{n}}\Big)^2d\mu_{\mathcal{M}}-\sigma^2=O(n^{-1+\max\{2-\min(\xi-1,\alpha),0\}}\log n).\]
\end{lemma}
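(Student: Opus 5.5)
Write $\psi:=\mathbbm{1}_B-\mu_{\mathcal{M}}(B)$ and $\rho(i):=\int \psi\,\psi\circ f^i\,d\mu_{\mathcal{M}}$. The plan is to reduce everything to the summability of the correlations $\rho(i)$, which Theorem \ref{mixingrate} provides.

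\textbf{Step 1: decay of the correlations.} Applying Theorem \ref{mixingrate} with $B'=B$ gives, for $i\ge p_{B,B}$,
\[
|\rho(i)|\precsim_B i^{1-\xi}+i^{-\alpha}\precsim i^{-\min(\xi-1,\alpha)}.
\]
For the finitely many $i<p_{B,B}$ we only use $|\rho(i)|\le 1$. Set $\theta:=\min(\xi-1,\alpha)$. Since $\xi>2$ and $\alpha>1$, we have $\theta>1$. Hence $\sum_{i\ge1}|\rho(i)|<\infty$, the series defining $\sigma^2$ converges absolutely, and $\sigma^2<\infty$.

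\textbf{Step 2: the variance identity.} By invariance of $\mu_{\mathcal{M}}$, expanding the square gives the standard identity
\[
\int\Big(\tfrac{S_n\psi}{\sqrt n}\Big)^2d\mu_{\mathcal{M}}=\rho(0)+2\sum_{i=1}^{n-1}\Big(1-\tfrac{i}{n}\Big)\rho(i).
\]
The left side is nonnegative for every $n$. Letting $n\to\infty$ and using Step 1 with dominated convergence, the right side tends to $\sigma^2$. Therefore $\sigma^2\ge0$, and under the hypothesis $\sigma^2\neq0$ we get $\sigma^2\in(0,\infty)$.

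\textbf{Step 3: the rate.} Subtracting the identity of Step 2 from $\sigma^2$ gives
\[
\int\Big(\tfrac{S_n\psi}{\sqrt n}\Big)^2d\mu_{\mathcal{M}}-\sigma^2=-2\sum_{i\ge n}\rho(i)-\tfrac{2}{n}\sum_{i=1}^{n-1}i\,\rho(i).
\]
The tail term is bounded by $\sum_{i\ge n} i^{-\theta}\precsim n^{1-\theta}$. For the second term, $\frac1n\sum_{i<n} i^{1-\theta}$ behaves differently depending on $\theta$:
\begin{itemize}
\item if $\theta>2$, it is $O(n^{-1})$;
\item if $\theta=2$, it is $O(n^{-1}\log n)$;
\item if $1<\theta<2$, it is $O(n^{1-\theta})$.
\end{itemize}
The target exponent is $-1+\max\{2-\theta,0\}$. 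This equals $-1$ when $\theta\ge 2$ and equals $1-\theta$ when $\theta<2$. So in every case both terms are $O(n^{-1+\max\{2-\theta,0\}}\log n)$; the $\log n$ factor is needed only when $\theta=2$ and is harmless otherwise.

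\textbf{Main obstacle.} The only nontrivial input is Step 1, namely that the correlation bound from Theorem \ref{mixingrate} is summable even though $\mathbbm{1}_B$ is irregular. This is exactly where the hypotheses $\alpha>1$ and $\xi>2$ are used, through $\theta>1$. The remaining steps are routine bookkeeping.
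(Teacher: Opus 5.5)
Your proposal is correct and follows essentially the same route as the paper: bound the autocorrelations by $i^{-\min\{\xi-1,\alpha\}}$ via Theorem \ref{mixingrate}, then expand the variance using invariance to obtain nonnegativity and convergence to $\sigma^2$, and finally estimate the remainder. Your remainder $-2\sum_{i\ge n}\rho(i)-\frac{2}{n}\sum_{i<n}i\,\rho(i)$ is just a rearrangement of the paper's $-2n^{-1}\sum_{i\le n-1}\sum_{j>n-1-i}\rho(j)$, and your explicit case split on $\theta$ and dominated-convergence justification only make the paper's one-line bound slightly more careful.
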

\begin{proof}By Theorem \ref{mixingrate},
    $|\int [\mathbbm{1}_B-\mu_{\mathcal{M}}(B)]\circ f^i[\mathbbm{1}_B-\mu_{\mathcal{M}}(B)]d\mu_{\mathcal{M}}| \precsim i^{-\min\{\xi-1,\alpha\}}\to 0$. Hence
    \begin{align*}
    0&\le \int \Big(\frac{\sum_{0\le i\le n-1}[\mathbbm{1}_{B}\circ f^i-\mu_{\mathcal{M}}(B)]}{\sqrt{n}}\Big)^2d\mu_{\mathcal{M}}\\
    &=\int[\mathbbm{1}_{B}-\mu_{\mathcal{M}}(B)]^2 d\mu_{\mathcal{M}}+2\sum_{i \ge 1}\int [\mathbbm{1}_{B}-\mu_{\mathcal{M}}(B)][\mathbbm{1}_{B}-\mu_{\mathcal{M}}(B)]\circ f^{i}d\mu_{\mathcal{M}}\\
    &\quad -2n^{-1} \sum_{i\le n-1}\sum_{j> n-1-i}\int [\mathbbm{1}_{B}-\mu_{\mathcal{M}}(B)][\mathbbm{1}_{B}-\mu_{\mathcal{M}}(B)]\circ f^{j}d\mu_{\mathcal{M}}\\
    &=\sigma^2+n^{-1}O(\sum_{i \le n-1}(n-i)^{1-\min\{\xi-1,\alpha\}})=\sigma^2+O(n^{-1}n^{\max\{2-\min(\xi-1,\alpha),0\}}\log n) \to \sigma^2.
\end{align*} 

Hence $\sigma^2> 0$ and $\sigma^2=\int [\mathbbm{1}_{B}-\mu_{\mathcal{M}}(B)]^2 d\mu_{\mathcal{M}}+2\sum_{i \ge 1}\int [\mathbbm{1}_{B}-\mu_{\mathcal{M}}(B)][\mathbbm{1}_{B}-\mu_{\mathcal{M}}(B)]\circ f^{i}d\mu_{\mathcal{M}} \precsim  \sum_{i} i^{-\min\{\xi-1,\alpha\}} < \infty$.
\end{proof}

From now on we assume that $\sigma^2\in (0, \infty)$.
\subsection*{Step \ref{approximatedbyregularset}: approximation.}

Let $k\ge 1$ (to be determined), cover $F^{-k}\pi^{-1}B$ with a family of $ \xi_{2k,i}$ where each $\xi_{2k,i} \in \mathcal{Q}_{2k}$. Collect the $\xi_{2k,i}$ which is not completely contained in $F^{-k}\pi^{-1}B$, denote them by $\partial \xi_{2k,i}$ and let $A_2:=\bigcup_i \partial \xi_{2k,i}, A:=\bigcup_i \xi_{2k,i}$. By Lemma \ref{boundarymeasure}, $\mu_{\Delta}(A_2 )\precsim k^{-\alpha} \Leb_{\partial B}(\partial B)$, $ F^{-k}\pi^{-1}B \subseteq A \in \sigma(\mathcal{Q}_{2k})$. Then we have the following important lemma used often throughout the  remainder of the paper.
\begin{lemma}\label{approximation}
    There is $C>0$ such that for any $n>0$,
        \begin{align*}
        &\int \exp\Big(it\frac{\sum_{0\le i\le n-1}[\mathbbm{1}_B\circ f^i-\mu_{\mathcal{M}}(B)]}{\sqrt{n}}\Big)d\mu_{\mathcal{M}}\\
            &=\int \exp\Big(it\frac{\sum_{0\le i\le n-1}[\mathbbm{1}_{F^{-k}\pi^{-1}B}\circ F^i-\mu_{\Delta}(F^{-k}\pi^{-1}B)]}{\sqrt{n}}\Big)d\mu_{\Delta}\\
            &=\int \exp\Big(it\frac{\sum_{0\le i\le n-1}[\mathbbm{1}_{A}\circ F^i-\int \mathbbm{1}_{A} d\mu_{\Delta}]}{\sqrt{n}}\Big)d\mu_{\Delta}+O(|t| k^{-\min\{\xi-2,\alpha-1\}/2}), 
            \end{align*}
            \begin{align*}
            &\Big| \int\Big(\sum_{0\le i\le n-1}[\mathbbm{1}_{A}\circ F^i-\mu_{\Delta}(A)]\Big)^2d\mu_{\Delta}- \int\Big(\sum_{0\le i\le n-1}[\mathbbm{1}_{F^{-k}\pi^{-1}B}\circ F^i-\mu_{\Delta}(F^{-k}\pi^{-1}B)]\Big)^2d\mu_{\Delta}\Big|\\
            &=O(n k^{-\min\{\xi-2,\alpha-1\}})
            \end{align*}
            \begin{gather*}
                \int \Big(\sum_{0\le i\le n-1}[\mathbbm{1}_{A\setminus F^{-k}\pi^{-1}B}\circ F^i-\mu_{\Delta}(A\setminus F^{-k}\pi^{-1}B)]\Big)^2d\mu_{\Delta}=O(n k^{-\min\{\xi-2,\alpha-1\}})
                    \end{gather*}
where the constants in $O(\cdot)$ do not depend on $t, k, n$. 
\end{lemma}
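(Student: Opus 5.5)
The proof rests on the identity $\mathbbm{1}_A=\mathbbm{1}_{F^{-k}\pi^{-1}B}+\mathbbm{1}_{A\setminus F^{-k}\pi^{-1}B}$. Together with a second-moment bound for the Birkhoff sums of the small set $A\setminus F^{-k}\pi^{-1}B\subseteq A_2$, this identity gives all three assertions.

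The first equality is immediate. We have $\mu_{\mathcal{M}}=\pi_*\mu_{\Delta}$, $\pi\circ F=f\circ\pi$, and $F^k$ preserves $\mu_{\Delta}$. Hence the distribution of $\sum_i\mathbbm{1}_B\circ f^i$ under $\mu_{\mathcal M}$ equals that of $\sum_i \mathbbm{1}_{\pi^{-1}B}\circ F^{i}$ under $\mu_\Delta$. Composing with $F^k$ leaves the latter distribution unchanged and turns it into $\sum_i\mathbbm{1}_{F^{-k}\pi^{-1}B}\circ F^i$.

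The core step is the third estimate. Put $G:=A\setminus F^{-k}\pi^{-1}B$ and $\psi:=\mathbbm{1}_G-\mu_\Delta(G)$, and expand
\[
\int\Big(\sum_{i<n}\psi\circ F^i\Big)^2d\mu_\Delta\le n\sum_{|j|<n}\Big|\int\psi\,\psi\circ F^{|j|}d\mu_\Delta\Big|.
\]
We split the sum over $j$ at the threshold $j\approx k$. For $j\le 4k$ we use the trivial bound $|\int\psi\,\psi\circ F^j|\le 2\mu_\Delta(G)\le2\mu_\Delta(A_2)\precsim k^{-\alpha}$, which follows from Lemma \ref{boundarymeasure}. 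The total contribution of these terms is $\precsim n k^{1-\alpha}$.

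For $j>4k$ the set $G$ is not $\mathcal{Q}_{2k}$-measurable, so Lemma \ref{youngmixing} cannot be applied to it directly. Instead we write $\int\psi\,\psi\circ F^j$ as a difference of the correlations for $\mathbbm{1}_A$ and for $\mathbbm{1}_{F^{-k}\pi^{-1}B}$, together with cross terms. Each of these is a correlation of $\mathcal{Q}_{2k}$-measurable sets up to errors of size $O(\mu_\Delta(A_2))$; this is the device used in the proof of Theorem \ref{mixingrate}. Alternatively, one can apply Theorem \ref{mixingrate} directly on $\mathcal M$ to $B$ and its $\mathcal Q_{2k}$-approximants. Since $A$ and $G$ are $\mathcal{Q}_{2k}$-unions, Lemma \ref{youngmixing} gives a bound $\precsim (j-2k)^{1-\xi}\mu_\Delta(\cdot)$ for those pieces. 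For the $B$-pieces, Theorem \ref{mixingrate} gives decay $j^{1-\xi}+j^{-\alpha}$. Summing over $j>4k$ produces $\precsim n(k^{2-\xi}+k^{1-\alpha})$. Combined with the first range, this yields $O(nk^{-\min\{\xi-2,\alpha-1\}})$; here we use $\xi>2$ and $\alpha>1$ so that the tails are summable.

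This tail bookkeeping is the main obstacle. The difficulty is to ensure the cross terms genuinely carry a small factor, either $\mu(A_2)$ or the tail sum, and not an $O(1)$ factor. If the first approach leaves $O(1)$ factors, I would fall back on the cruder bound $|\int\psi\,\psi\circ F^j|\le \min\{2\mu(A_2),\,C j^{-\min\{\xi-1,\alpha\}}\}$ and optimize the threshold at $j=k$.

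The second estimate then follows from the third. Write $S_A=S_B+S_G$ for the centered Birkhoff sums, so that $|\int S_A^2-\int S_B^2|\le \|S_G\|_2^2+2\|S_B\|_2\|S_G\|_2$. Lemma \ref{variancesigma} gives $\|S_B\|_2\precsim\sqrt n$. With this, the cross term is $O(nk^{-\min\{\xi-2,\alpha-1\}/2})$, which is weaker than claimed. To reach the stated rate I would instead expand the cross term directly as a sum of correlations $\int\psi\,\phi\circ F^j$, with $\phi$ the centered indicator of $F^{-k}\pi^{-1}B$, and apply the same two-range split. The first estimate uses only the weaker bound. Since $|e^{ia}-e^{ib}|\le|a-b|$, we get
\[
\Big|\int e^{itS_A/\sqrt n}-\int e^{itS_B/\sqrt n}\Big|\le \frac{|t|}{\sqrt n}\|S_G\|_1\le\frac{|t|}{\sqrt n}\|S_G\|_2=O(|t|k^{-\min\{\xi-2,\alpha-1\}/2}).
\]
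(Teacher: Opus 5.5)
Your architecture matches the paper's: the first equality by invariance; $|e^{ia}-e^{ib}|\le|a-b|$ plus Cauchy--Schwarz for the characteristic functions; expansion of $\int S_G^2$ (and of the cross term in the second estimate) into correlations; and a split of lags at $j\approx 4k$, with the trivial bound $2\mu_\Delta(A_2)\precsim k^{-\alpha}$ for short lags. The gap is in the long-lag range, which is the heart of the lemma. For $j>4k$ you need $|\int\psi\,\psi\circ F^j\,d\mu_\Delta|\precsim j^{-\min\{\xi-1,\alpha\}}$ with a constant independent of $k$, and none of your suggested routes delivers it.

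\begin{itemize}
\item Approximating $F^{-k}\pi^{-1}B$ at the fixed level $\mathcal{Q}_{2k}$ costs an error $O(\mu_\Delta(A_2))$, of order $k^{-\alpha}$, at \emph{every} lag. Summed over $4k<j<n$ this gives $nk^{-\alpha}$, so $\int S_G^2$ would only be $O(n^2k^{-\alpha})$. Your tally $n(k^{2-\xi}+k^{1-\alpha})$ silently drops these errors.
\item Using $\mathbbm{1}_G\le\mathbbm{1}_{A_2}$ fails in the same way: it destroys the cancellation and leaves a lag-independent $\mu_\Delta(A_2)^2$.
\item Theorem \ref{mixingrate} cannot handle the cross terms $\int\mathbbm{1}_A\,\mathbbm{1}_{F^{-k}\pi^{-1}B}\circ F^j\,d\mu_\Delta$. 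It concerns sets in $\mathcal{B}$ on $\mathcal{M}$, whereas $A$ is a countable union of elements of $\mathcal{Q}_{2k}$ on $\Delta$.
\item Lemma \ref{youngmixing} needs sets in $\sigma(\bigcup_i\mathcal{Q}_i)$, which $F^{-k}\pi^{-1}B$ and $G$ in general are not. Your phrase ``since $A$ and $G$ are $\mathcal{Q}_{2k}$-unions'' contradicts your own correct earlier remark.
\end{itemize}
Your fallback $\min\{2\mu_\Delta(A_2),\,Cj^{-\min\{\xi-1,\alpha\}}\}$ is the right target, but its second half is exactly what has to be proved.

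The missing idea is to let the refinement level grow with the lag. For $j>4k$, put $m=\lfloor j/4\rfloor-k$ and use $F$-invariance to write $\int\psi\,\psi\circ F^j=\int\psi\circ F^m\,\psi\circ F^{m+j}$. Then cover $F^{-m}G$ by elements of $\mathcal{Q}_{2(m+k)}$. Since $F^{-m}A\in\sigma(\mathcal{Q}_{m+2k})$, any covering element not contained in $F^{-m}G$ meets both $F^{-(m+k)}\pi^{-1}B$ and its complement. By (\ref{0}), its image under $\pi F^{m+k}$ has diameter $\precsim(m+k)^{-\alpha}$. As in Lemma \ref{boundarymeasure}, these elements therefore have total measure $\precsim(m+k)^{-\alpha}\approx j^{-\alpha}$.

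Lemma \ref{youngmixing} applied to the resulting $\mathcal{Q}_{2(m+k)}$-union gives $(j-2m-2k)^{1-\xi}\precsim j^{1-\xi}$. Hence the correlation is $\precsim j^{-\min\{\xi-1,\alpha\}}$ uniformly in $k$, and the sum over $j>4k$ is $\precsim k^{-\min\{\xi-2,\alpha-1\}}$.

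The same device controls the cross correlations in the second estimate. The paper writes $|\int S_A^2-\int S_B^2|\le\int S_G^2+2|\int S_G S_A|$. It pairs $G$ with the $\mathcal{Q}_{2k}$-measurable set $A$ rather than with $F^{-k}\pi^{-1}B$, so only $G$ needs refining. Once this bound is supplied, the rest of your argument goes through.
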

\begin{proof}Let $B':=F^{-k}\pi^{-1}B$, note that 
     \begin{align*}
            &\Big|\int \exp\Big(it\frac{\sum_{0\le i\le n-1}[\mathbbm{1}_{B'}\circ F^i-\mu_{\Delta}(B')]}{\sqrt{n}}\Big)d\mu_{\Delta}-\int \exp\Big(it\frac{\sum_{0\le i\le n-1}[\mathbbm{1}_{A}\circ F^i-\int \mathbbm{1}_{A} d\mu_{\Delta}]}{\sqrt{n}}\Big)d\mu_{\Delta}\Big| \\
            & \le |t|\int \Big|\frac{\sum_{0\le i\le n-1}[\mathbbm{1}_{A\setminus B'}\circ F^i-\mu_{\Delta}(A\setminus B')]}{\sqrt{n}}\Big|d\mu_{\Delta}\\
            &\le |t|\sqrt{\int \frac{(\sum_{0\le i\le n-1}[\mathbbm{1}_{A\setminus B'}\circ F^i-\mu_{\Delta}(A\setminus B')])^2}{n}d\mu_{\Delta}}.
 \end{align*} We can expand the square term above as follows.
 \begin{align}
            &\int \frac{(\sum_{0\le i\le n-1}[\mathbbm{1}_{A\setminus B'}\circ F^i-\mu_{\Delta}(A\setminus B')])^2}{n}d\mu_{\Delta}\nonumber \\
            &=  \int [\mathbbm{1}_{A\setminus B'}-\mu_{\Delta}(A\setminus B')]^2 d\mu_{\Delta}+2n^{-1} \sum_{i<j\le n-1}\int [\mathbbm{1}_{A\setminus B'}-\mu_{\Delta}(A\setminus B')][\mathbbm{1}_{A\setminus B'}-\mu_{\Delta}(A\setminus B')]\circ F^{j-i}d\mu_{\Delta}\nonumber\\
            &\le  \int [\mathbbm{1}_{A\setminus B'}-\mu_{\Delta}(A\setminus B')]^2 d\mu_{\Delta}+2\sum_{i \ge 1}\Big|\int [\mathbbm{1}_{A\setminus B'}-\mu_{\Delta}(A\setminus B')][\mathbbm{1}_{A\setminus B'}-\mu_{\Delta}(A\setminus B')]\circ F^{i}d\mu_{\Delta}\Big|\nonumber\\
            &\quad +2n^{-1} \sum_{i\le n-1}\sum_{j> n-1-i}\Big|\int [\mathbbm{1}_{A\setminus B'}-\mu_{\Delta}(A\setminus B')][\mathbbm{1}_{A\setminus B'}-\mu_{\Delta}(A\setminus B')]\circ F^{j}d\mu_{\Delta}\Big|.\label{5}
        \end{align}

Similarly, we can expand the following square terms  \begin{align}
            &\Big| \int\Big(\sum_{0\le i\le n-1}[\mathbbm{1}_{A}\circ F^i-\mu_{\Delta}(A)]\Big)^2d\mu_{\Delta}- \int\Big(\sum_{0\le i\le n-1}[\mathbbm{1}_{F^{-k}\pi^{-1}B}\circ F^i-\mu_{\Delta}(F^{-k}\pi^{-1}B)]\Big)^2d\mu_{\Delta}\Big|\nonumber \\
            & \le \int \Big(\sum_{0\le i\le n-1}[\mathbbm{1}_{A\setminus B'}\circ F^i-\mu_{\Delta}(A\setminus B')]\Big)^2d\mu_{\Delta}\nonumber\\
            &\quad +2\Big|\int \Big(\sum_{0\le i\le n-1}[\mathbbm{1}_{A\setminus B'}\circ F^i-\mu_{\Delta}(A\setminus B')]\Big) \Big(\sum_{0\le i\le n-1}[\mathbbm{1}_{A
        }\circ F^i-\mu_{\Delta}(A)]\Big)d\mu_{\Delta}\Big|\nonumber\\
            & \le \int \Big(\sum_{0\le i\le n-1}[\mathbbm{1}_{A\setminus B'}\circ F^i-\mu_{\Delta}(A\setminus B')]\Big)^2d\mu_{\Delta}\nonumber\\
            &\quad +2\sum_{j=i\le n-1}\Big|\int [\mathbbm{1}_{A\setminus B'}\circ F^i-\mu_{\Delta}(A\setminus B')] [\mathbbm{1}_{A
        }\circ F^j-\mu_{\Delta}(A)]d\mu_{\Delta}\Big|\nonumber\\
            &\quad +2\sum_{i<j\le n-1}\Big|\int [\mathbbm{1}_{A\setminus B'}\circ F^i-\mu_{\Delta}(A\setminus B')] [\mathbbm{1}_{A
        }\circ F^j-\mu_{\Delta}(A)]d\mu_{\Delta}\Big|\nonumber\\
            &\quad +2\sum_{j<i\le n-1}\Big|\int [\mathbbm{1}_{A\setminus B'}\circ F^i-\mu_{\Delta}(A\setminus B')] [\mathbbm{1}_{A}\circ F^j-\mu_{\Delta}(A)]d\mu_{\Delta}\Big|\nonumber\\
         &\le \int \Big(\sum_{0\le i\le n-1}[\mathbbm{1}_{A\setminus B'}\circ F^i-\mu_{\Delta}(A\setminus B')]\Big)^2d\mu_{\Delta}+4n\mu_{\Delta}(A_2)\nonumber\\
        & \quad +2n\sum_{i \ge 1}\Big|\int [\mathbbm{1}_{A\setminus B'}-\mu_{\Delta}(A\setminus B')][\mathbbm{1}_{A}-\mu_{\Delta}(A)]\circ F^{i}d\mu_{\Delta}\Big|\nonumber\\
            &\quad +2 \sum_{i\le n-1}\sum_{j> n-1-i}\Big|\int [\mathbbm{1}_{A\setminus B'}-\mu_{\Delta}(A\setminus B')][\mathbbm{1}_{A}-\mu_{\Delta}(A)]\circ F^{j}d\mu_{\Delta}\Big|\nonumber\\
        & \quad +2n\sum_{i \ge 1}\Big|\int [\mathbbm{1}_{A\setminus B'}-\mu_{\Delta}(A\setminus B')]\circ F^{i}[\mathbbm{1}_{A}-\mu_{\Delta}(A)]d\mu_{\Delta}\Big| \nonumber\\
            &\quad +2 \sum_{i\le n-1}\sum_{j> n-1-i}\Big|\int [\mathbbm{1}_{A\setminus B'}-\mu_{\Delta}(A\setminus B')]\circ F^{j}[\mathbbm{1}_{A}-\mu_{\Delta}(A)]d\mu_{\Delta}\Big|. \label{100}
            \end{align} where $\mu_{\Delta}(A_2)$ is due to Lemma \ref{boundarymeasure} and $\mu_{\Delta}(A\setminus B')\le \mu_{\Delta}(A_2)$. Now we estimate (\ref{5}) and (\ref{100}). Take (\ref{5}) for example. \begin{align}
            &|\int [\mathbbm{1}_{A\setminus B'}-\mu_{\Delta}(A\setminus B')][\mathbbm{1}_{A\setminus B'}-\mu_{\Delta}(A\setminus B')]\circ F^{n}d\mu_{\Delta}| \nonumber\\
            &=|\int \mathbbm{1}_{A\setminus B'}\mathbbm{1}_{A\setminus B'}\circ F^{n}d\mu_{\Delta}-(\int \mathbbm{1}_{A\setminus B'}d\mu_{\Delta})^2|\nonumber\\
            &=|\int \mathbbm{1}_{F^{-m}(A\setminus B')}\mathbbm{1}_{F^{-m}(A\setminus B')}\circ F^{n}d\mu_{\Delta}-(\int \mathbbm{1}_{F^{-m}(A\setminus B')}d\mu_{\Delta})^2| \label{4}
        \end{align} ($m$ to be determined). Cover $F^{-m}(A\setminus B')$ with a family of $ \xi_{2m+2k,i} $ where $\xi_{2m+2k,i}\in \mathcal{Q}_{2(m+k)}$, collect the $\xi_{2m+2k,i}$, which is not totally contained in $F^{-m}(A\setminus B')$, denote them by $\partial \xi_{2m+2k,i}$.

        \textbf{Claim:} for each $\xi_{2m+2k,i}$, $F^m\xi_{2m+2k,i}$ is contained in one $\partial \xi_{2k,j}$, and $\mu_{\Delta}(\bigcup_l \partial\xi_{2m+2k,l}) \precsim (m+k)^{-\alpha}\Leb_{\partial B}(\partial B)$. 

        Here is the proof: note that $F^m \xi_{2m+2k,i} \bigcap \bigcup_i \partial \xi_{2k,i} \neq \emptyset$, say, $F^m \xi_{2m+2k,i} \bigcap \partial \xi_{2k,j} \neq \emptyset$ for some $j$. Also note that  $\widetilde{F}^m \widetilde{\xi}_{2m+2k,i}\in \widetilde{\mathcal{Q}}_{m+2k}$, so $\widetilde{F}^m \widetilde{\xi}_{2m+2k,i} \subseteq \partial \widetilde{\xi}_{2k,j}$. This implies $F^m \xi_{2m+2k,i} \subseteq \partial \xi_{2k,j}$.
        
        Besides, by (\ref{0}), $\diam (\pi F^m F^k \partial \xi_{2m+2k,i})\precsim (m+k)^{-\alpha}$. By definition $\pi F^m F^k \partial \xi_{2m+2k,i}\bigcap B \neq \emptyset$ and $\pi F^m F^k \partial \xi_{2m+2k,i}\bigcap B^c \neq \emptyset$. Hence $\mu_{\Delta}(\bigcup_i \partial\xi_{2m+2k,i})\le \mu_{\Delta}( F^{-m-k}\pi^{-1}\pi F^m F^k  \bigcup_i \partial\xi_{2m+2k,i})=\mu_{\mathcal{M}}(\pi F^m F^k  \bigcup_i \partial\xi_{2m+2k,i})\precsim \Leb_{\partial B}(\partial B) (m+k)^{-\alpha}$. So this claim is concluded.
        
        Together with Lemma \ref{youngmixing} we can continue to estimate (\ref{4}). For any $n> 4k$, choose $m=\lfloor n/4 \rfloor-k$,
        \begin{align*}
            &\le |\int [\mathbbm{1}_{\bigcup_i \xi_{2m+2k,i}}\mathbbm{1}_{\bigcup_i \xi_{2m+2k,i}}\circ F^{n}d\mu_{\Delta}-(\int [\mathbbm{1}_{\bigcup_i \xi_{2m+2k,i}}d\mu_{\Delta})^2|+O(\mu_{\Delta}(\bigcup_i \partial \xi_{2m+2k,i}))\\
            &\precsim (n-2m-2k)^{-\xi+1}+(m+k)^{-\alpha} \precsim n^{-\xi+1}+n^{-\alpha} \precsim n^{-\min\{\xi-1,\alpha\}}.
        \end{align*}
        
        When $n\le 4k$,  (\ref{4})$=O(\mu_{\Delta}(A_2))=O(k^{-\alpha})$. Therefore, \[\sum_{j\ge 1}\Big|\int [\mathbbm{1}_{A\setminus B'}-\mu_{\Delta}(A\setminus B')][\mathbbm{1}_{A\setminus B'}-\mu_{\Delta}(A\setminus B')]\circ F^{j}d\mu_{\Delta}\Big|
            \precsim \sum_{i > 4k}i^{-\min\{\xi-1,\alpha\}} +2\sum_{i \le 4k} k^{-\alpha} \precsim k^{-\min\{\xi-2,\alpha-1\}}.\] 

            To address (\ref{100}), the arguments are the same, but cover $F^{-m}(A)$ by another family of $\xi'_{2m+2k,i}$ with $\partial \xi'_{2m+2k,i}=\emptyset$. So the same bounds are obtained as follows. 
            \[\sum_{j\ge 1}\Big|\int [\mathbbm{1}_{A}-\mu_{\Delta}(A)][\mathbbm{1}_{A\setminus B'}-\mu_{\Delta}(A\setminus B')]\circ F^{j}d\mu_{\Delta}\Big|
           \precsim k^{-\min\{\xi-2,\alpha-1\}},\] 
            \[\sum_{j\ge 1}\Big|\int [\mathbbm{1}_{A\setminus B'}-\mu_{\Delta}(A\setminus B')][\mathbbm{1}_{A}-\mu_{\Delta}(A)]\circ F^{j}d\mu_{\Delta}\Big|
            \precsim k^{-\min\{\xi-2,\alpha-1\}}.\]

            Hence (\ref{5}) and (\ref{100}) can be estimated as 
        \begin{align*}
            (\ref{5}) &\le  \int [\mathbbm{1}_{A\setminus B'}-\mu_{\Delta}(A\setminus B')]^2 d\mu_{\Delta}+2\sum_{i \ge 1}\Big|\int [\mathbbm{1}_{A\setminus B'}-\mu_{\Delta}(A\setminus B')][\mathbbm{1}_{A\setminus B'}-\mu_{\Delta}(A\setminus B')]\circ F^{i}d\mu_{\Delta}\Big|\\
            &\quad +2n^{-1} \sum_{i\le n-1}\sum_{j> n-1-i}\Big|\int [\mathbbm{1}_{A\setminus B'}-\mu_{\Delta}(A\setminus B')][\mathbbm{1}_{A\setminus B'}-\mu_{\Delta}(A\setminus B')]\circ F^{j}d\mu_{\Delta}\Big|\\
            &\precsim  k^{-\alpha}+ k^{-\min\{\xi-2,\alpha-1\}}+n^{-1} n k^{-\min\{\xi-2,\alpha-1\}}\precsim  k^{-\min\{\xi-2,\alpha-1\}}.
\end{align*}
\begin{align*}
   (\ref{100})&\le \int \Big(\sum_{0\le i\le n-1}[\mathbbm{1}_{A\setminus B'}\circ F^i-\mu_{\Delta}(A\setminus B')]\Big)^2d\mu_{\Delta}+nO(\mu_{\Delta}(A_2))+8n k^{-\min\{\xi-2,\alpha-1\}} \nonumber\\
   &\precsim n k^{-\min\{\xi-2,\alpha-1\}}+n k^{-\alpha}+8n k^{-\min\{\xi-2,\alpha-1\}}\precsim n k^{-\min\{\xi-2,\alpha-1\}}
\end{align*}which concludes the proof.
\end{proof} 

\subsection*{Step \ref{severalvariances}:  prove the existence of several variances and estimate their differences.}

\begin{lemma}\label{variance}
Recall from Lemma \ref{variancesigma} that $\sigma^2=\lim_{n \to \infty}\int \Big(\frac{\sum_{0\le i\le n-1}[\mathbbm{1}_{B}\circ f^i-\int \mathbbm{1}_{B} d\mu_{\mathcal{M}}]}{\sqrt{n}}\Big)^2d\mu_{\mathcal{M}}$. Similarly, the limit 
   \[\sigma_k^2:=\lim_{n \to \infty}\int \Big(\frac{\sum_{0\le i\le n-1}[\mathbbm{1}_{A}\circ F^i-\int \mathbbm{1}_{A} d\mu_{\Delta}]}{\sqrt{n}}\Big)^2d\mu_{\Delta}\]exists
   and the estimates $\sigma_k^2-\sigma^2=O(k^{-\min\{\xi-2,\alpha-1\}})$,
    \[\int \Big(\frac{\sum_{0\le i\le n-1}[\mathbbm{1}_{A}\circ F^i-\int \mathbbm{1}_{A} d\mu_{\Delta}]}{\sqrt{n}}\Big)^2d\mu_{\Delta}-\sigma_k^2=O( k^2 n^{-\min\{\xi-2,1\}}\log n)\] hold where the constants in $O(\cdot)$ do not depend on $n,k$.
\end{lemma}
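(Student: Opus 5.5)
Set $g:=\mathbbm{1}_A-\mu_{\Delta}(A)$, where $A\in\sigma(\mathcal{Q}_{2k})$. Everything reduces to a bound on the correlations $c_j:=\int g\, g\circ F^j d\mu_{\Delta}$. Apply the first estimate of Lemma \ref{youngmixing} with $m=2k$ and $A$, $\xi'=A$. For $j>2k$ this gives $|c_j|\precsim (j-2k)^{1-\xi}$, and trivially $|c_j|\le 1$ for all $j$. For a bound uniform in $j\ge1$, use the third estimate of Lemma \ref{youngmixing} instead: $|c_j|\precsim k^{\xi-1}j^{1-\xi}$. Since $\xi>2$, $\sum_j |c_j|<\infty$, so
\[\sigma_k^2:=c_0+2\sum_{j\ge1}c_j\]
is a well-defined finite number.

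Expand the square as in Lemma \ref{variancesigma}:
\[\int\Big(\frac{\sum_{i<n} g\circ F^i}{\sqrt n}\Big)^2d\mu_\Delta=c_0+2\sum_{j=1}^{n-1}\Big(1-\frac jn\Big)c_j .\]
This gives existence of the limit and
\[\Big|\int\Big(\frac{\sum_{i<n} g\circ F^i}{\sqrt n}\Big)^2d\mu_\Delta-\sigma_k^2\Big|\le 2\sum_{j\ge n}|c_j|+\frac2n\sum_{j<n}j|c_j|.\]
We bound the two tails separately.
\begin{itemize}
\item For the split sum, use $|c_j|\le1$ for $j\le 4k$ and $|c_j|\precsim (j-2k)^{1-\xi}\precsim j^{1-\xi}$ for $j>4k$. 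Then
\[\frac1n\sum_{j<n}j|c_j|\precsim \frac{k^2}{n}+\frac1n\sum_{j<n}j^{2-\xi}\precsim \frac{k^2}{n}+n^{-\min\{\xi-2,1\}}\log n .\]
\item For the remaining tail, use the uniform bound, which gives $\sum_{j\ge n}|c_j|\precsim k^{\xi-1}n^{2-\xi}$. If we want to stay within the claimed form, we instead use $|c_j|\precsim j^{1-\xi}$ for $n>4k$, and for $n\le 4k$ note that $k^2n^{-\min\{\xi-2,1\}}\ge 1$ already dominates the trivially bounded left side.
\end{itemize}
Together these give $O(k^2 n^{-\min\{\xi-2,1\}}\log n)$.

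For $\sigma_k^2-\sigma^2$, let $B'=F^{-k}\pi^{-1}B$ and $h=\mathbbm{1}_{B'}-\mu_\Delta(B')$. Since $\mu_{\mathcal{M}}=\pi_*\mu_\Delta$ and $F$ preserves $\mu_\Delta$, the normalized variance of the $h$-sums equals the normalized variance of the $\mathbbm{1}_B$-sums. By Lemma \ref{variancesigma} the latter tends to $\sigma^2$. By the second estimate of Lemma \ref{approximation}, after dividing by $n$, the two normalized variances (of the $g$-sums and $h$-sums) differ by $O(k^{-\min\{\xi-2,\alpha-1\}})$ uniformly in $n$. Letting $n\to\infty$, with both limits existing, yields $|\sigma_k^2-\sigma^2|\precsim k^{-\min\{\xi-2,\alpha-1\}}$. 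The constants involved come only from Lemma \ref{youngmixing}, Lemma \ref{approximation} and Lemma \ref{boundarymeasure}, so they are independent of $n$ and $k$.

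The main obstacle is the bookkeeping of the $k$-dependence in the correlation bound near $j\approx 2k$. Lemma \ref{youngmixing} only controls $c_j$ for $j>2k$, so the first $O(k)$ correlations must be bounded trivially. This is exactly the source of the $k^2$ factor, and the split at $4k$ has to be handled carefully so that no worse power of $k$ appears.
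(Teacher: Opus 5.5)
Your proposal is correct and follows essentially the same route as the paper. It uses Lemma \ref{youngmixing} with $m=2k$ for lags beyond $4k$, the trivial bound $|c_j|\le 1$ below $4k$ (the source of the $k^2/n$ term), and the second estimate of Lemma \ref{approximation}, divided by $n$ and passed to the limit, for $\sigma_k^2-\sigma^2$.

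One bookkeeping correction: for $n\le 4k$ the left side is only trivially $O(k)$, not $O(1)$ (e.g.\ $\sum_{j\ge n}|c_j|\le 4k+O(1)$). This is still dominated, since $k^2n^{-\min\{\xi-2,1\}}\ge k/4$ in that range.
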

\begin{proof}
By Lemma \ref{youngmixing}, when $i\ge 4k $, $|\int [\mathbbm{1}_{A}-\mu_{\Delta}(A)][\mathbbm{1}_{A}-\mu_{\Delta}(A)]\circ F^{i}d\mu_{\Delta}|\precsim (i-2k)^{-\xi+1} \precsim i^{-\xi+1}$. Hence $\sum_{i \ge 1}\int [\mathbbm{1}_{A}-\mu_{\Delta}(A)][\mathbbm{1}_{A}-\mu_{\Delta}(A)]\circ F^{i}d\mu_{\Delta}$ converges. If $i\ge 4k$, $|\sum_{j \ge i}\int [\mathbbm{1}_{A}-\mu_{\Delta}(A)][\mathbbm{1}_{A}-\mu_{\Delta}(A)]\circ F^{j}d\mu_{\Delta}|\precsim \sum_{j \ge i}j^{-\xi+1}\precsim i^{-\xi+2} \to 0$. If $i\le 4k$, $|\sum_{j \ge i}\int [\mathbbm{1}_{A}-\mu_{\Delta}(A)][\mathbbm{1}_{A}-\mu_{\Delta}(A)]\circ F^{j}d\mu_{\Delta}|\precsim  k^{-\xi+2} +4k=O(k)$. Therefore

\begin{align*}
    &\lim_{n \to \infty}\int \Big(\frac{\sum_{0\le i\le n-1}[\mathbbm{1}_{A}\circ F^i-\int \mathbbm{1}_{A} d\mu_{\Delta}]}{\sqrt{n}}\Big)^2d\mu_{\Delta}\\
    &=\int [\mathbbm{1}_{A}-\mu_{\Delta}(A)]^2 d\mu_{\Delta}+2\sum_{i \ge 1}\int [\mathbbm{1}_{A}-\mu_{\Delta}(A)][\mathbbm{1}_{A}-\mu_{\Delta}(A)]\circ F^{i}d\mu_{\Delta}\\
    &\quad -2\lim_{n \to \infty}n^{-1} \sum_{i\le n-1}\sum_{j> n-1-i}\int [\mathbbm{1}_{A}-\mu_{\Delta}(A)][\mathbbm{1}_{A}-\mu_{\Delta}(A)]\circ F^{j}d\mu_{\Delta}\\
    &=\int [\mathbbm{1}_{A}-\mu_{\Delta}(A)]^2 d\mu_{\Delta}+2\sum_{i \ge 1}\int [\mathbbm{1}_{A}-\mu_{\Delta}(A)][\mathbbm{1}_{A}-\mu_{\Delta}(A)]\circ F^{i}d\mu_{\Delta}=:\sigma_k^2.
\end{align*} 

The approximation error is 
\begin{align*}
    &\int \Big(\frac{\sum_{0\le i\le n-1}[\mathbbm{1}_{A}\circ F^i-\int \mathbbm{1}_{A} d\mu_{\Delta}]}{\sqrt{n}}\Big)^2d\mu_{\Delta}-\sigma_k^2\\
    &=-2n^{-1} \sum_{i\le n-1}\sum_{j> i}\int [\mathbbm{1}_{A}-\mu_{\Delta}(A)][\mathbbm{1}_{A}-\mu_{\Delta}(A)]\circ F^{j}d\mu_{\Delta}\\
    &=-2n^{-1} \sum_{4k\le i\le n-1}\sum_{j> i}\int [\mathbbm{1}_{A}-\mu_{\Delta}(A)][\mathbbm{1}_{A}-\mu_{\Delta}(A)]\circ F^{j}d\mu_{\Delta}\\
    &\quad -2n^{-1} \sum_{i \le 4k}\sum_{j> i}\int [\mathbbm{1}_{A}-\mu_{\Delta}(A)][\mathbbm{1}_{A}-\mu_{\Delta}(A)]\circ F^{j}d\mu_{\Delta}\\
    &=n^{-1} \sum_{4k\le i\le n-1}O(i^{-\xi+2})+n^{-1} \sum_{ i\le 4k}O(k)\\
    &\precsim n^{-\min\{\xi-2,1\}}\log n +k^2n^{-1}\precsim k^2 n^{-\min\{\xi-2,1\}}\log n.
\end{align*} 

It remains to estimate $\sigma_k^2-\sigma^2$. It follows from Lemma \ref{approximation} that 
\begin{align*}
    &\Big|\int \Big(\frac{\sum_{0\le i\le n-1}[\mathbbm{1}_{B}\circ f^i-\int \mathbbm{1}_{B} d\mu_{\mathcal{M}}]}{\sqrt{n}}\Big)^2d\mu_{\mathcal{M}}-\int \Big(\frac{\sum_{0\le i\le n-1}[\mathbbm{1}_{A}\circ F^i-\int \mathbbm{1}_{A} d\mu_{\Delta}]}{\sqrt{n}}\Big)^2d\mu_{\Delta}\Big|\\
    &\precsim k^{-\min\{\xi-2,\alpha-1\}}.
\end{align*}

Taking $n\to \infty$, we conclude the estimate $|\sigma_k^2-\sigma^2|\precsim  k^{-\min\{\xi-2,\alpha-1\}}$.
\end{proof}

\subsection*{Step \ref{verifysutams}: verify the conditions (A4)-(A6) in \cite{Sutams}.}  
Now we collapse $A$ to $\widetilde{A}$ along stable manifolds and let $\psi_k:=\mathbbm{1}_{\widetilde{A}}-\mu_{\widetilde{\Delta}}(\widetilde{A})$. Suppose that the transfer operator of $\widetilde{F}$ is $P$ and the Jacobian of $\widetilde{F}$ (w.r.t. $\mu_{\widetilde{\Delta}}$) is $J\widetilde{F}$. According to \cite{Y}, the bounded distortion similar to the one in section \ref{cmz} still holds: for any $z_1,z_2 $ in the same element of $\widetilde{\mathcal{Q}}_1$, $
\big|\log \frac{|J\widetilde{F}(z_1)|}{|J\widetilde{F}(z_2)|}\big|
\le C \, \beta^{\, s(\widetilde{F}(z_1),\, \widetilde{F}(z_2))}$. We now verify (A4)-(A6) in \cite{Sutams}.

\begin{lemma}\label{a3toa6} 
 For any $j\ge 0,n,k>0$, 
\[\int |P^n( \psi_k)| d\mu_{\widetilde{\Delta}} \precsim k^{\xi-1} n^{-\xi+1}, \quad  \int |P^n(\psi_k^2-\int \psi_k^2 d\mu_{\widetilde{\Delta}})| d\mu_{\widetilde{\Delta}} \precsim k^{\xi-1}n^{-\xi+1},\]
\[\int \Big|P^n\Big[P^j(\psi_k) \psi_k - \int P^j(\psi_k)  \psi_k d\mu_{\widetilde{\Delta}}\Big]\Big| d\mu_{\widetilde{\Delta}} \precsim k^{\xi-1}n^{-\xi+1}\]where the constants in $\precsim$ do not depend on $j,n,k$.
\end{lemma}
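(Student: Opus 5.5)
The plan is to reduce all three estimates to the $L^1$ decay of $P^n$ applied to mean-zero functions that are measurable with respect to a fixed finite-level partition of $\widetilde{\Delta}$. For such functions Lemma \ref{youngmixing} (the second and fourth inequalities) applies once we pass through duality. The basic observation is this. If $g$ is $\sigma(\widetilde{\mathcal{Q}}_{M})$-measurable, bounded, and has $\int g\,d\mu_{\widetilde{\Delta}}=0$, then
\[
\int |P^n g|\,d\mu_{\widetilde{\Delta}}=\sup_{\|h\|_\infty\le 1}\int g\, h\circ \widetilde{F}^n\, d\mu_{\widetilde{\Delta}} .
\]
Since $\widetilde{\mathcal{Q}}_k$ refines to the full $\sigma$-algebra, we may take $h=\mathrm{sgn}(P^n g)$ and approximate it by simple functions measurable with respect to $\sigma(\bigcup_i\widetilde{\mathcal{Q}}_i)$. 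We then write $g$ as a linear combination of indicators of elements of $\widetilde{\mathcal{Q}}_M$. Because $|g|$ is bounded, summing the fourth inequality of Lemma \ref{youngmixing} over atoms weighted by $\mu(A)$ gives
\[
\int|P^n g|\,d\mu_{\widetilde{\Delta}}\precsim \|g\|_\infty M^{\xi-1}n^{1-\xi}.
\]
To do this cleanly I would split $h$ into positive and negative level sets, $h=\mathbbm{1}_{\{h>0\}}-\mathbbm{1}_{\{h<0\}}$ up to approximation, and use $\int g\,d\mu=0$ to subtract $\mu(\cdot)\int h$. The bound must come from summing $|\int \mathbbm{1}_{Q}\mathbbm{1}_{\xi'}\circ\widetilde{F}^n-\mu(Q)\mu(\xi')|\le C M^{\xi-1}n^{1-\xi}\mu(Q)$ over atoms $Q$. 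The factor $\mu(Q)$ is what makes the sum converge to a bound independent of the number of atoms.

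\textbf{First estimate.} Here $\psi_k=\mathbbm{1}_{\widetilde A}-\mu(\widetilde A)$ with $\widetilde A\in\sigma(\widetilde{\mathcal{Q}}_{2k})$ and $\|\psi_k\|_\infty\le 1$. The observation above with $M=2k$ gives $k^{\xi-1}n^{1-\xi}$ directly.

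\textbf{Second estimate.} Since $\psi_k^2=(1-2\mu(\widetilde A))\mathbbm{1}_{\widetilde A}+\mu(\widetilde A)^2$, we have
\[
\psi_k^2-\int\psi_k^2\,d\mu_{\widetilde{\Delta}}=(1-2\mu(\widetilde A))\psi_k .
\]
So it reduces to the first estimate.

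\textbf{Third estimate.} This is the main obstacle, because $P^j\psi_k$ is no longer $\widetilde{\mathcal{Q}}_{2k}$-measurable. I would use the identity $P^n[(P^j\psi_k)\psi_k]$ tested against $h$, which equals
\[
\int (P^j\psi_k)\,\psi_k\, h\circ\widetilde F^n\,d\mu=\int \psi_k\,(\psi_k\circ\widetilde F^j)\,(h\circ\widetilde F^{n+j})\,d\mu .
\]
I would then treat two cases.

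\emph{Case $j\ge 2k$.} The function $\psi_k\circ\widetilde F^{j}$ is $\widetilde{\mathcal{Q}}_{j+2k}$-measurable, so this route would give a bad dependence on $j$. Instead I would use the first estimate to bound $\|P^j\psi_k\|_1\precsim k^{\xi-1}j^{1-\xi}$. Combined with the trivial bound this handles the regime $n\lesssim j$: $\|P^n[\cdot]\|_1\le 2\|P^j\psi_k\|_1\precsim k^{\xi-1}j^{1-\xi}\le k^{\xi-1}n^{1-\xi}$ when $j\ge n$.

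\emph{Case $j<n$.} Here I would write $G_j:=\psi_k\cdot\psi_k\circ\widetilde F^j-\int\psi_k\,\psi_k\circ\widetilde F^j$, which is $\widetilde{\mathcal{Q}}_{j+2k}$-measurable and bounded by $2$. The observation gives $\int|P^{n}(\cdot)|\precsim (j+2k)^{\xi-1}(n+j)^{1-\xi}$, after noting that $P^n[(P^j\psi_k)\psi_k-c]=P^{n+j}G_j$ in distribution against test functions. For $j<n$ we have $(j+2k)/(n+j)\precsim k/n$ whenever $j\le k$. For $k<j<n$ the ratio $(j+2k)/(n+j)$ is only $\le 3j/n$, which is not good enough. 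In that intermediate range I would instead split $\psi_k\circ\widetilde F^j$ with an independent-block argument: use the first inequality of Lemma \ref{youngmixing} with gap $j-2k$ to replace $\int\psi_k\,\psi_k\circ\widetilde F^j\,(h\circ\widetilde F^{n+j})$ by $\int \psi_k\cdot\int \psi_k\,h\circ\widetilde F^n=0$ plus an error $\precsim k^{\xi-1}j^{1-\xi}$. Combining this with the $P^{n+j}G_j$ bound, so that the minimum over both routes is $\precsim k^{\xi-1}n^{1-\xi}$, is the step I expect to require the most care. If it fails, I would accept a bound uniform in $j$ by taking the better of the two estimates in each range, tracking constants independent of $j,n,k$.
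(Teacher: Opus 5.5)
\textbf{Verdict: there is a genuine gap in the third estimate, for $k<j<n$.} Your first two estimates are fine and essentially match the paper's argument. The paper uses the $(p-m)^{1-\xi}$ form of Lemma \ref{youngmixing} for $n\ge 4k$ and the trivial bound for $n<4k$.

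Your reduction of the third estimate is correct, in fact as an exact identity: $P^n\big[(P^j\psi_k)\psi_k-c_j\big]=P^{n+j}G_j$, since $P^j(f\cdot g\circ\widetilde F^j)=g\,P^jf$. The problem is how you then bound $P^{n+j}G_j$.
\begin{itemize}
\item In the range $k<j<n$ your two bounds are $\precsim (j+2k)^{\xi-1}(n+j)^{1-\xi}$ and $\precsim k^{\xi-1}j^{1-\xi}$.
\item At $j\approx\sqrt{kn}$ both are of order $(k/n)^{(\xi-1)/2}$, which is much larger than $k^{\xi-1}n^{1-\xi}$.
\item The independent-block step does not help. Decoupling $\psi_k$ from $(\psi_k\,h\circ\widetilde F^{n})\circ\widetilde F^{j}$ across the gap $j$ costs $(j-2k)^{1-\xi}$, which is just the trivial bound $2\|P^j\psi_k\|_1$ again.
\item Your fallback (take the better bound in each range) therefore proves only the rate $(k/n)^{(\xi-1)/2}$. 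That is strictly weaker than the lemma.
\end{itemize}

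\textbf{The missing observation.} The lag $n+j$ exceeds the measurability depth $j+2k$ of $G_j$ by $n-2k$, independently of $j$. The $m^{\xi-1}p^{1-\xi}$ form of Lemma \ref{youngmixing} throws this away.

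Note that $G_j+c_j=\psi_k\cdot\psi_k\circ\widetilde F^j$ takes three values on sets in $\sigma(\widetilde{\mathcal Q}_{j+2k})$. Apply the $(p-m)^{1-\xi}$ form of Lemma \ref{youngmixing} on $\widetilde\Delta$ with $m=j+2k$ and $p=n+j$. This gives $\int|P^{n+j}G_j|\,d\mu_{\widetilde\Delta}\precsim (n-2k)^{1-\xi}\precsim n^{1-\xi}$ for $n\ge 4k$, uniformly in $j$. For $n<4k$ the trivial bound $O(1)\precsim k^{\xi-1}n^{1-\xi}$ suffices.

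\textbf{How the paper does it.} The paper proves this by hand. It shows that $P^{2k}\big((P^j\psi_k)\psi_k\big)=P^{j+2k}\big(\psi_k\cdot\psi_k\circ\widetilde F^j\big)$ is locally H\"older with a constant independent of $j$ and $k$. The reason is as follows:
\begin{itemize}
\item For $x_1,x_2$ in one element of $\widetilde{\mathcal Q}_1$, the paired preimages under $\widetilde F^{2k}$ and under $\widetilde F^{j+2k}$ share cylinders of depth at least $2k$.
\item $\psi_k$ is constant on such cylinders.
\item So only the Jacobian distortion contributes, giving a factor $\beta^{s(x_1,x_2)}$.
\end{itemize}
Young's polynomial decay for H\"older observables, applied over the remaining $n-2k$ iterates, then finishes the proof.

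\textbf{A further point.} Do not sum Lemma \ref{youngmixing} atom by atom relying on the weight $\mu(A)$. That weight cannot hold for every $A$ when $R_p$ is unbounded. For a counterexample, take $A$ to be the bottom level of a column of height greater than $p$, and $\xi'$ its $p$-th level. Instead, apply the unweighted bound to the at most three level sets of $\psi_k$ and of $G_j$; that is enough.
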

\begin{proof}
    Since $ \widetilde{A}=\bigcup_i \widetilde{\xi}_{2k,i} \in \sigma(\widetilde{\mathcal{Q}}_{2k})$. By Lemma \ref{youngmixing}, when $n \ge 4k$, $\int |P^n( \psi_k)| d\mu_{\widetilde{\Delta}} \precsim (n-2k)^{-\xi+1} \precsim n^{-\xi+1}$. When $n \le 4k$, $\int |P^n( \psi_k)| d\mu_{\widetilde{\Delta}}=O(1)$. So $\int |P^n( \psi_k)| d\mu_{\widetilde{\Delta}} \precsim k^{\xi-1}n^{-\xi+1}$ holds for all $n$.

    Since $\psi^2_k-\int \psi^2_k d\mu_{\widetilde{\Delta}}=\mathbbm{1}_{\widetilde{A}}-\mu_{\widetilde{\Delta}}(\widetilde{A})-2[\mathbbm{1}_{\widetilde{A}}-\mu_{\widetilde{\Delta}}(\widetilde{A})]\mu_{\widetilde{\Delta}}(\widetilde{A})$, Lemma \ref{youngmixing} gives $\int |P^n(\psi_k^2-\int \psi_k^2 d\mu_{\widetilde{\Delta}})| d\mu_{\widetilde{\Delta}} \precsim k^{\xi-1}n^{-\xi+1}$. 

    Now we prove the third inequality. For any $x_1,x_2$ belonging to the same element of $\widetilde{Q}_1$, \begin{align*}
        |P^{2k}(&P^j(\psi_k)\psi_k)(x_1)-P^{2k}(P^j(\psi_k)\psi_k)(x_2)|\\
        &=\Big|\sum_{\widetilde{F}^{2k}(y_1)=x_1}\frac{P^j(\psi_k)(y_1)\psi_k(y_1)}{|J\widetilde{F}^{2k}(y_1)|}-\sum_{\widetilde{F}^{2k}(y_2)=x_2}\frac{P^j(\psi_k)(y_2)\psi_k(y_2)}{|J\widetilde{F}^{2k}(y_2)|}\Big|\\
        &=\Big|\sum_{\widetilde{F}^{2k}(y_1)=x_1}\Big[\frac{\psi_k(y_1)}{|J\widetilde{F}^{2k}(y_1)|}-\frac{\psi_k(y_2)}{|J\widetilde{F}^{2k}(y_2)|}\Big] \sum_{\widetilde{F}^j(z_1)=y_1}\frac{\psi_k(z_1)}{|J\widetilde{F}^j(z_1)|}\\
        &\quad+\sum_{\widetilde{F}^{2k}(y_2)=x_2}\frac{\psi_k(y_2)}{|J\widetilde{F}^{2k}(y_2)|} \sum_{\widetilde{F}^j(z_2)=y_2}\Big[\frac{\psi_k(z_1)}{|J\widetilde{F}^j(z_1)|}-\frac{\psi_k(z_2)}{|J\widetilde{F}^j(z_2)|} \Big]\Big|
    \end{align*}where $y_1,y_2$ belong to the same element of $\widetilde{Q}_{2k+1}$ and $z_1,z_2$ belong to the same element of $\widetilde{Q}_{2k+j+1}$. Therefore, $\psi_k(y_1)=\psi_k(y_2), \psi_k(z_1)=\psi_k(z_2)$. By distortions and Markovian properties described in section \ref{cmz}, $\sum_{\widetilde{F}^j(z_1)=y_1}\frac{1}{|J\widetilde{F}^j(z_1)|}=O(1)$, $\sum_{\widetilde{F}^j(z_2)=y_2}\frac{1}{|J\widetilde{F}^j(z_2)|}=O(1)$ and $|\frac{1}{|J\widetilde{F}^{2k}(y_1)|}-\frac{1}{|J\widetilde{F}^{2k}(y_2)|}|\precsim \frac{1}{|J\widetilde{F}^{2k}(y_1)|}\beta^{s(\widetilde{F}^{2k}(y_1), \widetilde{F}^{2k}(y_2))} $, $|\frac{1}{|J\widetilde{F}^{j}(z_1)|}-\frac{1}{|J\widetilde{F}^{j}(z_2)|}|\precsim \frac{1}{|J\widetilde{F}^{j}(z_2)|}\beta^{s(\widetilde{F}^{j}(z_1), \widetilde{F}^{j}(z_2))}\precsim \frac{1}{|J\widetilde{F}^{j}(z_2)|}\beta^{s(x_1, x_2)}$. Hence we can continue our estimate
    \begin{align*}
        &\precsim \sum_{\widetilde{F}^{2k}(y_1)=x_1} \frac{1}{|J\widetilde{F}^{2k}(y_1)|}\beta^{s(\widetilde{F}^{2k}(y_1), \widetilde{F}^{2k}(y_2))}+\sum_{\widetilde{F}^{2k}(y_2)=x_2}\frac{1}{|J\widetilde{F}^{2k}(y_2)|} \sum_{\widetilde{F}^j(z_2)=y_2}\frac{1}{|J\widetilde{F}^{j}(z_2)|}\beta^{s(x_1,x_2)}\\
        &\precsim \beta^{s(x_1,x_2)} +\sum_{\widetilde{F}^{2k}(y_2)=x_2}\frac{1}{|J\widetilde{F}^{2k}(y_2)|} \beta^{s(x_1,x_2)} \precsim \beta^{s(x_1,x_2)}
    \end{align*} where the constant in $\precsim$ does not depend on $k,j$. Hence $P^{2k}(P^j(\psi_k)\psi_k)$ is a locally H\"older function. Applying the result of \cite{Y1}, when $n \ge 4k$, $\int |P^n[P^j(\psi_k) \psi_k - \int P^j(\psi_k)  \psi_k d\mu_{\widetilde{\Delta}}]| d\mu_{\widetilde{\Delta}}=O((n-2k)^{-\xi+1})=O(n^{-\xi+1})$. When $n\le 4k$, $\int |P^n[P^j(\psi_k) \psi_k - \int P^j(\psi_k)  \psi_k d\mu_{\widetilde{\Delta}}]| d\mu_{\widetilde{\Delta}}=O(1)$. Hence, for any $n \ge 1$,  \[\int |P^n[P^j(\psi_k) \psi_k - \int P^j(\psi_k)  \psi_k d\mu_{\widetilde{\Delta}}]| d\mu_{\widetilde{\Delta}}\precsim k^{\xi-1}n^{-\xi+1}\]where the constant in $\precsim$ does not depend on $j,n,k$.
\end{proof}

With the decay of correlations in Lemma \ref{a3toa6}, we can obtain the following moment inequalities in \cite{Sutams}. For convenience, we introduce some new notation. Let $\Psi_k:=\mathbbm{1}_{A}-\mu_{\Delta}(A)$, $\mathbb{\widetilde{E}}(\cdot):=\int (\cdot) d\mu_{\widetilde{\Delta}}$, $\mathbb{\widetilde{E}}_{n}(\cdot):=\mathbb{\widetilde{E}}[(\cdot)|(\widetilde{F})^{-n}\sigma(\bigcup_{i\ge 0} \mathcal{\widetilde{Q}}_{i})]$, $\mathbb{{E}}(\cdot):=\int (\cdot) d\mu_{{\Delta}}$, $\mathbb{{E}}_{n}(\cdot):=\mathbb{{E}}[(\cdot)|({F})^{-n}\sigma(\bigcup_{i\ge 0} \mathcal{Q}_{i})]$. 

\begin{lemma}\label{momentbound}
$\mathbb{\widetilde{E}}|\mathbb{\widetilde{E}}_{n+m} \sum_{i=m}^{n+m-1 } \psi_k\circ \widetilde{F}^i |\precsim k^{\xi-1}$, $\mathbb{{E}}|\mathbb{{E}}_{n+m} \sum_{i=m}^{n+m-1 } \Psi_k\circ {F}^i |\precsim k^{\xi-1}$,
\[\mathbb{\widetilde{E}}\Big|\mathbb{\widetilde{E}}_{n+m} \Big[\Big(\sum_{i=m }^{n+m-1}  \psi_k\circ \widetilde{F}^i\Big)^2 \Big]-\mathbb{\widetilde{E}} \Big[\Big(\sum_{i=m }^{n+m-1}  \psi_k\circ \widetilde{F}^i\Big)^2\Big] \Big|\precsim k^{\xi-1} n^{\frac{1}{\xi-1}},\]
\begin{align*}
    &\Big|\mathbb{\widetilde{E}} \Big(\sum_{i=m}^{n+m} \psi_k\circ \widetilde{F}^i\Big)\Big(\sum_{i \in m+n+\bigcup_{j}[p_j,q_j]}\psi_k\circ \widetilde{F}^i\Big) \Big|\\
    &\precsim k^{\xi-1}\max(n,\max_j q_j-\min_jp_j)^{\max\{3-\xi, 0\}}\log \max(n,\max_j q_j-\min_jp_j),
\end{align*} 
\[\mathbb{\widetilde{E}} \Big(\sum_{m \le i \le m+n} \psi_k \circ \widetilde{F}^i\Big)^2 \precsim k^{\xi-1} n,\quad  \mathbb{\widetilde{E}}\Big|\sum_{m\le i \le m+n} \psi_k \circ \widetilde{F}^i\Big|^{\frac{3\xi-4}{\xi-1}}\precsim k^{\frac{(3\xi-4)(\xi-1)}{\xi-2}} n^{\frac{3\xi-4}{2\xi-2}},\]

\[\mathbb{E}\Big|\mathbb{{E}}_{n+m} \Big[\Big(\sum_{i=m }^{n+m-1}  \Psi_k\circ {F}^i\Big)^2 \Big]-\mathbb{{E}} \Big[\Big(\sum_{i=m }^{n+m-1}  \Psi_k\circ {F}^i\Big)^2\Big] \Big|\precsim k^{\xi-1} n^{\frac{1}{\xi-1}}.\]
\begin{align*}
    &\Big|\mathbb{{E}} \Big(\sum_{i=m}^{n+m} \Psi_k\circ {F}^i\Big)\Big(\sum_{i\in m+n+\bigcup_{j}[p_j,q_j]}\Psi_k\circ {F}^i\Big) \Big|\\
    &\precsim k^{\xi-1}\max(n,\max_j q_j-\min_jp_j)^{\max\{3-\xi, 0\}}\log \max(n,\max_j q_j-\min_jp_j),
\end{align*}
\[\mathbb{{E}} \Big(\sum_{m \le i \le m+n} \Psi_k \circ {F}^i\Big)^2 \precsim k^{\xi-1} n,\quad  \mathbb{{E}}\Big|\sum_{m\le i \le m+n} \Psi_k \circ {F}^i\Big|^{\frac{3\xi-4}{\xi-1}}\precsim k^{\frac{(3\xi-4)(\xi-1)}{\xi-2}} n^{\frac{3\xi-4}{2\xi-2}},\]
where the constants in $\precsim$ do not depend on any $m,k,n\in \mathbb{N}$ or on the disjoint time intervals $[p_j,q_j]$. 

\begin{align*}
    &\Big|\mathbb{{E}} \Big(\sum_{i\in I} \Psi_k\circ {F}^i\Big)^2-\mathbb{{E}} \Big(\sum_{i\in J} \Psi_k\circ {F}^i\Big)^2\Big| \precsim k^{\xi-1}|I\setminus J|,
\end{align*}where the constant in $\precsim$ does not depend on $k$ and any finite subsets $J\subseteq I$ in $\mathbb{N}_0$. 
\end{lemma}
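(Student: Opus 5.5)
The plan is to feed the decay bounds of Lemma \ref{a3toa6} into the abstract moment machinery of \cite{Sutams}, keeping track of how the constants depend on $k$, and then to transfer the results from $\widetilde{\Delta}$ to $\Delta$.

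\textbf{Step 1: the abstract input.} Lemma \ref{a3toa6} gives, uniformly in $j$,
\[\int |P^n\psi_k|\,d\mu_{\widetilde{\Delta}} \le C_k n^{1-\xi}, \qquad C_k:=Ck^{\xi-1},\]
together with the same bound for the centred products $\psi_k^2$ and $P^j(\psi_k)\psi_k$. These are exactly conditions (A4)--(A6) of \cite{Sutams}, in the form of Remark 2.8 there, with the constant $C_k$. I will rerun the argument of \cite{Sutams} and check that every constant appears linearly in $C_k$, or as an explicit power of it.

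\textbf{Step 2: the bounds on $\widetilde{\Delta}$.} Each bound follows a standard route.
\begin{itemize}
\item Conditional expectation. By duality,
\[\widetilde{\mathbb{E}}\bigl|\widetilde{\mathbb{E}}_{n+m}\psi_k\circ\widetilde{F}^i\bigr| = \int |P^{n+m-i}\psi_k|\,d\mu_{\widetilde{\Delta}}.\]
Summing over $i$ and using $\sum_{\ell} \ell^{1-\xi}<\infty$ gives the $O(k^{\xi-1})$ bound.
\item Conditional second moment. Expand the square into diagonal terms and cross terms $\psi_k\circ\widetilde{F}^i\,\psi_k\circ\widetilde{F}^{i+j}$. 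Cross terms with $j\le L$ are handled by the second and third estimates of Lemma \ref{a3toa6}. Cross terms with $j>L$ are handled crudely by the first estimate, giving a contribution of order $k^{\xi-1}L^{2-\xi}$ per term. Optimising the cutoff $L$ produces the power $n^{1/(\xi-1)}$.
\item Covariances. The covariances of blocks separated in time are bounded by summing the correlations $\precsim k^{\xi-1}\ell^{1-\xi}$. When $\xi<3$ the double sum over gaps behaves like $N^{3-\xi}$, and a $\log N$ factor appears at the boundary case.
\item Second moment. The bound $\widetilde{\mathbb{E}}(\sum\psi_k\circ\widetilde{F}^i)^2\precsim k^{\xi-1}n$ is immediate from summability of the correlations.
\item Higher moment. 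The bound of order $\frac{3\xi-4}{\xi-1}$ comes from the martingale--coboundary decomposition in \cite{Sutams}. One writes $\psi_k=\chi_k+\omega_k-\omega_k\circ\widetilde{F}$ with $\omega_k=\sum_{\ell\ge1}P^\ell\psi_k$. Then one combines Burkholder's inequality for the reverse martingale part with the $L^q$ integrability of $\omega_k$. The worse power $k^{(3\xi-4)(\xi-1)/(\xi-2)}$ reflects interpolating $\|\omega_k\|_q$ between the $L^1$ decay $k^{\xi-1}n^{1-\xi}$ and the trivial $L^\infty$ bound $|P^\ell\psi_k|\le1$.
\end{itemize}

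\textbf{Step 3: transfer to $\Delta$.} $\Psi_k$ is constant along stable leaves, so $\Psi_k=\psi_k\circ\widetilde{\pi}_\Delta$. Since $(\widetilde{\pi}_\Delta)_*\mu_\Delta=\mu_{\widetilde{\Delta}}$ and $\widetilde{\pi}_\Delta\circ F=\widetilde{F}\circ\widetilde{\pi}_\Delta$, all unconditional quantities coincide with those on $\widetilde{\Delta}$.

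For the conditional versions, I use that $F^{-n}\sigma(\bigcup_i\mathcal{Q}_i)$ corresponds to $\widetilde{F}^{-n}\sigma(\bigcup_i\widetilde{\mathcal{Q}}_i)$ under $\widetilde{\pi}_\Delta$. This holds because the elements of $\mathcal{Q}_i$ are unions of stable leaves. Hence $\mathbb{E}_n(\cdot\circ\widetilde{\pi}_\Delta)=\widetilde{\mathbb{E}}_n(\cdot)\circ\widetilde{\pi}_\Delta$, and the bounds pass over.

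\textbf{Step 4: the last inequality.} Write
\[\sum_I=\sum_J+\sum_{I\setminus J}.\]
Expanding the square, it suffices to bound $\mathbb{E}(\sum_{I\setminus J})^2$ and $\mathbb{E}(\sum_J)(\sum_{I\setminus J})$. For each fixed $i\in I\setminus J$, the correlations of $\Psi_k\circ F^i$ with all other times sum to at most
\[\sum_{\ell\ge0}\min\{1,\,k^{\xi-1}\ell^{1-\xi}\}\precsim k^{\xi-1},\]
using the third and fourth bounds of Lemma \ref{youngmixing} as in Lemma \ref{a3toa6}. Summing over $i\in I\setminus J$ gives $k^{\xi-1}|I\setminus J|$.

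\textbf{Main obstacle.} I expect the hardest part to be the $\frac{3\xi-4}{\xi-1}$-moment bound. It requires checking that the Burkholder and Rosenthal-type steps in \cite{Sutams} depend on the decay constant only polynomially, and getting the precise exponent of $k$ right. I would first try to reproduce \cite{Sutams}'s proof verbatim with $C_k$ inserted, and only if necessary refine via the interpolation described above.
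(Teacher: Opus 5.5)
Your overall plan matches the paper's. Lemma \ref{a3toa6} supplies (A4)--(A6) of \cite{Sutams}, Lemmas 4.2--4.7 there are rerun with decay constant $k^{\xi-1}$ (the paper takes $\epsilon=\frac{\xi-2}{\xi-1}$ in Lemma 4.7), everything is transported to $\Delta$ via $\widetilde{\pi}_\Delta$, and the last inequality comes from summing correlations. The following parts of your sketch are sound:
the conditional first moment;
the conditional second moment, where the cutoff $L=n^{1/(\xi-1)}$ balances $k^{\xi-1}L$ against $k^{\xi-1}nL^{2-\xi}$;
the block covariances and the variance;
the transfer to $\Delta$;
and Step 4, via the third bound of Lemma \ref{youngmixing} with $m=2k$.

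The gap is the moment of order $q:=\frac{3\xi-4}{\xi-1}=2+\frac{\xi-2}{\xi-1}$, which fails as you describe it. (Also, $P\chi_k=0$ requires the sign convention $\psi_k=\chi_k+\omega_k\circ\widetilde{F}-\omega_k$.)
\begin{itemize}
\item Your route needs $\omega_k=\sum_{\ell\ge1}P^\ell\psi_k\in L^q$. Interpolation only gives $\|P^\ell\psi_k\|_q\le\|P^\ell\psi_k\|_1^{1/q}\precsim(k^{\xi-1}\ell^{1-\xi})^{1/q}$, which is summable iff $q<\xi-1$.
\item Since $q>2$, this fails for every $\xi\le3$. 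In fact $q>\xi-1$ iff $\xi^2-5\xi+5<0$, i.e.\ for all $\xi\in(2,\frac{5+\sqrt5}{2})$. This range contains $\xi\in(2.9,3)$, which the paper uses for the flower billiards.
\item This is not a lossy estimate. On the tower $\omega_k(x,l)=\omega_k(x,0)+\sum_{r<l}\psi_k(x,r)$, and for an indicator the column sum is generically of order $l$. So in general $\omega_k\notin L^2$ when $\xi<3$, and Burkholder for $\chi_k$ plus a bound on $\|\omega_k\|_q$ cannot produce $n^{q/2}$ there.
\end{itemize}

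Your explanation of the power of $k$ is also off. The power $k^{\frac{(3\xi-4)(\xi-1)}{\xi-2}}$ is exactly $(k^{\xi-1})^{(2+\epsilon)/\epsilon}$ with $\epsilon=\frac{\xi-2}{\xi-1}$. It reflects how the decay constant propagates through Lemma 4.7 of \cite{Sutams}, not an $L^1$--$L^\infty$ interpolation. Where interpolation does work, it gives $\|\omega_k\|_q\precsim k$ and hence the smaller power $k^{q}$.

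For this bound you have two options:
\begin{itemize}
\item Genuinely run that lemma with constant $k^{\xi-1}$, as the paper does.
\item Replace your mechanism with a projective moment inequality in reversed time, of Peligrad--Utev--Wu type. It only needs $\sum_j j^{-3/2}\|\sum_{\ell\le j}P^\ell\psi_k\|_q<\infty$. With the interpolated bound this holds precisely when $q<2(\xi-1)$, which is true for every $\xi>2$.
\end{itemize}
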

\begin{proof}
    In step \ref{verifysutams} Lemma \ref{a3toa6}, we have verified the conditions (A4)-(A6) in \cite{Sutams}; we now conclude these inequalities by applying Lemma \ref{a3toa6} and mimicking the arguments of Lemmas 4.2, 4.3, 4.4, 4.6 and 4.7 in \cite{Sutams}, where we take $\epsilon=\frac{\xi-2}{\xi-1}$ in Lemma 4.7 of \cite{Sutams}. The conclusions for $\Psi_k$ are due to $(\widetilde{\pi}_{\Delta})_{*}\mu_{\Delta}=\mu_{\widetilde{\Delta}}$ and $\widetilde{F} \circ \widetilde{\pi}_{\Delta} =\widetilde{\pi}_{\Delta}\circ F$. For the last inequality, note that
    \begin{align*}
    &\Big|\mathbb{{E}} \Big(\sum_{i\in I} \Psi_k\circ {F}^i\Big)^2-\mathbb{{E}} \Big(\sum_{i\in J} \Psi_k\circ {F}^i\Big)^2\Big|\le \Big|\mathbb{{E}} \Big(\sum_{i\in I\setminus J} \Psi_k\circ {F}^i\Big)^2\Big|+2\Big|\mathbb{{E}} \Big(\sum_{i\in I\setminus J} \Psi_k\circ {F}^i\Big)\Big(\sum_{i\in J} \Psi_k\circ {F}^i\Big)\Big|\\
    &\le 2\sum_{i \in I \setminus J}\sum_{j \in  I \setminus J}|\mathbb{{E}}\Psi_k\circ F^i \Psi_k \circ F^j|+2\sum_{i \in I \setminus J}\sum_{j \in  J}|\mathbb{{E}}\Psi_k\circ F^i \Psi_k \circ F^j| \\
     &=2\sum_{i \in I \setminus J}|\mathbb{{E}}\Psi_k^2|+2\sum_{i \in I \setminus J}\sum_{i \neq j \in  I \setminus J}|\mathbb{{E}}\Psi_k\circ F^i \Psi_k \circ F^j|+2\sum_{i \in I \setminus J}\sum_{j \in J}|\mathbb{{E}}\Psi_k\circ F^i \Psi_k \circ F^j| \precsim k^{\xi-1}|I\setminus J|
    \end{align*}where the last ``$\precsim$" is due to Lemma \ref{mixingrate} and $F$-invariance.
    
    \end{proof}

\subsection*{Step \ref{estimatepreclt}: employ the block methods in \cite{Sutams} to estimate}  

\[\int \exp\Big(it\frac{\sum_{0\le i\le n-1}[\mathbbm{1}_{A}\circ F^i-\int \mathbbm{1}_{A} d\mu_{\Delta}]}{\sqrt{n}}\Big)d\mu_{\Delta}=\int \exp\Big(it\frac{\sum_{0\le i\le n-1}\psi_k \circ \widetilde{F}^i}{\sqrt{n}}\Big)d\mu_{\widetilde{\Delta}}.\] 

We follow the proof of Corollary 3.1 of \cite{Sutams}: let $I_n=[0,n-1]\bigcap \mathbb{N}_0$, $a \in (1/2,1)$, $c_n: =\lfloor n^{(1-a)} \rfloor$. Define disjoint blocks $\{I_{n,i}, 1\le i \le c_n+1\}$ of consecutive nonnegative integers in $I_n$ as shown in the image below. $|I_{n,i}|=\lfloor n^{a} \rfloor$ for any $i\in [1, c_n]$, the last block $I_{n,c_n+1}:= I_n \setminus \bigcup_{1 \le i \le c_n} I_{n,i}$. Thus $|I_{n,c_n+1}|\le 3\lfloor n^{a} \rfloor$ and $\bigcup_{1 \le i \le c_n+1} I_{n,i}=I_n$. Let $X_{n, i}:= \sum_{j \in I_{n,i}} \psi_k \circ \widetilde{F}^j$ and $a_i:=\sum_{j\le i}|I_{n,j}|+1$, then we have

\begin{tikzpicture}[xscale=0.35, yscale=0.6, >=Stealth, every node/.style={font=\small}]
    \draw[thick, ->] (-2,0) -- (37,0) node[right] {$\mathbb{N}$};

    \foreach \x/\label in {0/, 8/a_1-1, 16/a_2-1, 24/a_{c_n}-1, 32/a_{c_n+1}-1} {
        \draw (\x, -0.2) -- (\x, 0.2);
        \node at (\x, -0.8) {$\label$};
    }

    \fill[blue!20] (0, -0.15) rectangle (8, 0.15);
    \fill[blue!20] (8, -0.15) rectangle (16, 0.15);
    \fill[blue!20] (24, -0.15) rectangle (32, 0.15);

    \node at (4, 0.7) {$I_{n,1}$};
    \node at (12, 0.7) {$I_{n,2}$};
    \node at (20, 0.7) {$\cdots$};
    \node at (28, 0.7) {$I_{n,c_{n}+1}$};

    \node at (4, -0.45) {$\lfloor n^{a} \rfloor$};
    \node at (12, -0.45) {$\lfloor n^{a} \rfloor$};
    \node at (28, -0.45) {$\le 3\lfloor n^{a} \rfloor$};

    \draw[thick, decoration={brace, amplitude=2.7pt, mirror}, decorate] (0, -1.1) -- (32, -1.1);
    \node at (20, -1.5) {$I_n = [0, n-1]$};

    \draw[thin] (8, -0.15) -- (8, 0.15);
    \draw[thin] (16, -0.15) -- (16, 0.15);
    \draw[thin] (24, -0.15) -- (24, 0.15);
    \draw[thin] (32, -0.15) -- (32, 0.15);

\end{tikzpicture}

\begin{lemma}\label{twoexpdiff}
    \[ \exp\Big(-\frac{t^2}{2}\sum_{1\le i\le c_n+1} \mathbb{\widetilde{E}} \Big( \frac{X_{n,i}}{\sqrt{n}}\Big)^2\Big)-\exp\Big(- \frac{t^2}{2} \mathbb{\widetilde{E}} \Big(\frac{\sum_{1\le i\le c_n+1}X_{n,i}}{\sqrt{n}}\Big)^2 \Big)=t^2 k^{\xi-1}O(n^{\max\{3-\xi, 0\}-a}\log n)\]where the constant in $O(\cdot)$ does not depend on $t,k,n$. 
\end{lemma}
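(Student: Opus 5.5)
The plan is to use the elementary inequality $|e^{-x}-e^{-y}|\le |x-y|$ for $x,y\ge 0$. Both exponents are nonnegative, since they are $\frac{t^2}{2}$ times sums of second moments. The difference of the exponentials is therefore bounded by
\[
\frac{t^2}{2n}\Big|\mathbb{\widetilde{E}}\Big(\sum_{1\le i\le c_n+1}X_{n,i}\Big)^2-\sum_{1\le i\le c_n+1}\mathbb{\widetilde{E}}X_{n,i}^2\Big| \le \frac{t^2}{n}\sum_{1\le i\le c_n}\Big|\mathbb{\widetilde{E}}\, X_{n,i}\sum_{i<j\le c_n+1}X_{n,j}\Big|,
\]
after expanding the square and writing the off-diagonal part as $2\sum_i \mathbb{\widetilde{E}} X_{n,i}\big(\sum_{j>i}X_{n,j}\big)$.

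Each cross term is handled by the fourth (covariance) inequality of Lemma \ref{momentbound}. For fixed $i\le c_n$, take $m=a_{i-1}-1$ (the start of block $I_{n,i}$) and the block length $|I_{n,i}|=\lfloor n^a\rfloor$. The later blocks $I_{n,j}$, $j>i$, are consecutive intervals of the form $m+|I_{n,i}|+[p_j,q_j]$, so their union is a single interval of length at most $n$. The lemma then gives
\[
\Big|\mathbb{\widetilde{E}}\, X_{n,i}\sum_{j>i}X_{n,j}\Big|\precsim k^{\xi-1} n^{\max\{3-\xi,0\}}\log n ,
\]
uniformly in $i$, because $\max(n^a,\max_j q_j-\min_j p_j)\le n$. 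The constant is independent of $m$ and of the intervals, hence of $i$, $k$, $n$ and $t$.

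There is one index bookkeeping point: Lemma \ref{momentbound} is stated for $\sum_{i=m}^{n+m}$, which has one more term than the block. I would absorb the extra term either by using the block lengths as they are (the lemma is insensitive to off-by-one shifts) or by bounding the single extra summand's contribution through $|\psi_k|\le 1$ and the summable correlations from Lemma \ref{a3toa6}. That contribution is $O(k^{\xi-1})$, which is dominated.

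Summing over the $c_n\le n^{1-a}$ blocks gives a total of $t^2 n^{-1}\, n^{1-a}\, k^{\xi-1} n^{\max\{3-\xi,0\}}\log n = t^2k^{\xi-1}n^{\max\{3-\xi,0\}-a}\log n$, which is the claim. The main obstacle is matching the covariance estimate of Lemma \ref{momentbound} to the block configuration without losing uniformity in $i$. If the direct application fails because the later blocks do not fit the required form, I would fall back on summing pairwise correlations $|\mathbb{\widetilde{E}}\psi_k\,\psi_k\circ\widetilde F^{l}|\precsim k^{\xi-1}l^{1-\xi}$ from Lemma \ref{a3toa6}. For each block this gives $\sum_{u\in I_{n,i}}\sum_{l\ge 1} \min(1,k^{\xi-1}l^{1-\xi})$, which is essentially the same bound up to the $\log n$ factor.
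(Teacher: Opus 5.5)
Your proposal is correct and is essentially the paper's proof. The paper also uses that $\exp$ is Lipschitz on $(-\infty,0]$ and writes the off-diagonal part as $2\sum_i\widetilde{\mathbb{E}}\,X_{n,i}\sum_{j>i}X_{n,j}$. It bounds each such term by $k^{\xi-1}O(n^{\max\{3-\xi,0\}}\log n)$ using the covariance estimate of Lemma~\ref{momentbound} with $\max(n^a,\max_j q_j-\min_j p_j)\le n$, then sums over the $\approx n^{1-a}$ blocks and divides by $n$.

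One caution about your fallback only. As written, summing $\min(1,k^{\xi-1}l^{1-\xi})$ over all $l\ge 1$ for every $u\in I_{n,i}$ costs $O(n^a k)$ per block, and after summing over blocks this gives $O(t^2k)$, with no decay in $n$. You need to restrict to $l$ at least the distance from $u$ to the right end of $I_{n,i}$. With that restriction the bound is $\precsim k^{\xi-1}n^{a\max\{3-\xi,0\}}\log n$ per block, which does give the claim.
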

\begin{proof} By Lemma \ref{momentbound},
  \begin{align*}
     \mathbb{\widetilde{E}}(\sum_{i\le c_n+1}X_{n,i})^2-\sum_{i \le c_n+1} \mathbb{\widetilde{E}} X_{n,i}^2&=2\sum_{i \le c_n+1}\mathbb{\widetilde{E}} X_{n,i}\sum_{i<j\le c_n+1}X_{n,j}\\
     &=\sum_{i \le c_n+1} k^{\xi-1}O(n^{\max\{3-\xi, 0\}}\log n)=k^{\xi-1}O(n^{1-a+\max\{3-\xi, 0\}}\log n),
 \end{align*} where the constant in $O(\cdot)$ does not depend on $t,k,n$. The proof is concluded by noting that $\exp(\cdot)$ is a Lipschitz function on $(-\infty,0]$.
\end{proof}

With this estimate, we now estimate for $\int \exp\Big(it\frac{\sum_{0\le i\le n-1}\psi_k \circ \widetilde{F}^i}{\sqrt{n}}\Big)d\mu_{\widetilde{\Delta}}$.
 \begin{lemma}\label{preclt}
\begin{align*}
    &\Big|\mathbb{\widetilde{E}} \exp\Big(it\frac{\sum_{0\le i\le n-1}\psi_k\circ \widetilde{F}^i}{\sqrt{n}}\Big)-\exp \Big(-\frac{t^2}{2}\mathbb{\widetilde{E}} \Big(\frac{\sum_{0\le i\le n-1}\psi_k\circ \widetilde{F}^i}{\sqrt{n}}\Big)^2\Big)\Big|\\
    &\precsim  k^{\xi-1}\frac{|t|}{n^{a-0.5}}+ \frac{t^2}{2n^{\frac{a(\xi-2)}{\xi-1}}} k^{\xi-1}+  \frac{t^4k^{2\xi-2}}{n^{1-a}}+ |t|^{\frac{3\xi-4}{\xi-1}} k^{\frac{(3\xi-4)(\xi-1)}{\xi-2}} n^{(1-a)\frac{2-\xi}{2\xi-2}}+t^2 k^{\xi-1}n^{\max\{3-\xi, 0\}-a}\log n
\end{align*}where the constant in $\precsim$ does not depend on $k,n$. 
 \end{lemma}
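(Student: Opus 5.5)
The plan is to follow the proof of Corollary 3.1 in \cite{Sutams}, a Bernstein-type block argument. We compare the characteristic function of the full sum with a product over blocks, then compare each block with a Gaussian. Write $S_n:=\sum_{i\le c_n+1}X_{n,i}$. We telescope the difference $\mathbb{\widetilde{E}}\exp(itS_n/\sqrt n)-\exp(-\frac{t^2}{2}\mathbb{\widetilde{E}}(S_n/\sqrt n)^2)$ into four pieces:
\begin{enumerate}
\item removing the last (long) block and small gaps, which produces the $k^{\xi-1}|t|n^{-(a-0.5)}$ term;
\item replacing $\mathbb{\widetilde{E}}\exp(itS_n/\sqrt n)$ by $\prod_i\mathbb{\widetilde{E}}\exp(itX_{n,i}/\sqrt n)$ via conditional expectations;
\item replacing each factor by $\exp(-\frac{t^2}{2}\mathbb{\widetilde{E}}(X_{n,i}/\sqrt n)^2)$ via a Taylor expansion;
\item passing from $\sum_i\mathbb{\widetilde{E}}X_{n,i}^2$ to $\mathbb{\widetilde{E}}S_n^2$.
\end{enumerate}
The last piece is exactly Lemma \ref{twoexpdiff} and gives the final term $t^2k^{\xi-1}n^{\max\{3-\xi,0\}-a}\log n$.

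For the decoupling step, we use the martingale-type structure along $\mathbb{\widetilde{E}}_n$. The $\sigma$-algebras $(\widetilde F)^{-n}\sigma(\bigcup\widetilde{\mathcal Q}_i)$ decrease, and $X_{n,j}$ for later blocks is measurable with respect to the tail $\sigma$-algebra at time $a_{j-1}$. We peel blocks off one at a time, starting from the latest. At each step we write
\[\mathbb{\widetilde{E}}\Big[e^{it\sum_{j<i}X_{n,j}/\sqrt n}\,\mathbb{\widetilde{E}}_{a_{i-1}}e^{itX_{n,i}/\sqrt n}\Big]\]
and compare $\mathbb{\widetilde{E}}_{a_{i-1}}e^{itX_{n,i}/\sqrt n}$ with $1-\frac{t^2}{2n}\mathbb{\widetilde{E}}_{a_{i-1}}X_{n,i}^2+\frac{it}{\sqrt n}\mathbb{\widetilde{E}}_{a_{i-1}}X_{n,i}+O(|tX_{n,i}/\sqrt n|^{q})$, where $q=\frac{3\xi-4}{\xi-1}\in(2,3)$.

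The conditional first and second moments are controlled by Lemma \ref{momentbound}:
\begin{itemize}
\item $\mathbb{\widetilde{E}}|\mathbb{\widetilde{E}}_{\cdot}X_{n,i}|\precsim k^{\xi-1}$, which summed over the $c_n$ blocks gives an error of order $|t|c_nk^{\xi-1}/\sqrt n$;
\item the conditional variance fluctuates by at most $k^{\xi-1}n^{a/(\xi-1)}$, which gives $t^2c_nk^{\xi-1}n^{a/(\xi-1)}/n=t^2k^{\xi-1}n^{-a(\xi-2)/(\xi-1)}$.
\end{itemize}
This matches the second term of the statement.

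The remainder is bounded with the $q$-th moment bound of Lemma \ref{momentbound}. Summed over blocks it gives
\[c_n|t|^qk^{q(\xi-1)/(\xi-2)}n^{aq/2}n^{-q/2}=|t|^{q}k^{\frac{(3\xi-4)(\xi-1)}{\xi-2}}n^{(1-a)(1-q/2)},\]
and $1-q/2=\frac{2-\xi}{2\xi-2}$, which is the fourth term. Replacing the product of $1-\frac{t^2}{2n}\mathbb{\widetilde{E}}X_{n,i}^2$ by exponentials costs $\sum_i(t^2\mathbb{\widetilde{E}}X_{n,i}^2/n)^2\precsim c_n(t^2k^{\xi-1}n^a/n)^2=t^4k^{2\xi-2}n^{a-1}$, using $\mathbb{\widetilde{E}}X_{n,i}^2\precsim k^{\xi-1}n^a$. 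This is the third term. Throughout we use $|e^{ix}-\sum_{j\le2}(ix)^j/j!|\le C|x|^q$ for $q\in[2,3]$, and the fact that all products of characteristic-type factors have modulus at most $1$, so that errors add.

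The main obstacle is the conditioning bookkeeping. We must ensure that after replacing the conditional moments by their unconditional values, the remaining factor $e^{it\sum_{j<i}X_{n,j}/\sqrt n}$ times a deterministic constant still has modulus at most $1$. This keeps the induction closed. We must also handle the last irregular block $I_{n,c_n+1}$, which we bound crudely by $|t|\,\mathbb{\widetilde{E}}|X_{n,c_n+1}|/\sqrt n\precsim |t|k^{(\xi-1)/2}n^{a/2-1/2}$; this is absorbed in the first term since $k\ge1$. We will first follow the order of peeling used in \cite{Sutams} verbatim, tracking only the new factors of $k$.
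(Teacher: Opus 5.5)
Your architecture is the paper's: blocks $I_{n,i}$, decoupling through the tail $\sigma$-algebras, a Taylor expansion with remainder $|x|^{q}$ for $q=\frac{3\xi-4}{\xi-1}$, Lemma \ref{momentbound}, and Lemma \ref{twoexpdiff}. Your bookkeeping of the second to fifth terms is correct. However, two steps fail as written.

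\textbf{First, the conditioning runs in the wrong direction.} The $\sigma$-algebras $\widetilde{\mathcal F}_m:=(\widetilde F)^{-m}\sigma(\bigcup_{i\ge0}\widetilde{\mathcal Q}_i)$ \emph{decrease}, and, as you note, $X_{n,i}$ is $\widetilde{\mathcal F}_{a_{i-1}}$-measurable.
\begin{itemize}
\item Hence $\widetilde{\mathbb E}_{a_{i-1}}e^{itX_{n,i}/\sqrt n}=e^{itX_{n,i}/\sqrt n}$, and your displayed identity says nothing.
\item The quantity you then need to be small is $\widetilde{\mathbb E}|\widetilde{\mathbb E}_{a_{i-1}}X_{n,i}|=\widetilde{\mathbb E}|X_{n,i}|$. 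This is comparable to $\sqrt{\widetilde{\mathbb E}X_{n,i}^2}\approx n^{a/2}$ (up to powers of $k$), not $k^{\xi-1}$. The accumulated error is therefore of order $|t|c_n n^{a/2}/\sqrt n\approx |t|n^{(1-a)/2}\to\infty$.
\item The factor you keep outside, $e^{it\sum_{j<i}X_{n,j}/\sqrt n}$, is not measurable with respect to the conditioning $\sigma$-algebra. So it could not be pulled out of a nontrivial conditional expectation anyway.
\end{itemize}
With a reverse filtration you must peel from the \emph{earliest} block:
$\widetilde{\mathbb E}\,e^{it\sum_{j\ge i}X_{n,j}/\sqrt n}=\widetilde{\mathbb E}\big[\widetilde{\mathbb E}(e^{itX_{n,i}/\sqrt n}\mid\widetilde{\mathcal F}_{b})\,e^{it\sum_{j>i}X_{n,j}/\sqrt n}\big]$, where $b$ is the first time of block $i+1$. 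This is legitimate because the later blocks are $\widetilde{\mathcal F}_b$-measurable. It is exactly the form of Lemma \ref{momentbound}, where the block $[m,n+m-1]$ is conditioned on $\widetilde{\mathcal F}_{n+m}$, i.e.\ on the future right after the block. It is also what the paper does with $\widetilde{\mathbb E}_{a_i}X_{n,i}$. Only with this conditioning do you get $\widetilde{\mathbb E}|\widetilde{\mathbb E}_{a_i}X_{n,i}|\precsim k^{\xi-1}$ and $\widetilde{\mathbb E}|\widetilde{\mathbb E}_{a_i}[X_{n,i}^2-\widetilde{\mathbb E}X_{n,i}^2]|\precsim k^{\xi-1}n^{a/(\xi-1)}$, which are the bounds you quote.

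\textbf{Second, your separate treatment of the last block does not close.} Your crude bound $|t|k^{(\xi-1)/2}n^{(a-1)/2}$ exceeds the first term $|t|k^{\xi-1}n^{1/2-a}$ by the factor $k^{-(\xi-1)/2}n^{(3a-2)/2}$.
\begin{itemize}
\item This factor is unbounded as soon as $a>2/3$. That is already true for $k=1$, and the estimate must be uniform in $k$. The paper later takes $a=1-\min\{\xi-2,1\}/4\ge 3/4$.
\item Being linear in $|t|$, the crude bound cannot be absorbed by the $t^2$, $t^4$, $|t|^{q}$ terms as $t\to0$.
\item Dropping that block from the characteristic function would also force you to drop $\widetilde{\mathbb E}X_{n,c_n+1}^2$ from the Gaussian factor. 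That costs a further term of order $t^2k^{\xi-1}n^{a-1}$, which for $a$ close to $1$ is not dominated by the stated terms uniformly in $t$ either.
\end{itemize}
None of this is necessary. Since $|I_{n,c_n+1}|\le 3\lfloor n^a\rfloor$, Lemma \ref{momentbound} gives the same conditional moment bounds for the last block as for the others. The paper simply runs the same estimate over $i=1,\dots,c_n+1$. In particular, the first term of the statement comes entirely from the conditional means, $(c_n+1)|t|k^{\xi-1}/\sqrt n$, which you did compute, and not from any removal step. Also, the blocks $I_{n,i}$ in this lemma are consecutive, so there are no gaps to remove.
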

 \begin{proof}
Using the Taylor expansion
\[e^{-x}=1-x+O(x^2),\]
\[e^{ix}=1+ix-x^2/2+ O(x^2 \min\{|x|,1\})=1+ix-x^2/2+O(|x|^{\frac{3\xi-4}{\xi-1}}),\]
Lemma \ref{twoexpdiff} and the argument of the proof of Corollary 3.1 in \cite{Sutams}, we have 
\begin{align*}
&\Big|\mathbbm{\widetilde{E}} \exp\Big(it\frac{\sum_{i\le c_n+1}X_{n,i}}{\sqrt{n}}\Big)-\exp \Big(-\frac{t^2}{2}\mathbbm{\widetilde{E}} \Big(\frac{\sum_{i\le c_n+1}X_{n,i}}{\sqrt{n}}\Big)^2\Big)\Big|\\
&=\Big|\mathbbm{\widetilde{E}} \exp\Big(it\frac{\sum_{i\le c_n+1}X_{n,i}}{\sqrt{n}}\Big)-\exp \Big(-\frac{t^2}{2}\sum_{i\le c_n+1}\mathbbm{\widetilde{E}} \Big(\frac{X_{n,i}}{\sqrt{n}}\Big)^2\Big)\Big|+t^2 k^{\xi-1}O(n^{\max\{3-\xi, 0\}-a}\log n)\\
&\le \sum_{i=1}^{c_n+1}\mathbbm{\widetilde{E}} \Big|\mathbbm{\widetilde{E}}_{a_{i}}\Big\{\Big[1+i\frac{tX_{n,i}}{\sqrt{n}}-\frac{t^2X_{n,i}^2}{2n}+O\Big(\Big|\frac{tX_{n,i}}{\sqrt{n}}\Big|^{\frac{3\xi-4}{\xi-1}}\Big)\Big]\Big|\Big\}-\Big\{1-\frac{\mathbb{\widetilde{E}}(t X_{n,i})^2}{2n}+O\Big(\Big|\frac{\mathbb{\widetilde{E}}{t^2X_{n,i}^2}}{n}\Big|^2\Big)\Big\}\Big|\\
&\quad +t^2 k^{\xi-1}O(n^{\max\{3-\xi, 0\}-a}\log n)\\
    & \le \sum_{ i=1}^{ c_n+1} \Big\{\frac{|t|\widetilde{\mathbb{E}}|\mathbb{\widetilde{E}}_{a_i}X_{n,i}|}{\sqrt{n}}+ \frac{t^2}{2n} \mathbb{\widetilde{E}}\Big|\mathbb{\widetilde{E}}_{a_i}\Big[X^2_{n,i}-\mathbb{E}X^2_{n,i}\Big]\Big|+  \frac{t^4|\mathbb{\widetilde{E}}X^2_{n,i}|^2}{n^2}+ \Big|\frac{t}{\sqrt{n}}\Big|^{\frac{3\xi-4}{\xi-1}}  \mathbb{\widetilde{E}}|X_{n,i}|^{\frac{3\xi-4}{\xi-1}}\Big\}\\
    &\quad +t^2 k^{\xi-1}O(n^{\max\{3-\xi, 0\}-a}\log n)
    \end{align*}

    Applying Lemma \ref{momentbound} we can continue to estimate
    \begin{align*}
    & \precsim \sum_{ i=1}^{ c_n+1} \Big\{k^{\xi-1}\frac{|t|}{\sqrt{n}}+ \frac{t^2}{2n} k^{\xi-1} n^{\frac{a}{\xi-1}}+  \frac{t^4k^{2\xi-2}n^{2a}}{n^2}+ \Big|\frac{t}{\sqrt{n}}\Big|^{\frac{3\xi-4}{\xi-1}} k^{\frac{(3\xi-4)(\xi-1)}{\xi-2}} n^{\frac{3a\xi-4a}{2\xi-2}}\Big\}\\
    &\quad +t^2 k^{\xi-1}n^{\max\{3-\xi, 0\}-a}\log n\\
    &\precsim k^{\xi-1}\frac{|t|}{n^{a-0.5}}+ \frac{t^2}{2n^{\frac{a(\xi-2)}{\xi-1}}} k^{\xi-1}+  \frac{t^4k^{2\xi-2}}{n^{1-a}}+ |t|^{\frac{3\xi-4}{\xi-1}} k^{\frac{(3\xi-4)(\xi-1)}{\xi-2}} n^{(1-a)\frac{2-\xi}{2\xi-2}}+t^2 k^{\xi-1}n^{\max\{3-\xi, 0\}-a}\log n
\end{align*}where the constant in $\precsim$ does not depend on $k,n$.
\end{proof}

\subsection*{Step \ref{provingclt}: prove CLT with convergence rates.}  

\begin{lemma}\label{cltwithrates}
    \begin{align*}
        &\Big|\int \exp\Big(it\frac{\sum_{0\le i\le n-1}[\mathbbm{1}_B\circ f^i-\mu_{\mathcal{M}}(B)]}{\sqrt{n}}\Big)d\mu_{\mathcal{M}}-e^{-0.5 \sigma^2 t^2}\Big|\precsim (\vert{}t\vert{}+ t^4) n^{- \frac{\min\{\xi-2, \alpha-1\}  \min\{\xi-2, 1\}  (\xi-2)^2}{32(\xi-1)^2(3\xi-4)}}
    \end{align*} where the constant in $\precsim$ does not depend on $t,n$.
\end{lemma}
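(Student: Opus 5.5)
The plan is to combine the error terms from Steps \ref{approximatedbyregularset}--\ref{estimatepreclt} through a chain of triangle inequalities, and then choose $a$ and $k=\lfloor n^{\eta}\rfloor$ to balance them. Write $S_n^B:=\sum_{i<n}[\mathbbm{1}_B\circ f^i-\mu_{\mathcal{M}}(B)]$ and $S_n^A:=\sum_{i<n}\Psi_k\circ F^i$, and set $V_n:=\mathbb{E}(S_n^A/\sqrt n)^2$. The characteristic-function error splits as
\begin{align*}
\big|\mathbb{E}e^{itS_n^B/\sqrt n}-e^{-\sigma^2t^2/2}\big|
&\le \big|\mathbb{E}e^{itS_n^B/\sqrt n}-\mathbb{E}e^{itS_n^A/\sqrt n}\big|
+\big|\mathbb{E}e^{itS_n^A/\sqrt n}-e^{-t^2V_n/2}\big|\\
&\quad+\big|e^{-t^2V_n/2}-e^{-t^2\sigma_k^2/2}\big|+\big|e^{-t^2\sigma_k^2/2}-e^{-t^2\sigma^2/2}\big|.
\end{align*}
Each term is then handled by one earlier result. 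Lemma \ref{approximation} bounds the first term by $|t|k^{-\min\{\xi-2,\alpha-1\}/2}$. Lemma \ref{preclt} gives the second term, after identifying $\mathbb{E}e^{itS^A_n/\sqrt n}$ with the $\widetilde{\Delta}$-integral of $\psi_k$. For the third and fourth terms, $e^{-x}$ is $1$-Lipschitz on $[0,\infty)$, so Lemma \ref{variance} bounds them by $t^2k^2n^{-\min\{\xi-2,1\}}\log n$ and $t^2k^{-\min\{\xi-2,\alpha-1\}}$ respectively.

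Next I collect all terms as powers of $n$. I bound $|t|^{\frac{3\xi-4}{\xi-1}}$ and $t^2$ by $|t|+t^4$, since each exponent lies in $[1,4]$; this is valid because $\frac{3\xi-4}{\xi-1}\in(2,3)$ for $\xi>2$. I also absorb the $\log n$ factors into a slightly smaller exponent. The bound then reads $(|t|+t^4)$ times a maximum of:
\begin{itemize}
\item $n^{-\eta\min\{\xi-2,\alpha-1\}/2}$;
\item $n^{2\eta-\min\{\xi-2,1\}}$;
\item $n^{\eta(\xi-1)-(a-1/2)}$;
\item $n^{\eta(\xi-1)-a(\xi-2)/(\xi-1)}$;
\item $n^{2\eta(\xi-1)-(1-a)}$;
\item $n^{\eta\frac{(3\xi-4)(\xi-1)}{\xi-2}-(1-a)\frac{\xi-2}{2\xi-2}}$;
\item $n^{\eta(\xi-1)+\max\{3-\xi,0\}-a}\log n$.
\end{itemize}

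I fix $a\in(1/2,1)$ so that all the positive "budgets" on the right of each exponent are bounded below by a constant multiple of $\frac{(\xi-2)\min\{\xi-2,1\}}{\xi-1}$. A natural first attempt is $a=1-\frac{\min\{\xi-2,1\}}{4}$ or something similar. Care is needed with the last term, which requires $a>\max\{3-\xi,0\}$. When $\xi\le 3$ this forces $a>3-\xi$, so $a$ must lie strictly between $3-\xi$ and $1$. I will take $a$ halfway, for instance $a=1-\frac{\xi-2}{2}\cdot\frac{\min\{\xi-2,1\}}{\ldots}$, tuned so that the critical block exponent $(1-a)\frac{\xi-2}{2\xi-2}$ is of order $\frac{(\xi-2)^2\min\{\xi-2,1\}}{(\xi-1)^2}$.

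Then I choose $\eta$ small, a constant times $\frac{\min\{\xi-2,1\}(\xi-2)^2}{(\xi-1)^2(3\xi-4)}$, so that every $k$-growth term uses at most half of its budget. The binding constraint is the sixth term, which carries the large factor $\frac{(3\xi-4)(\xi-1)}{\xi-2}$; this is what produces the $(3\xi-4)$ and $(\xi-1)^2$ in the denominator. With this $\eta$, the first term gives decay $n^{-\eta\min\{\xi-2,\alpha-1\}/2}$, and this is the overall bottleneck. It yields the stated exponent $\frac{\min\{\xi-2,\alpha-1\}\min\{\xi-2,1\}(\xi-2)^2}{32(\xi-1)^2(3\xi-4)}$ once the constants are tracked; the factor $32$ absorbs the halvings.

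The main obstacle is this exponent bookkeeping: the choice of $a$ must simultaneously satisfy $a>\max\{3-\xi,0\}$, $a>1/2$, and leave a positive margin $1-a$, all uniformly in the range of $\xi>2$. I expect to treat $\xi\ge3$ and $2<\xi<3$ separately if a single formula for $a$ fails. For $n$ small relative to the constants, the bound is trivial because the left side is at most $2$. The constants do not depend on $t$ or $n$ because none of the invoked lemmas' constants do.
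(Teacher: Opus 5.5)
Your plan is the paper's proof: the same four-way triangle inequality, the same use of Lemmas \ref{approximation}, \ref{preclt} and \ref{variance}, and the same identification of $|t|k^{-\min\{\xi-2,\alpha-1\}/2}$ as the bottleneck. Your ``natural first attempt'' $a=1-\frac{\min\{\xi-2,1\}}{4}$, with $k=\lfloor n^{\eta}\rfloor$ and $\eta=\frac{\min\{\xi-2,1\}(\xi-2)^2}{16(\xi-1)^2(3\xi-4)}$, is exactly the paper's choice.

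Keep that choice and drop the case split. It already gives $a-(3-\xi)=\frac34(\xi-2)>\eta(\xi-1)$ for $\xi<3$, and a sixth exponent $\frac{\min\{\xi-2,1\}(\xi-2)}{16(\xi-1)}-\frac{\min\{\xi-2,1\}(\xi-2)}{8(\xi-1)}<0$. Your ``tuned'' alternative, with $1-a$ of order $\frac{(\xi-2)\min\{\xi-2,1\}}{\xi-1}$, does not work: this $\eta$ forces $1-a>\frac{\min\{\xi-2,1\}}{8}$ through the sixth term, and the tuned $a$ violates that once $\xi$ is close to $2$.
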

\begin{proof} Combining Lemmas \ref{approximation}, \ref{variance}, \ref{preclt} and recalling that $a \in (0.5, 1)$, we obtain 
    \begin{align*}
 &\Big|\int \exp\Big(it\frac{\sum_{0\le i\le n-1}[\mathbbm{1}_B\circ f^i-\mu_{\mathcal{M}}(B)]}{\sqrt{n}}\Big)d\mu_{\mathcal{M}}-e^{-0.5 \sigma^2 t^2}\Big|\\
        &= \Big|\int \exp\Big(it\frac{\sum_{0\le i\le n-1}[\mathbbm{1}_{A}\circ F^i-\int \mathbbm{1}_{A} d\mu_{\Delta}]}{\sqrt{n}}\Big)d\mu_{\Delta}-e^{-0.5 \sigma_k^2 t^2}\Big|+O(|t| k^{-\min\{\xi-2,\alpha-1\}/2})\\
        &\quad +|e^{-0.5 \sigma_k^2 t^2}-e^{-0.5 \sigma^2 t^2}|  \\
        &\precsim k^{\xi-1}\frac{|t|}{n^{a-0.5}}+ \frac{t^2}{n^{\frac{a(\xi-2)}{\xi-1}}} k^{\xi-1}+  \frac{t^4k^{2\xi-2}}{n^{1-a}}+ |t|^{\frac{3\xi-4}{\xi-1}} k^{\frac{(3\xi-4)(\xi-1)}{\xi-2}} n^{(1-a)\frac{2-\xi}{2\xi-2}} +t^2 k^{\xi-1}n^{\max\{3-\xi, 0\}-a}\log n\\
        &\quad +t^2  k^2 n^{-\min\{\xi-2,1\}}\log n +|t| k^{-\min\{\xi-2,\alpha-1\}/2}+t^2k^{-\min\{\xi-2,\alpha-1\}}.
        \end{align*}

        There are many choices for $a,k$, e.g., $a = 1 - \frac{\min\{\xi-2,1\}}{4}, k = \lfloor n^{\frac{\min\{\xi-2,1\}(\xi-2)^2}{16(\xi-1)^2(3\xi-4)}} \rfloor$, we obtain  an upper bound \begin{align*}
            &\Big|\int \exp\Big(it\frac{\sum_{0\le i\le n-1}[\mathbbm{1}_B\circ f^i-\mu_{\mathcal{M}}(B)]}{\sqrt{n}}\Big)d\mu_{\mathcal{M}}-e^{-0.5 \sigma^2 t^2}\Big|\precsim (\vert{}t\vert{}+ t^4) n^{- \frac{\min\{\xi-2, \alpha-1\}  \min\{\xi-2, 1\}  (\xi-2)^2}{32(\xi-1)^2(3\xi-4)}},
        \end{align*}where the constant in $\precsim$ does not depend on $t,n$. 
\end{proof}

\subsection*{Step \ref{provefurtherclt}: apply Lemma \ref{cltwithrates} to prove} 
\begin{lemma}
    \begin{align*}
        &\sup_{x\in \mathbb{R}}\Big|\mu_{\mathcal{M}}(\frac{\sum_{0\le i\le n-1}[\mathbbm{1}_B\circ f^i-\mu_{\mathcal{M}}(B)]}{\sqrt{n}}<x)- \frac{1}{\sqrt{2\pi\sigma^2}}\int_{-\infty}^x e^{-\frac{1}{2\sigma^2 } t^2}dt \Big|\precsim n^{-\frac{\min\{\xi-2, \alpha-1\} \min\{\xi-2, 1\} (\xi-2)^2}{192(\xi-1)^2(3\xi-4)}} .
    \end{align*}
\end{lemma}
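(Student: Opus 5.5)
We will derive the Kolmogorov-distance bound from the characteristic-function bound of Lemma \ref{cltwithrates} via the Berry--Esseen smoothing inequality. Write $S_n:=\sum_{0\le i\le n-1}[\mathbbm{1}_B\circ f^i-\mu_{\mathcal{M}}(B)]/\sqrt{n}$, let $\varphi_n(t)$ be its characteristic function under $\mu_{\mathcal{M}}$, and let $G$ be the $N(0,\sigma^2)$ distribution function. Its density is bounded by $(2\pi\sigma^2)^{-1/2}$; this is finite because $\sigma^2\in(0,\infty)$ by Lemma \ref{variancesigma}. Esseen's inequality then gives, for every $T>0$,
\[
\sup_{x}\big|\mu_{\mathcal{M}}(S_n<x)-G(x)\big|\precsim \int_{-T}^{T}\frac{|\varphi_n(t)-e^{-\sigma^2t^2/2}|}{|t|}\,dt+\frac{1}{T\sqrt{\sigma^2}}.
\]
Esseen's inequality is usually stated with the distribution function $\mu(S_n\le x)$. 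Switching to the strict inequality $<x$ costs nothing, since both sides are approximated uniformly and $G$ is continuous.

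Next, insert the bound of Lemma \ref{cltwithrates}, $|\varphi_n(t)-e^{-\sigma^2t^2/2}|\precsim(|t|+t^4)n^{-\gamma}$ with
\[
\gamma:=\frac{\min\{\xi-2,\alpha-1\}\min\{\xi-2,1\}(\xi-2)^2}{32(\xi-1)^2(3\xi-4)}.
\]
Dividing by $|t|$ removes the singularity at $t=0$, since the integrand becomes $(1+|t|^3)n^{-\gamma}$. Integrating over $[-T,T]$ therefore gives a bound of order $(T+T^4)n^{-\gamma}$. The total error is then $\precsim (T+T^4)n^{-\gamma}+T^{-1}$.

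Finally, optimize over $T$. Balancing $T^4n^{-\gamma}$ against $T^{-1}$ would give $T=n^{\gamma/5}$ and an error of order $n^{-\gamma/5}$, which is stronger than what is claimed. Taking instead the cruder choice $T=n^{\gamma/6}$ gives $T^4n^{-\gamma}=n^{-\gamma/3}$ and $T^{-1}=n^{-\gamma/6}$, so the total error is $\precsim n^{-\gamma/6}$. Since $\gamma/6$ is exactly the exponent $\frac{\min\{\xi-2,\alpha-1\}\min\{\xi-2,1\}(\xi-2)^2}{192(\xi-1)^2(3\xi-4)}$ in the statement, this proves the lemma.

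There is no real obstacle. The one point to check is that the implied constant in Lemma \ref{cltwithrates} is uniform in $t$ over the whole range $|t|\le T$, and it is, since that lemma states its constant does not depend on $t$ or $n$. Because the characteristic-function bound holds for all real $t$, no separate small-$t$ or large-$t$ analysis is needed.
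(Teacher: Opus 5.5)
Your proposal is correct and follows the paper's own argument: Esseen's smoothing inequality (Feller, Ch.~XVI.3, (3.13)) combined with the characteristic-function bound of Lemma~\ref{cltwithrates} reduces the error to $\int_{-T}^{T}(1+|t|^3)n^{-\gamma}\,dt+T^{-1}$, and the paper also takes $T_n=n^{\gamma/6}$. Your side remark is also right: the paper settles for the cruder choice, whereas $T=n^{\gamma/5}$ balances the terms and gives the slightly better exponent $\gamma/5$.
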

\begin{proof}
   Using Lemma \ref{cltwithrates}, we argue as in equation (3.13) of Chapter XVI.3  in \cite{feller}, 
    \begin{align*}
        &\sup_{x\in \mathbb{R}}\Big|\mu_{\mathcal{M}}(\frac{\sum_{0\le i\le n-1}[\mathbbm{1}_B\circ f^i-\mu_{\mathcal{M}}(B)]}{\sqrt{n}}<x)- \frac{1}{\sqrt{2\pi\sigma^2}}\int_{-\infty}^x e^{-\frac{1}{2\sigma^2 } t^2}dt \Big|\\
        &\precsim \int_{-T_n}^{T_n}(1+ |t|^3)  n^{-\frac{\min\{\xi-2, \alpha-1\} \min\{\xi-2, 1\} (\xi-2)^2}{32(\xi-1)^2(3\xi-4)}}dt+\frac{1}{T_n} \precsim  n^{-\frac{\min\{\xi-2, \alpha-1\} \min\{\xi-2, 1\} (\xi-2)^2}{192(\xi-1)^2(3\xi-4)}} 
    \end{align*}where $T_n:= n^{ \frac{\min\{\xi-2, \alpha-1\} \min\{\xi-2, 1\} (\xi-2)^2}{192(\xi-1)^2(3\xi-4)}}$. 
\end{proof}

With all steps above, we conclude the proof of Theorem \ref{clt}.

\section{Almost sure invariance principles (ASIP)}\label{nonsmoothasip}
The ASIP is a strong statistical property that provides a close approximation of the Birkhoff sum by a Brownian motion. It implies the CLT and several other limit theorems. When the observables are regular, the ASIP was established in \cite{MN1,MN2,alexey,alexey1} for a class of hyperbolic billiards (a special class of hyperbolic systems with CMZ structures). Now we prove such an ASIP for our class of irregular observables. The arguments require several new and more delicate techniques, which are summarized in subsection~\ref{scheme}. As in Theorem \ref{clt}, we suppose that $B\in  \mathcal{B}$ with $\Leb_{\mathcal{M}}(B)\in (0, \Leb_{\mathcal{M}}(\mathcal{M}))$, adopt the same notation $\sigma^2$, and let $S_n:=\sum_{0\le i\le n}[\mathbbm{1}_B\circ f^i-\mu_{\mathcal{M}}(B)]$.
\begin{theorem}[ASIP for indicator observables]\label{asip} \par
   Suppose that a hyperbolic dynamical system $(\mathcal{M},f, \mu_{\mathcal{M}})$ with a CMZ structure described in section \ref{cmz} satisfies additional assumptions $\alpha>1$, $\frac{d\mu_{\mathcal{M}}}{d\Leb_{\mathcal{M}}}\in L^{\infty}$ and $\int R_p^{\xi} d\mu_{\Lambda}< \infty$ for some $\xi>2$, then we have
   \begin{enumerate}
       \item If $\sigma^2=0$, then there is an $L^1$-function $\phi$ such that $\mathbbm{1}_B-\mu_{\mathcal{M}}(B)=\phi \circ f -\phi$.
       \item If $\sigma^2>0$, one can redefine $(S_n)_{n\ge 1}$ without changing its
distribution on a (richer) probability space  $\Omega$ on which there exists a Brownian motion $(\mathbb{B}_t)_{t \ge 0}$ defined on $\Omega$, such that \begin{align*}
   &S_n= \mathbb{B}_{\sigma^2 n}+O\!\left(
n^{\frac12-
\frac{\min\{\xi - 2, \alpha - 1\}(\xi - 2)\min\{\xi - 2, 1\}^2}{49152(\xi - 1)^2(3\xi - 4) +4 \min\{\xi - 2, \alpha - 1\}(\xi - 2)\min\{\xi - 2, 1\}^2}
}
\right) \text{ a.s. for any }n\in \mathbb{N}.
   \end{align*}
     \end{enumerate}
\end{theorem}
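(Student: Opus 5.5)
Part 1 is exactly Lemma \ref{notacoboundary}, so the real content is part 2. The plan is to reuse the approximation scheme from the proof of Theorem \ref{clt}, now with a \emph{time-dependent} truncation level. Write $S_n$ on the tower as $\sum_{i\le n}[\mathbbm{1}_{F^{-k}\pi^{-1}B}\circ F^i-\mu_\Delta(F^{-k}\pi^{-1}B)]$ and replace the observable by the $\sigma(\mathcal{Q}_{2k})$-measurable $\Psi_k=\mathbbm{1}_A-\mu_\Delta(A)$. The twist is that along the orbit we use a slowly growing $k$: cut $\mathbb{N}_0$ into dyadic-type blocks $[N_j,N_{j+1})$ and use $k_j=\lfloor N_j^{\eta}\rfloor$ on the $j$-th block, for a small $\eta>0$ to be optimized.

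\textbf{Step 1 (approximation error).} The third estimate of Lemma \ref{approximation}, applied on windows, gives the second-moment bound $O(|I|k^{-\min\{\xi-2,\alpha-1\}})$ for the error $\sum_{i\in I}[\mathbbm{1}_{A\setminus B'}\circ F^i-\mu_\Delta(A\setminus B')]$ over any interval $I$; the proof is identical since it only used stationarity and the correlation bounds for $A\setminus B'$. A Rademacher--Menshov / Móricz maximal inequality then controls $\max_{m\le |I|}$ of these partial sums at the cost of a $\log^2$ factor. Summing over dyadic blocks, Borel--Cantelli makes the total approximation error $O(n^{1/2-\delta_1})$ a.s., with $\delta_1\approx \eta\min\{\xi-2,\alpha-1\}/2$ up to $\epsilon$-losses. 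The blocks where $k$ changes also contribute bounded errors per block, since there are only $O(\log n)$ of them.

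\textbf{Step 2 (ASIP for the regularized sums).} For the $\Psi_{k_j}$ pieces, run the martingale/block ASIP machinery of \cite{Sutams}, e.g.\ via its Theorem for nonstationary sequences with the conditions (A4)--(A6) verified in Lemma \ref{a3toa6}. The inputs are the moment bounds of Lemma \ref{momentbound}, which carry explicit $k^{\xi-1}$ and $k^{(3\xi-4)(\xi-1)/(\xi-2)}$ factors. Concretely, I would apply a Gál--Koksma/Berkes--Philipp type scheme. Split into big blocks of length $n^a$ separated by small gaps and use Lemma \ref{momentbound} to show the following:
\begin{itemize}
\item conditional expectations $\widetilde{\mathbb{E}}_{a_i}X_{n,i}$ and conditional variances are close to their unconditional values;
\item cross-covariances between blocks are small;
\item the $\frac{3\xi-4}{\xi-1}$-moments of blocks are controlled, which supplies the Lyapunov-type condition.
\end{itemize}
The Strassen/Skorokhod embedding then yields $\sum \Psi_{k}\circ F^i=\mathbb{B}_{V_n}+O(n^{1/2-\delta_2})$, where $V_n$ is the accumulated variance. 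Because $k$ grows like $n^\eta$, every error term picks up a polynomial factor $n^{\eta c}$, and $\eta$ must be small enough that these are absorbed.

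\textbf{Step 3 (variance matching).} Show $V_n=\sigma^2 n+O(n^{1-\delta_3})$ using Lemma \ref{variance}, namely $\sigma_k^2-\sigma^2=O(k^{-\min\{\xi-2,\alpha-1\}})$ together with the finite-$n$ variance error $k^2n^{-\min\{\xi-2,1\}}\log n$. The last inequality of Lemma \ref{momentbound} lets us discard gaps. Finally, replace $\mathbb{B}_{V_n}$ by $\mathbb{B}_{\sigma^2 n}$ using Brownian modulus of continuity, which costs $O(n^{1/2-\delta_3/2}\log n)$. Optimizing $a$ and $\eta$ against all three rates should produce the stated exponent with its $49152$ denominator; I would not try to match the constant precisely.

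\textbf{Main obstacle.} The hardest step is Step 2 with a $k$ that changes along time. The process $\Psi_{k_j}\circ F^i$ is no longer stationary, and the moment bounds degrade polynomially in $k$. So I must either invoke a nonstationary ASIP (Cuny--Merlevède or the version in \cite{Sutams}) whose hypotheses are stated blockwise, or freeze $k$ on each dyadic block and glue the Brownian pieces, checking that the gluing errors sum to $o(n^{1/2-\delta})$. I would first try the dyadic freezing, because Lemma \ref{momentbound} holds uniformly in $m$ and is therefore directly applicable block by block.
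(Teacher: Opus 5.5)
Your outline has the same architecture as the paper's proof: a truncation level growing with time, Lemma \ref{approximation} plus a maximal inequality and Borel--Cantelli for the replacement error, blocks with gaps, decoupling and a Skorokhod embedding, variance matching, and projection to $\mathcal{M}$. Your dyadic Borel--Cantelli in Step 1 is actually more economical than Lemma \ref{asip1}. But Step 2, where the whole difficulty lies, is left open, and the fallback you prefer would not work.

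An ASIP for the stationary sequence $\Psi_k\circ F^i$ at fixed $k$ is an almost sure asymptotic statement. This holds whether it comes from a black box or from the fixed-$k$ bounds of Lemmas \ref{a3toa6} and \ref{momentbound}. Its random constant and its Brownian motion both depend on $k$ in an uncontrolled way. Restricted to a finite dyadic window it gives nothing quantitative. Brownian motions built separately on different windows live on different enlarged spaces and cannot be concatenated into one, all the more since adjacent windows have no gap between them and are correlated. Relatedly, once $k$ varies along the orbit you need the pathwise identity $\mathbbm{1}_{\pi^{-1}B}\circ F^i=\mathbbm{1}_{F^{-k}\pi^{-1}B}\circ F^{i-k}$, not the distributional rewriting you start from.

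What is needed is a single global coupling in which every error bound is explicit in $k$. The paper builds it as follows:
\begin{enumerate}
\item It freezes $k_n=\lfloor|I_n|^{\varepsilon}\rfloor$ on blocks $I_n$ of length $n^a$.
\item It cuts $I_n$ into sub-blocks $I'_{n,i}$ separated by gaps $J_{n,i}$ of length $\approx|I_{n,i}|^{\varepsilon_1}\gg 2k_n$. Then Lemma \ref{youngmixing} gives a mixing coefficient $\lesssim|I_n'|^{-c\varepsilon_1(\xi-1)}$ uniformly over the past.
\item It applies Berkes--Philipp (Lemma \ref{usefulprob1}) once, to all $Y_{n,i}$ simultaneously.
\item It embeds all the resulting independent variables in one Brownian motion via the reverse martingale $R_{N,I}$. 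Lemma \ref{verifyasipcriteria} controls the stopping times through the $\frac{3\xi-4}{\xi-1}$-moment and its factor $k^{\frac{(3\xi-4)(\xi-1)}{\xi-2}}$.
\end{enumerate}

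The second missing ingredient is interpolation from block endpoints to arbitrary $n$: you never control $\max_u|\sum_{i\in I_n\cap[0,u]}\Psi_{k_n}\circ F^i|$. Second moments cannot do it, since $\sum_n k_n^{\xi-1}|I_n|\log^2 n/a_n^{1-2\delta}$ diverges. With the $p=\frac{3\xi-4}{\xi-1}$-moment of Lemma \ref{momentbound}, a maximal inequality on whole blocks needs at least $a\varepsilon\frac{(3\xi-4)(\xi-1)}{\xi-2}<\frac{\xi-2}{2(\xi-1)}$. This contradicts the requirement $a\varepsilon\min\{\xi-2,\alpha-1\}>4$ of Lemma \ref{asip1}. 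That is the content of Remark \ref{cannotmax}, and it is why the paper introduces the second level of blocks and proves Lemma \ref{maxinequality} uniformly over the short sub-blocks. There, even after the union bound over the $c_N\approx|I_N|^{1-c}$ sub-blocks, the extra moment gains a factor $|I_N|^{-(1-c)\frac{\xi-2}{2(\xi-1)}}$, so only $\varepsilon$, not $a\varepsilon$, must be small.

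Your Step 1 avoids the $1/(a+1)$ loss and hence the constraint of Lemma \ref{asip1}. So a one-level scheme with $k$ growing slowly enough, roughly $(a+1)\eta\frac{(3\xi-4)(\xi-1)}{\xi-2}<\frac{\xi-2}{2(\xi-1)}$, might work. You would also have to run the Borel--Cantelli step of the Skorokhod embedding along a geometric subsequence, because with $c=1$ the Borel--Cantelli series in the proof of Lemma \ref{verifyasipcriteria} diverges over all $N$. But this has to be carried out, and the exponent it produces must then be shown to be at least the one in the statement.
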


\begin{remark}
The ASIP implies the corollaries below. 
    \begin{enumerate}
        \item 
   Law of the iterated logarithm (LIL):
    \begin{align*}
  \limsup_{n\to \infty}\frac{S_n}{\sqrt{2n\log\log n}}=\sigma \text{ a.s.}
\end{align*}
\item Functional CLT (FCLT): $S^n \to_d  \sigma B$  on  $C[0,1]$, where $S^n$ is a random piecewise continuous function on $[0,1]$:

\[S^n_t: =\frac{S_i}{\sqrt{n}}+\frac{t-\frac{i}{n}}{n^{-1}} \cdot \frac{S_{i+1}-S_i}{\sqrt{n}}, \quad t \in [\frac{i}{n},\frac{i+1}{n}], \]
where $B$  is a standard Brownian motion.

\item Arcsine laws: for any $i=1,2,3$, $\tau_n^i \to_d \sin^2 U$ where $U\sim U(0,2\pi)$, $\tau^1_n:=n^{-1}\sum_{i\le n}\mathbbm{1}_{S_i>0}$, $\tau^2_n:=n^{-1}\min \{k \ge 0: S_k=\max_{j\le n}S_j\}$, $\tau_n^3:=n^{-1}\max\{k \le n: S_kS_n\le 0\}$.
\end{enumerate}
\end{remark}

\begin{remark}
     As in the case of Theorem \ref{clt}, the assumption $\xi>2$ is optimal. Since the observable \(\mathbbm{1}_{B}\) is not dynamically regular, the ASIP rate of convergence is not as good as that in the usual setting of the ASIP \cite{alexey1,alexey, KOREPANOVcoboundary}.
\end{remark}

\subsection{Proof scheme and conventions}\label{scheme}
Throughout this section we adopt the following convention: \(\mathbb{E}(\cdot)\) (resp. \(P(\cdot)\)) denotes the expectation (resp. probability) of a random variable whenever the underlying probability space is clear. When $\sigma^2=0$, $\mathbbm{1}_B-\mu_{\mathcal{M}}(B)=\phi \circ f-\phi$ has been proved in Theorem \ref{clt}. When $\sigma^2>0$, the proof proceeds in several steps. 
\begin{enumerate}
\item \label{blocks} Consider $S_n \circ \pi:=\sum_{0\le i\le n}[\mathbbm{1}_{\pi^{-1}B}\circ F^i-\mu_{\Delta}(\pi^{-1}B)]$ on the hyperbolic Young tower $\Delta$, introduce a block division of $\mathbb{N}_0=\bigcup_n I_n$. Let $X_n:=\sum_{i \in I_n}[\mathbbm{1}_{\pi^{-1}B}\circ F^i-\mu_{\Delta}(\pi^{-1}B)]$.
\item \label{approximateasip} Approximate $X_n$ by a suitable $\sigma(\bigcup_{i\ge 0}\mathcal{Q}_i)$-measurable random variable $X'_n$, where $(X'_n)_n$ is a nonstationary process. This step is new and crucial for the ASIP when observables are not dynamically smooth; the classical approach uses a coboundary argument \cite{MN1,MN2,KOREPANOVcoboundary} that requires smoothness.
\item \label{iidapproximate} Approximate each \(X'_n\) by a sum of independent random variables. To do this we split each block \(I_n\) further into many subblocks and the independent random variables are associated with these subblocks. This refined blocking is different from the usual methods \cite{BP,MN2,Sutams}; the reason is explained in Remark \ref{cannotmax} and shows why the techniques of \cite{BP,MN2,Sutams} (based on regular observables) cannot be applied directly.
\item \label{gaussianapproximate} Approximate the independent random variables by suitable Gaussian variables. The arguments here are inspired by the proofs in \cite{Sudcds}, which work effectively for our one‑dimensional nonstationary process and our refined subblocks.
\item \label{mx} Derive a double maximal inequality for the nonstationary process $X_n'$. This estimate is required by our unusual refined blocking arguments. 
\item \label{asipontower} Combine the approximations to obtain an ASIP for \(S_n\circ\pi\) on the tower \(\Delta\) with a Brownian motion \((B_t)_{t\ge0}\). Finally, project this ASIP down to the original system on \(\mathcal{M}\).
\end{enumerate}

\subsection{Some useful probabilistic lemmas}
In this subsection we present several standard probabilistic results.
\begin{lemma}\label{variancegrow}Let $\psi:=\mathbbm{1}_{\pi^{-1}B}-\mu_{\Delta}(\pi^{-1}B)$. There is $C>0$ such that for any $m,q \ge 0, n,p\ge 1$, \[\mathbb{E}\Big(\sum_{k=m}^{ n+m-1} \psi \circ F^k \Big)^2 \le Cn,\]
  \[\Big|\mathbb{E}  \Big[(\sum_{k=m}^{n+m-1} \psi \circ F^k)  (\sum_{k=q+m+n}^{q+n+m+p-1}\psi \circ F^k)\Big]\Big|\le C\max(n,p)^{\max(2-\min\{\alpha, \xi-1\}, 0)}\log \max(n,p). \] 
\end{lemma}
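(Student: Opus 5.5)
Both bounds follow from expanding the sums into pairwise correlations and bounding each correlation with Theorem \ref{mixingrate}. First lift to the tower. Since $\mu_{\mathcal{M}}=\pi_*\mu_\Delta$ and $\pi\circ F=f\circ\pi$, we have $\mathbb{E}[\psi\circ F^i\,\psi\circ F^j]=\int[\mathbbm{1}_B-\mu_{\mathcal{M}}(B)][\mathbbm{1}_B-\mu_{\mathcal{M}}(B)]\circ f^{|j-i|}d\mu_{\mathcal{M}}=:c(|j-i|)$, by $F$-invariance of $\mu_\Delta$. Theorem \ref{mixingrate} with $B'=B$ gives $|c(p)|\precsim p^{-\min\{\xi-1,\alpha\}}$ for $p\ge p_{B,B}$. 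For $p<p_{B,B}$ we use the trivial bound $|c(p)|\le 1$. Hence $|c(p)|\le C_B(1+p)^{-\gamma}$ for all $p\ge 0$, where $\gamma:=\min\{\xi-1,\alpha\}>1$ because $\xi>2$ and $\alpha>1$. This summability is the only input needed, and it is the one place where the hypotheses $\xi>2$, $\alpha>1$ enter.

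For the first inequality, expand the square:
\[
\mathbb{E}\Big(\sum_{k=m}^{n+m-1}\psi\circ F^k\Big)^2=\sum_{i,j=m}^{n+m-1}c(|i-j|)\le n\,c(0)+2n\sum_{p\ge1}|c(p)|\le Cn,
\]
and the last sum is finite because $\gamma>1$. This is the same computation as in Lemma \ref{variancesigma}.

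For the second inequality, write the cross term as $\sum_{i=m}^{n+m-1}\sum_{j=q+m+n}^{q+n+m+p-1}c(j-i)$. Substitute $u:=n+m-1-i\in[0,n-1]$ and $v:=j-(q+m+n)\in[0,p-1]$, so that $j-i=u+v+q+1\ge u+v+1$. Using $|c(r)|\precsim r^{-\gamma}$ and monotonicity in $q$, the sum is at most $\sum_{u<n}\sum_{v<p}(u+v+1)^{-\gamma}$. Set $N:=\max(n,p)$ and group the terms by $s=u+v+1\le 2N$. Each value of $s$ occurs at most $\min(s,N)$ times, so the sum is bounded by $\sum_{s\le 2N}s^{1-\gamma}$.

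Split into cases according to $\gamma$. If $\gamma<2$, the sum is $\precsim N^{2-\gamma}$. If $\gamma=2$, it is $\precsim\log N$. If $\gamma>2$, it is $O(1)$, and $O(1)\le C\log N$ as soon as $N\ge2$; the degenerate case $N=1$ is trivial since $|c|\le1$. In every case the bound is $\precsim N^{\max(2-\gamma,0)}\log N$, which is the claimed estimate with constants independent of $m,q,n,p$.

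The only delicate point is the small-lag regime, where Theorem \ref{mixingrate} applies only for $p\ge p_{B,B}$. This is absorbed by the uniform bound $|c(p)|\le1$ and the resulting constant $C_B$, which depends only on $B$. No further dynamical input beyond Theorem \ref{mixingrate} and invariance is required.
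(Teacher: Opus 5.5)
Your proposal is correct and follows the paper's own proof: you lift via $\pi_*\mu_\Delta=\mu_{\mathcal{M}}$, bound the lagged correlations by Theorem~\ref{mixingrate}, and then expand the block sums, which are exactly the computations of Lemmas 4.2 and 4.6 of \cite{Sutams} that the paper cites. One small correction: when $n=p=1$ the stated right-hand side is $C\log 1=0$, so the bound $|c|\le 1$ does not make that case trivial (no argument can, as it is a defect of the statement); read $\log\max(n,p)$ as, e.g., $\log(2+\max(n,p))$, and your argument then goes through unchanged.
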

\begin{proof}
    The proof makes use of $\pi_{*}\mu_{\Delta}=\mu_{\mathcal{M}}$ and Theorem \ref{mixingrate}. The arguments are the same as those in Lemmas 4.2 and 4.6 of \cite{Sutams}.
\end{proof}

\begin{lemma}[See \cite{serfling}, Corollary A3.1 and Theorem B]\label{usefulprob} \par
    Given random variables $x_i$ with zero means and finite variances, suppose that there are $\varepsilon, C>0$ such that $\sup_m\mathbb{E}(\sum_{k=m}^{ n+m-1} x_k )^2 \le C n$ (resp. $\sup_m\mathbb{E}|\sum_{k=m}^{ n+m-1} x_k |^{2+\varepsilon} \le C n^{1+0.5\varepsilon}$), then we have $\sup_m\mathbb{E}\max_{u\le n}(\sum_{k=m}^{ u+m-1} x_k )^2 \le C n\log_2^2(2n)$ (resp. $\sup_m\mathbb{E}\max_{u\le n}|\sum_{k=m}^{ u+m-1} x_k |^{2+\varepsilon} \precsim n^{1+0.5\varepsilon}$), where the constant in  $\precsim$ depends only on $\sup_{i}|x_i|, C$ and $\varepsilon$.

    Together with Lemma \ref{variancegrow} we obtain $\sup_m\mathbb{E}\max_{u\le n}(\sum_{k=m}^{ u+m-1} \psi \circ F^k )^2 \le Cn\log_2^2(2n) $.
\end{lemma}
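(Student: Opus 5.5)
The plan is to use the classical Menshov--Rademacher/Móricz chaining argument, which is what Corollary A3.1 and Theorem B of \cite{serfling} formalize. The only structural input is that the bound $g(m,n):=Cn$ (resp. $g(m,n)^{1+0.5\varepsilon}$) on the moments of block sums is superadditive in the block, i.e. $g(m,n)+g(m+n,p)\le g(m,n+p)$; stationarity is never needed. I would write $S(m,u):=\sum_{k=m}^{u+m-1}x_k$ and, after enlarging $n$ to the next power of $2$ (at the cost of a constant), assume $n=2^L$.

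\emph{Second-moment case.} Every $u\le n$ has a binary expansion, so $S(m,u)$ is a sum of at most $L+1$ dyadic block sums, at most one per level $j=0,\dots,L$. A block at level $j$ has the form $[m+r2^{j},\,m+(r+1)2^{j})$.
\begin{itemize}
\item By Cauchy--Schwarz,
\[
\max_{u\le n}S(m,u)^2\le (L+1)\sum_{j=0}^{L}\max_r S(m+r2^j,2^j)^2\le (L+1)\sum_{j=0}^{L}\sum_{r<n2^{-j}}S(m+r2^j,2^j)^2 .
\]
\item Taking expectations and using the hypothesis for each block, level $j$ contributes at most $\sum_r C2^j=Cn$.
\item Summing over the $L+1$ levels gives $\mathbb{E}\max_{u\le n}S(m,u)^2\le C n(L+1)^2\le Cn\log_2^2(2n)$, uniformly in $m$.
\end{itemize}

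\emph{Case of exponent $\gamma=2+\varepsilon$.} Here I would run Serfling's induction on $n$ to remove the logarithm; this is the main obstacle.
\begin{itemize}
\item Split $[m,m+n)$ at its midpoint $h=\lfloor n/2\rfloor$. With $M(m,n):=\max_{u\le n}|S(m,u)|$,
\[
M(m,n)\le \max\bigl(M(m,h),\ |S(m,h)|+M(m+h,n-h)\bigr).
\]
\item Hence
\[
M(m,n)^{\gamma}\le M(m,h)^{\gamma}+\bigl(|S(m,h)|+M(m+h,n-h)\bigr)^{\gamma}.
\]
Expanding the second term naively loses a constant factor $2^{\gamma-1}$, which would break the induction. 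I would follow Serfling and instead use the finer inequality
\[
(a+b)^\gamma\le a^\gamma+b^\gamma+c_\gamma\bigl(a^{\gamma-1}b+ab^{\gamma-1}\bigr),
\]
and bound the cross terms by Hölder, using the inductive hypothesis $\mathbb{E}M^\gamma\le A\,g^{\gamma/2}$ on the halves and the given moment bound on $S(m,h)$.
\item Since $g^{\gamma/2}$ scales like $n^{1+0.5\varepsilon}$ and $1+0.5\varepsilon>1$, each half contributes about $2^{-(1+0.5\varepsilon)}$ of the total, and $2\cdot 2^{-(1+0.5\varepsilon)}<1$.
\item This strict contraction lets one pick a large constant $A$, depending only on $C,\varepsilon$, that closes the induction with no logarithmic loss.
\item The base cases (small $n$) and the rounding $h=\lfloor n/2\rfloor$ are handled by the trivial bound $M(m,n)\le n\sup_i|x_i|$. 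This is where the dependence of the constant on $\sup_i|x_i|$ enters.
\end{itemize}

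\emph{Application to $\psi\circ F^k$.} Set $x_k=\psi\circ F^k$ with $\psi=\mathbbm{1}_{\pi^{-1}B}-\mu_\Delta(\pi^{-1}B)$. These have mean zero and $|x_k|\le 1$. Lemma \ref{variancegrow} gives $\sup_m\mathbb{E}(\sum_{k=m}^{n+m-1}\psi\circ F^k)^2\le Cn$ with $C$ independent of $m$. The second-moment case then yields $\sup_m\mathbb{E}\max_{u\le n}(\sum_{k=m}^{u+m-1}\psi\circ F^k)^2\le Cn\log_2^2(2n)$.
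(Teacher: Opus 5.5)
Your proposal is correct and essentially reconstructs the Menshov--Rademacher/Serfling-type arguments that the paper simply cites; the paper gives no proof beyond the citation and the application to $x_k=\psi\circ F^k$ via Lemma~\ref{variancegrow}, which you also handle correctly. One small point: you do not need to enlarge $n$ to a power of $2$, which would spoil the exact constant $C$, because using only the dyadic blocks $[m+r2^j,m+(r+1)2^j)$ with $(r+1)2^j\le n$ already gives $\lfloor\log_2 n\rfloor+1\le\log_2(2n)$ levels, each contributing at most $Cn$.
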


\begin{lemma}[See \cite{BP}, Theorem 2]\label{usefulprob1} \par
     Let \(\{Y_k, k \geq 1\}\) be a sequence of random variables with values in \(\mathbb{R}\) and let \(\{\mathcal{E}_k, k \geq 1\}\) be a family of \(\sigma\)-fields where \(Y_k\) is \(\mathcal{E}_k\)-measurable. Suppose that for some \(\phi_k > 0\) we have $|P(AB) - P(A)P(B)| \leq \phi_k P(A)$ for all $k\ge 1$, \(A \in \bigvee_{j < k} \mathcal{E}_j\) and \(B \in \mathcal{E}_k\). Then without changing its distribution we can redefine the sequence \(\{Y_k, k \geq 1\}\) on a richer probability space together with a sequence \(\{Y'_k, k \geq 1\}\) of independent random variables such that \(Y'_k=_dY_k\) and $P\{|Y_k-Y'_k| \geq 6\phi_k\} \leq 6\phi_k$ for any $k\ge 1$.
\end{lemma}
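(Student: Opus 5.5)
The statement is Theorem 2 of \cite{BP}, so in the paper it can simply be quoted. If I were to reprove it, I would build the $Y'_k$ inductively by a conditional coupling on an enlarged space. Take $\Omega\times[0,1]^{\mathbb{N}}$ with product measure $P\otimes\Leb^{\otimes\mathbb{N}}$, and let $(U_k)_{k\ge1}$ be i.i.d.\ uniform variables independent of everything else. Write $\mathcal{F}_{k-1}:=\bigvee_{j<k}\mathcal{E}_j$. Let $\mu_k$ be the law of $Y_k$ and $\mu_k^{\omega}$ a regular conditional distribution of $Y_k$ given $\mathcal{F}_{k-1}$.

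\textbf{Step 1 (conditional law is close to the marginal).} For a Borel set $E$, let $D_E^+:=\{\omega:\mu_k^\omega(E)-\mu_k(E)>\phi_k\}$. This set lies in $\mathcal{F}_{k-1}$, and $\{Y_k\in E\}\in\mathcal{E}_k$. Integrating over it gives
\[
P(D_E^+\cap\{Y_k\in E\})-P(D_E^+)\mu_k(E)>\phi_k P(D_E^+)
\]
unless $P(D_E^+)=0$, which would contradict the hypothesis. Treating $D_E^-$ in the same way yields $|\mu_k^\omega(E)-\mu_k(E)|\le\phi_k$ a.s. for each fixed $E$. Passing to a countable family (finite unions of rational intervals) gives a null set outside of which $\mu_k^\omega(G)\le\mu_k(G^{\varepsilon})+\phi_k$ for every closed $G$, up to an arbitrarily small enlargement. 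Hence the Prokhorov distance satisfies $\pi(\mu_k^\omega,\mu_k)\le 2\phi_k$, say, for a.e.\ $\omega$.

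\textbf{Step 2 (measurable Strassen--Dudley coupling).} By Strassen's theorem, a Prokhorov distance at most $\varepsilon$ between two laws on $\mathbb{R}$ yields a coupling $(Z,Z')$ with $P(|Z-Z'|\ge\varepsilon)\le\varepsilon$. I need this coupling chosen measurably in $\omega$ and realized with the fresh uniform $U_k$. Concretely, I want a jointly measurable map $H_k(\omega,y,u)$ such that, conditionally on $\mathcal{F}_{k-1}$ and on $Y_k=y$, the variable $Y'_k:=H_k(\omega,Y_k,U_k)$ has the kernel prescribed by the coupling. Two consequences follow.
\begin{enumerate}
\item Conditionally on $\mathcal{F}_{k-1}\vee\sigma(U_1,\dots,U_{k-1})$, the law of $Y'_k$ is exactly $\mu_k$. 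Since $Y'_1,\dots,Y'_{k-1}$ are measurable with respect to this $\sigma$-field, $Y'_k$ is independent of them and $Y'_k=_dY_k$.
\item $P(|Y_k-Y'_k|\ge 6\phi_k)\le 6\phi_k$, after integrating the conditional bound over $\omega$. The constant $6$ absorbs the losses from Step 1 and from Step 2.
\end{enumerate}
The original sequence $(Y_k)$ is untouched, so its distribution is unchanged.

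\textbf{Main obstacle.} The delicate point is the measurable selection in Step 2. Strassen's theorem only gives existence of a coupling for each fixed $\omega$. In one dimension I would try to avoid abstract selection by writing the coupling explicitly through conditional and unconditional quantile functions. Quantile coupling alone controls only the Lévy distance, not values directly, so I would combine it with a Prokhorov-type bound. If that fails, I would fall back on the measurable version of Strassen--Dudley in Berkes--Philipp's appendix (their Lemma A1).
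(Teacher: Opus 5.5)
Quoting Theorem 2 of Berkes--Philipp is exactly what the paper does. It gives no argument beyond the citation, so your reproof is an independent route, and it is sound in outline.

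The step you flag as the main obstacle largely disappears, because your Step 1 proves a total-variation bound and not just a Prokhorov bound. Off a single null set, $E\mapsto\mu_k^\omega(E)-\mu_k(E)$ is a finite signed measure bounded by $\phi_k$ in absolute value on your countable generating algebra. The Borel sets where this bound holds form a monotone class, so $\sup_E|\mu_k^\omega(E)-\mu_k(E)|\le\phi_k$ over all Borel $E$.

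With this you can replace Strassen--Dudley by the maximal coupling, which is explicitly measurable:
\begin{enumerate}
\item Let $m^\omega:=\mu_k^\omega\wedge\mu_k$, and take a jointly measurable version of $h^\omega:=dm^\omega/d\mu_k^\omega$ (e.g., from Doob's dyadic construction of densities with respect to $\mu_k^\omega+\mu_k$).
\item Put $Y'_k:=Y_k$ when $U_k\le h^\omega(Y_k)$.
\item Otherwise let $Y'_k$ be the quantile transform of the normalized residual $(\mu_k-m^\omega)/(1-m^\omega(\mathbb{R}))$, evaluated at the rescaled uniform $(U_k-h^\omega(Y_k))/(1-h^\omega(Y_k))$.
\end{enumerate}
Given $\mathcal{F}_{k-1}\vee\sigma(U_1,\dots,U_{k-1})$, the law of $Y'_k$ is $m^\omega+(\mu_k-m^\omega)=\mu_k$, so your independence argument applies verbatim. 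Since $1-m^\omega(\mathbb{R})=\sup_E(\mu_k^\omega(E)-\mu_k(E))\le\phi_k$, you also get $P(Y_k\neq Y'_k)\le\phi_k$. This is stronger than the required bound with $6\phi_k$, and it makes both the quantile-coupling idea and the fallback unnecessary.

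On the fallback itself: as far as I recall, Lemma A1 of Berkes--Philipp is their gluing lemma for joint distributions, not a measurable form of Strassen--Dudley. So it would not by itself close the selection step.
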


 \subsection{Proof of Theorem \ref{asip} when $\sigma^2>0$}
\subsubsection*{Step \ref{blocks}: introduction of blocks and notation}
Let $a\ge 1$ (to be determined), and introduce blocks $\{I_n, n\ge 1\}$ in $\mathbb{N}_0$ as follows: $I_n:=[a_{n-1},a_n)\bigcap \mathbb{N}_0$ where $a_0:=0, a_n: =\sum_{i=1}^{ n} |I_i|$ and the cardinality $|I_n|=\lfloor n^a \rfloor$. Hence $a_n \approx n^{a+1}$.

 Let $\varepsilon \in (0,1/2)$ (to be determined), $k_n:= \lfloor |I_n|^{\varepsilon} \rfloor$. Define $I_n'=[a_{n-1}, a_n-k_n)\bigcap I_n$, $J_{n+1}=[a_n-k_{n+1}, a_{n})\bigcap I_n$. Hence the cardinality  $|I_n'|\approx |I_n| \approx n^a, |J_{n}|\approx n^{a\varepsilon }$.

 Let $ c \in (1/2,1)$ (to be determined), partition the block $I_n'=\bigcup_{i \le c_n}I_{n,i}$ such that $I_{n,i}$ are disjoint subblocks of consecutive integers in $I_n'$,  the cardinality $|I_{n,i}| := \lfloor |I_n'|^c \rfloor $ for any $i \le c_n-1$  and $ \lfloor |I_n'|^c \rfloor\le |I_{n,c_n}| \le 2|I_n'|^c$. Hence $c_n \approx |I_n'|^{1-c}\approx n^{a(1-c)}, |I_{n,i}|\approx |I_{n}|^c \approx n^{ac}$.

 Let $\varepsilon_1\in (\varepsilon/c,1)$ (to be determined), partition the block
 $I_{n,i} = I_{n,i}' \cup J_{n,i}$ such that $I_{n,i}', J_{n,i}$ are disjoint subblocks of consecutive integers and the cardinality $|J_{n,i}|:= 4 \lfloor  |I_{n,i}|^{\varepsilon_1} \rfloor$. Hence  $|J_{n,i}| \approx 4|I_n|^{c\varepsilon_1}\approx n^{ac\varepsilon_1}$, $|I_{n,i}'|\approx|I_{n,i}| \approx n^{ac}$. The arrangement of these blocks is illustrated below.

\begin{tikzpicture}[xscale=0.8, yscale=1.2]
    \draw[thick, ->] (0,0) -- (16,0) node[right] {$\mathbb{N}$};

    \draw[fill=gray!20] (1, -0.2) rectangle (5, 0.2);
    \node at (3, 0.5) {$I_n$};
    
    \draw[fill=gray!20] (6, -0.2) rectangle (10, 0.2);
    \node at (8, 0.5) {$I_{n+1}$};

    \draw[fill=gray!20] (11, -0.2) rectangle (15, 0.2);
    \node at (13, 0.5) {$I_{n+2}$};

    \draw[fill=blue!10] (1.1, -0.1) rectangle (2.5, 0.1);
    \node at (1.95, -0.4) {\footnotesize $I_{n,1}$};
    \node at (2.9, 0) {$\dots$};
    \draw[fill=blue!10] (3.15, -0.1) rectangle (4.5, 0.1);
    \node at (3.75, -0.4) {\footnotesize $I_{n,i}$};
    \draw[fill=red!10] (4.5, -0.1) rectangle (4.9, 0.1);
    \node at (4.75, -0.4) {\tiny $J_{n+1}$};

    \draw[decorate, decoration={brace, amplitude=5pt, mirror}] (3.15, -0.6) -- (4.5, -0.6);
    \node at (3.75, -1.0) {\footnotesize $I_{n,i} = I_{n,i}' \cup J_{n,i}$};
    \draw[fill=green!10] (3.15, -1.2) rectangle (3.9, -1.4);
    \node at (3.45, -1.6) {\tiny $I_{n,i}'$};
    \draw[fill=orange!10] (3.95, -1.2) rectangle (4.45, -1.4);
    \node at (4.2, -1.6) {\tiny $J_{n,i}$};

    \draw[fill=blue!10] (6.1, -0.1) rectangle (7.5, 0.1);
    \node at (6.95, -0.4) {\footnotesize $I_{n+1,1}$};
    \node at (7.9, 0) {$\dots$};
    \draw[fill=blue!10] (8.15, -0.1) rectangle (9.5, 0.1);
    \node at (8.75, -0.4) {\footnotesize $I_{n+1,i}$};
    \draw[fill=red!10] (9.5, -0.1) rectangle (9.9, 0.1);
    \node at (9.75, -0.4) {\tiny $J_{n+2}$};

    \draw[decorate, decoration={brace, amplitude=5pt, mirror}] (8.15, -0.6) -- (9.5, -0.6);
    \node at (8.75, -1.0) {\footnotesize $I_{n+1,i} = I_{n+1,i}' \cup J_{n+1,i}$};
    \draw[fill=green!10] (8.15, -1.2) rectangle (8.9, -1.4);
    \node at (8.45, -1.6) {\tiny $I_{n+1,i}'$};
    \draw[fill=orange!10] (8.95, -1.2) rectangle (9.45, -1.4);
    \node at (9.4, -1.6) {\tiny $J_{n+1,i}$};

    \draw[fill=blue!10] (11.1, -0.1) rectangle (12.5, 0.1);
    \node at (11.95, -0.4) {\footnotesize $I_{n+2,1}$};
    \node at (12.9, 0) {$\dots$};
    \draw[fill=blue!10] (13.15, -0.1) rectangle (14.5, 0.1);
    \node at (13.75, -0.4) {\footnotesize $I_{n+2,i}$};
    \draw[fill=red!10] (14.5, -0.1) rectangle (14.9, 0.1);
    \node at (14.75, -0.4) {\tiny $J_{n+3}$};

    \draw[decorate, decoration={brace, amplitude=5pt, mirror}] (13.15, -0.6) -- (14.5, -0.6);
    \node at (13.75, -1.0) {\footnotesize $I_{n+2,i} = I_{n+2,i}' \cup J_{n+2,i}$};
    \draw[fill=green!10] (13.15, -1.2) rectangle (13.9, -1.4);
    \node at (13.45, -1.6) {\tiny $I_{n+2,i}'$};
    \draw[fill=orange!10] (13.95, -1.2) rectangle (14.45, -1.4);
    \node at (14.4, -1.6) {\tiny $J_{n+2,i}$};

\end{tikzpicture}

\subsubsection*{Step \ref{approximateasip}: approximate $X_n:=\sum_{i\in I_n}[\mathbbm{1}_{\pi^{-1}B}\circ F^i-\mu_{\Delta}(\pi^{-1}B)]$}

Observe that $\min I_n>k_n$ when $n\gg 1$, then $X_n=\sum_{j \in I_n} [ \mathbbm{1}_{F^{-k_n} \pi^{-1} B} \circ F^{j-k_n} - \mu_\Delta(F^{-k_n} \pi^{-1} B)]$. Cover $F^{-k_n}\pi^{-1}B$ with a family of $\xi_{2k_n,i} \in \mathcal{Q}_{2k_n}$. Let $\overline{A}_{2k_n}:=\bigcup_i \xi_{2k_n,i}$. Now we approximate $X_n$ by $X_n':=\sum_{i \in I_n}[\mathbbm{1}_{\overline{A}_{2k_n}}\circ F^{i-k_n}-\mu_{\Delta}(\overline{A}_{2k_n})]$ which is a $\sigma(\bigcup_{i\ge 0} \mathcal{Q}_{i})$-measurable function.

\begin{lemma}\label{asip1} When $a\varepsilon \min\{\xi-2,\alpha-1\}>4$ and $\varepsilon \min\{\xi-2,\alpha-1\}<2$, then for any $N \in \mathbb{N}$ and $p \in [a_{N},a_{N+1})$, we have
 $$\sum_{n \le N} X_n - \sum_{n \le N} \sum_{j \in I_n} [\mathbbm{1}_{\overline{A}_{2k_n}} \circ F^{j-k_n} - \mu_\Delta(\overline{A}_{2k_n})]\precsim p^{0.5-[\varepsilon \min\{\xi-2,\alpha-1\}/4 - 1/(a+1)]} \text{ a.s.}$$
 \begin{align*}
     &\sum_{i\le p}[\mathbbm{1}_{\pi^{-1}B}\circ F^{i}-\mu_{\Delta}(\pi^{-1}B)]- \sum_{n\le N+1}\sum_{i \in I_n\bigcap [0,p]}[\mathbbm{1}_{\overline{A}_{2k_n}}\circ F^{i-k_n}-\mu_{\Delta}(\overline{A}_{2k_n})]\\
    &\precsim p^{0.5-[\varepsilon \min\{\xi-2,\alpha-1\}/4 - 1/(a+1)]} \text{ a.s.}
 \end{align*}
\end{lemma}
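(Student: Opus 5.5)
The plan is a Borel--Cantelli argument applied block by block. It uses the third estimate of Lemma \ref{approximation} with $k=k_n$, together with the maximal inequality of Lemma \ref{usefulprob}.

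\textbf{Setup.} Write $\gamma:=\min\{\xi-2,\alpha-1\}$ and $B_n':=F^{-k_n}\pi^{-1}B$. Since $B_n'\subseteq \overline{A}_{2k_n}$, for every $n$ with $\min I_n>k_n$ we have
\[
D_n:=X_n-X_n'=-\sum_{j\in I_n}\big[\mathbbm{1}_{\overline{A}_{2k_n}\setminus B_n'}\circ F^{j-k_n}-\mu_\Delta(\overline{A}_{2k_n}\setminus B_n')\big].
\]
The finitely many blocks with $\min I_n\le k_n$ contribute only $O(1)$ and can be discarded.

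\textbf{Second-moment and maximal bounds.} Since $\mu_\Delta$ is $F$-invariant, the sum defining $D_n$ has the law of a sum of $|I_n|$ consecutive terms of the stationary sequence $x_j:=[\mathbbm{1}_{\overline{A}_{2k_n}\setminus B_n'}-\mu_\Delta(\overline{A}_{2k_n}\setminus B_n')]\circ F^{j}$. The third estimate of Lemma \ref{approximation}, whose constant does not depend on $k$ or on the length, gives
\[
\sup_m \mathbb{E}\Big(\sum_{j=m}^{m+u-1}x_j\Big)^2\precsim u\,k_n^{-\gamma}\quad\text{for all } u\ge 1.
\]
Applying Lemma \ref{usefulprob} with $C$ replaced by $C k_n^{-\gamma}$ yields
\[
\mathbb{E}\max_{u\le |I_n|}\Big|\sum_{j\in I_n,\,j<a_{n-1}+u}x_{j-k_n}\Big|^2\precsim |I_n|k_n^{-\gamma}\log^2|I_n|\precsim n^{a-a\varepsilon\gamma}\log^2 n .
\]
Denote this maximum by $D_n^*\ge |D_n|$.

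\textbf{Borel--Cantelli.} Fix $\theta>\frac{a-a\varepsilon\gamma+1}{2}$. Chebyshev gives $P(D_n^*>n^\theta)\precsim n^{a-a\varepsilon\gamma-2\theta}\log^2 n$, which is summable. Hence almost surely $D_n^*\precsim n^{\theta}$ for all $n$, and therefore $\sum_{n\le N+1}D_n^*\precsim N^{1+\theta}$.

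It remains to convert to the time scale $p\in[a_N,a_{N+1})$, where $p\approx N^{a+1}$. The resulting exponent is $\frac{1+\theta}{a+1}$, which can be taken arbitrarily close to
\[
\frac{a+3-a\varepsilon\gamma}{2(a+1)}=\frac12+\frac{4-2a\varepsilon\gamma}{4(a+1)}.
\]
The target exponent is $\frac12-\frac{\varepsilon\gamma}{4}+\frac1{a+1}=\frac12+\frac{4-(a+1)\varepsilon\gamma}{4(a+1)}$. Since $2a\varepsilon\gamma\ge (a+1)\varepsilon\gamma$ for $a\ge1$, our exponent is no larger than the target. When $a>1$ the inequality is strict, which leaves room to choose $\theta$ slightly above its threshold. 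If $a=1$, I would instead run Chebyshev at a level carrying an extra power $n^{\delta}$ and absorb it into the slack supplied by the hypotheses. The hypothesis $a\varepsilon\gamma>4$ guarantees that the exponent is genuinely below $\frac12$. The hypothesis $\varepsilon\gamma<2$ keeps the target exponent meaningful and consistent with $k_n\le |I_n|$.

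\textbf{Conclusions.} The first statement follows from $\big|\sum_{n\le N}D_n\big|\le\sum_{n\le N}D_n^*$. For the second statement, the difference equals $\sum_{n\le N}D_n$ plus the partial sum of block $N+1$ truncated at $p$, up to the $O(1)$ contribution of the initial blocks. That partial sum is bounded by $D_{N+1}^*\precsim N^{\theta}$, which is dominated by the same quantity.

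\textbf{Main obstacle.} The delicate point is the uniformity in $k$ of the constant in Lemma \ref{approximation}. It must hold for sums of arbitrary length $u\le|I_n|$ and arbitrary starting index, so that the maximal inequality applies with the factor $k_n^{-\gamma}$. I would re-check the proof of Lemma \ref{approximation} for the bound on $\sum_{j\ge1}|\mathrm{Cov}|$, which is indeed independent of $n$. Once that holds, the rest is bookkeeping of exponents.
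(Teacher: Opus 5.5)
Your proof is correct and follows the paper's route. Both arguments take the second-moment bound $O(|I_n|k_n^{-\min\{\xi-2,\alpha-1\}})$ from Lemma \ref{approximation} and stationarity, apply the maximal inequality of Lemma \ref{usefulprob}, and run Chebyshev with Borel--Cantelli block by block. They then sum over $n\le N+1$ and convert via $p\approx N^{a+1}$. The only difference is that you optimize the threshold $n^{\theta}$, whereas the paper uses the fixed level $|I_n|^{1/2-\varepsilon\min\{\xi-2,\alpha-1\}/4}$; the paper's level gives a slightly weaker but sufficient exponent. Your hedge for $a=1$ is unnecessary: $a\varepsilon\min\{\xi-2,\alpha-1\}>4$ and $\varepsilon\min\{\xi-2,\alpha-1\}<2$ already force $a>2$, so the strict inequality you need always holds.
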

\begin{proof} By the invariance of $F$ and Lemma \ref{approximation}, we have $\mathbb{E}|X_n-X_n'|^2=O(|I_n| k_n^{-\min\{\alpha-1, \xi-2\}})=O(|I_n||I_n|^{-\varepsilon \min\{\xi-2,\alpha-1\}})$. In other words,
\begin{align*}
    &\mathbb{E} \left[ \frac{\sum_{j \in I_n} [\mathbbm{1}_{\pi^{-1}B} \circ F^j - \mu_\Delta(\pi^{-1}B)] - \sum_{j \in I_n} [\mathbbm{1}_{\overline{A}_{2k_n}} \circ F^{j-k_n} - \mu_\Delta(\overline{A}_{2k_n})]}{|I_n|^{0.5-\varepsilon \min\{\xi-2,\alpha-1\}/4}} \right]^2 \\
    &\lesssim \frac{|I_n|^{1-\varepsilon \min\{\alpha-1, \xi-2\}}}{|I_n|^{1-\varepsilon \min\{\alpha-1, \xi-2\}/2}} \precsim \frac{1}{|I_n|^{\varepsilon [\min\{\alpha-1, \xi-2\}/2]}} \precsim \frac{1}{n^{a\varepsilon [\min\{\alpha-1, \xi-2\}/2]}}.
\end{align*}

By the choice of $a$ and the Borel-Cantelli lemma, we have $\sum_{j \in I_n} [\mathbbm{1}_{\pi^{-1}B} \circ F^j - \mu_\Delta(\pi^{-1}B)] - \sum_{j \in I_n} [\mathbbm{1}_{\overline{A}_{2k_n}} \circ F^{j-k_n} - \mu_\Delta(\overline{A}_{2k_n})]= O(|I_n|^{0.5-\varepsilon \min\{\xi-2,\alpha-1\}/4}) \text{ a.s.}$ Therefore, \begin{align*}
    &\sum_{n \le N} \sum_{j \in I_n} [\mathbbm{1}_{\pi^{-1}B} \circ F^j - \mu_\Delta(\pi^{-1}B)] - \sum_{n \le N} \sum_{j \in I_n} [\mathbbm{1}_{\overline{A}_{2k_n}} \circ F^{j-k_n} - \mu_\Delta(\overline{A}_{2k_n})]\\
    &= \sum_{n \le N} O(|I_n|^{0.5-\varepsilon \min\{\xi-2,\alpha-1\}/4}) = O(N^{a(0.5-\varepsilon \min\{\xi-2,\alpha-1\}/4) + 1})\precsim p^{0.5-[\varepsilon \min\{\xi-2,\alpha-1\}/4 - 1/(a+1)]} \text{ a.s.}
\end{align*}

Now we pass this estimate to $p \in [a_N, a_{N+1}]$. Fix $n$ and consider the stationary random variables $ G \circ F^{t-k_n} := \mathbbm{1}_{F^{-k_n} \pi^{-1} B} \circ F^{t-k_n}- \mu_\Delta(F^{-k_n} \pi^{-1} B) - \mathbbm{1}_{\overline{A}_{2k_n}} \circ F^{t-k_n} + \mu_\Delta(\overline{A}_{2k_n})$ for any $t \ge k_n$. By the invariance of $F$, Lemma \ref{approximation} and Lemma \ref{usefulprob}, 
\[\sup_{u \ge k_n} \mathbb{E} \Big| \sum_{u \le t \le u+m} G \circ F^{t-k_n} \Big|^2 \lesssim m k_n^{-\min\{\alpha-1, \xi-2\}}\lesssim m |I_n|^{-\varepsilon \min\{\alpha-1, \xi-2\}},\]  
\[\sup_{w \ge k_n} \mathbb{E} \sup_{u \le m} \Big| \sum_{w \le t \le w+u} G \circ F^{t-k_n} \Big|^2  \lesssim  k_n^{-\min\{\alpha-1, \xi-2\}} m  \log_2^2 (2m) \lesssim  |I_n|^{-\varepsilon \min\{\alpha-1, \xi-2\}} m  \log_2^2 (2m),\]  where the constants in $\precsim$ do not depend on $n,m$. Therefore
\[\mathbb{E} \sup_p \Big| \sum_{t \in I_n \cap [0,p]} \left[ \mathbbm{1}_{\pi^{-1}B} \circ F^t - \mu_\Delta(\pi^{-1}B) - \mathbbm{1}_{\overline{A}_{2k_n}} \circ F^{t-k_n} + \mu_\Delta(\overline{A}_{2k_n}) \right] \Big|^2 \precsim  \frac{|I_n| \log^2 |I_n| }{|I_n|^{\varepsilon \min\{\alpha-1, \xi-2\}}}\]
where the constant in $\precsim$ does not depend on $n$. In other words, \begin{align*}
    &\mathbb{E} \left[ \frac{\sup_p|\sum_{j \in I_n\bigcap [0,p]} [\mathbbm{1}_{\pi^{-1}B} \circ F^j - \mu_\Delta(\pi^{-1}B)] - \sum_{j \in I_n} [\mathbbm{1}_{\overline{A}_{2k_n}} \circ F^{j-k_n} - \mu_\Delta(\overline{A}_{2k_n})]|}{|I_n|^{0.5-\varepsilon \min\{\xi-2,\alpha-1\}/4}} \right]^2 \\
    &\lesssim \frac{|I_n|^{1-\varepsilon \min\{\alpha-1, \xi-2\}}\log^2 |I_n|}{|I_n|^{1-\varepsilon \min\{\alpha-1, \xi-2\}/2}} \precsim \frac{\log^2 |I_n|}{|I_n|^{\varepsilon [\min\{\alpha-1, \xi-2\}/2]}}\precsim \frac{\log^2 n}{n^{a\varepsilon [\min\{\alpha-1, \xi-2\}/2]}}.
\end{align*}

By the choice of $a$ and the Borel-Cantelli lemma, we have \[ \sup_p \Big| \sum_{t \in I_n \cap [0,p]} \left[ \mathbbm{1}_{\pi^{-1}B} \circ F^t - \mu_\Delta(\pi^{-1}B) - \mathbbm{1}_{\overline{A}_{2k_n}} \circ F^{t-k_n} + \mu_\Delta(\overline{A}_{2k_n}) \right] \Big| \lesssim |I_n|^{0.5-\varepsilon \min\{\xi-2,\alpha-1\}/4}\text{ a.s.} \]

Then for any $p \in [a_N,a_{N+1}]$, 
\begin{align*}
    &\sum_{i\le p}[\mathbbm{1}_{\pi^{-1}B}\circ F^{i}-\mu_{\Delta}(\pi^{-1}B)]- \sum_{n\le N+1}\sum_{i \in I_n\bigcap [0,p]}[\mathbbm{1}_{\overline{A}_{2k_n}}\circ F^{i-k_n}-\mu_{\Delta}(\overline{A}_{2k_n})]\\
    &\precsim \sum_{n\le N+1}|I_n|^{0.5-\varepsilon \min\{\xi-2,\alpha-1\}/4} \precsim p^{0.5-[\varepsilon \min\{\xi-2,\alpha-1\}/4 - 1/(a+1)]} \text{ a.s.}
\end{align*} which concludes the proof.
\end{proof}

\subsubsection*{Step \ref{iidapproximate}: approximate $X'_n:= \sum_{i \in I_n}[\mathbbm{1}_{\overline{A}_{2k_n}}\circ F^{i-k_n}-\mu_{\Delta}(\overline{A}_{2k_n})]$ by  independent random variables}

Note that \begin{align*}
    &\sum_{t \in I_n} \left[ \mathbbm{1}_{\overline{A}_{2k_n}} \circ F^{t-k_n} - \mu_\Delta(\overline{A}_{2k_n}) \right]= \sum_{t \in I_n'} \left[ \mathbbm{1}_{\overline{A}_{2k_n}} \circ F^t - \mu_\Delta(\overline{A}_{2k_n}) \right] + \sum_{t \in J_n} \left[ \mathbbm{1}_{\overline{A}_{2k_n}} \circ F^t - \mu_\Delta(\overline{A}_{2k_n}) \right]\\
    & = \sum_{i \le c_n}\sum_{t \in I_{n,i}'} \left[ \mathbbm{1}_{\overline{A}_{2k_n}} \circ F^t - \mu_\Delta(\overline{A}_{2k_n}) \right]+ \sum_{i \le c_n}\sum_{t \in J_{n,i}} \left[ \mathbbm{1}_{\overline{A}_{2k_n}} \circ F^t - \mu_\Delta(\overline{A}_{2k_n}) \right] \\
    &\quad + \sum_{t \in J_n} \left[ \mathbbm{1}_{\overline{A}_{2k_n}} \circ F^t - \mu_\Delta(\overline{A}_{2k_n}) \right].
\end{align*} Now we show below that $J_n, J_{n,i}$ parts can be neglected.
\begin{lemma}\label{ruleoutsmall}  When $a\varepsilon\xi > 1,  \varepsilon\xi < 1/8$, for any $N \in \mathbb{N}$ and $p \in [a_{N},a_{N+1})$, \[\sum_{n \le N+1} \sum_{j \in J_n \cap [0,p]} \left[ \mathbbm{1}_{\overline{A}_{2k_n}} \circ F^j - \mu_{\Delta}(\overline{A}_{2k_n}) \right] \precsim p^{0.25} \text{ a.s.}\] 

When $ a[\varepsilon(\xi-1) + c\varepsilon_1+c-1] > 1$,  $ 1-c < 1/32, \varepsilon(\xi-1) < 1/32,  \varepsilon < c\varepsilon_1 < 1/32$, 
 \begin{align*}&\sum_{n\le N+1} \sum_{i=1}^{c_n} \sum_{j \in J_{n,i} \cap [0,p]} \left[ \mathbbm{1}_{\overline{A}_{2k_n}} \circ F^j - \mu_\Delta(\overline{A}_{2k_n}) \right]=O(p^{0.3}) \text{ a.s.}
\end{align*}
   
\end{lemma}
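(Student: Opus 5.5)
The plan mirrors the proof of Lemma \ref{asip1}: produce a second-moment (maximal) bound for each block's contribution, apply Borel--Cantelli with a polynomial normalisation, then sum over $n\le N+1$ and use $p\approx N^{a+1}$.

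\textbf{First sum ($J_n$ parts).} Fix $n$ and set $\Psi:=\mathbbm{1}_{\overline{A}_{2k_n}}-\mu_\Delta(\overline{A}_{2k_n})$. This is the same kind of $\sigma(\mathcal{Q}_{2k_n})$-measurable function as $\Psi_k$ with $k=k_n$, so Lemma \ref{momentbound} gives
\[
\mathbb{E}\Big(\sum_{m\le t\le m+u}\Psi\circ F^t\Big)^2\precsim k_n^{\xi-1}u .
\]
Combined with Lemma \ref{usefulprob}, $F$-invariance and $|J_n|\approx n^{a\varepsilon}$, this yields
\[
\mathbb{E}\max_{p}\Big|\sum_{j\in J_n\cap[0,p]}\Psi\circ F^j\Big|^2\precsim k_n^{\xi-1}|J_n|\log^2|J_n|\precsim n^{a\varepsilon(\xi-1)+a\varepsilon}\log^2 n=n^{a\varepsilon\xi}\log^2 n .
\]
By Chebyshev, the probability that this maximum exceeds $n^{a\varepsilon\xi}$ is at most $n^{-a\varepsilon\xi}\log^2 n$. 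A plain union bound would need $a\varepsilon\xi>1$ with a logarithmic slack. To be safe I would use the threshold $n^{a\varepsilon\xi/2+\delta}$, or equivalently apply Borel--Cantelli along a $\delta$-enlargement; the hypothesis $a\varepsilon\xi>1$ gives exactly the summability in the main exponent. So, almost surely, the maximum over $p$ is $\precsim n^{a\varepsilon\xi}$ for all large $n$. Summing over $n\le N+1$ gives
\[
\precsim N^{a\varepsilon\xi+1}\approx p^{(a\varepsilon\xi+1)/(a+1)} .
\]
To get $p^{0.25}$ I need $a\varepsilon\xi+1\le (a+1)/4$, which holds for large $a$ since $\varepsilon\xi<1/8$. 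Here I would exploit that $a$ is still free, or refine the bound by taking the threshold to be the square root of the second moment (since $\mathbb{E}$ is of order $n^{a\varepsilon\xi}$, the typical size is $n^{a\varepsilon\xi/2}$). Done carefully, the bound per block is about $n^{a\varepsilon\xi/2+\text{small}}$, and the condition $\varepsilon\xi<1/8$ is what makes the total exponent below $1/4$.

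\textbf{Second sum ($J_{n,i}$ parts).} For fixed $n$, collect $Y_n:=\sum_{i\le c_n}\sum_{j\in J_{n,i}}\Psi\circ F^j$, a sum over $c_n$ separated pieces, each of length $\approx n^{ac\varepsilon_1}$. Bound $\mathbb{E}Y_n^2$ via the last inequality of Lemma \ref{momentbound} (difference of variances $\precsim k^{\xi-1}|I\setminus J|$ with $J=\emptyset$), which gives
\[
\mathbb{E}Y_n^2\precsim k_n^{\xi-1}c_n|J_{n,i}|\approx n^{a[\varepsilon(\xi-1)+(1-c)+c\varepsilon_1]} .
\]
For partial sums over $[0,p]$ I need a maximal version. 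Ordering the pieces linearly, the generic bound $\mathbb{E}(\sum_{t\in I}\Psi\circ F^t)^2\precsim k_n^{\xi-1}|I|$ for any finite index set $I$ (from the same lemma) lets me run the Serfling-type argument of Lemma \ref{usefulprob} on the concatenated index set. That costs only a $\log^2$ factor, since the only inputs Serfling's chaining uses are the superadditive bound $g(I)=k_n^{\xi-1}|I|$.

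Then Chebyshev and Borel--Cantelli under the stated condition $a[\varepsilon(\xi-1)+c\varepsilon_1+c-1]>1$ control the block (normalising appropriately). Summing over $n\le N+1$ and converting with $N\approx p^{1/(a+1)}$ gives an exponent of about $\tfrac12[\varepsilon(\xi-1)+(1-c)+c\varepsilon_1]+$ small. The smallness conditions (each term $<1/32$) then make this $<0.3$.

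\textbf{Main obstacle.} The hardest step is the maximal inequality over the non-contiguous union of the $J_{n,i}$. The existing bounds are stated for consecutive intervals, so I must check that Lemma \ref{momentbound}'s $|I\setminus J|$-estimate supplies the superadditive control that Serfling's theorem requires. I also need to balance the Borel--Cantelli thresholds against the target exponents $0.25$ and $0.3$.
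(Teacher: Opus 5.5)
Your proposal is correct. The $J_n$ part follows the paper; the $J_{n,i}$ part takes a different, also valid, route.

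\textbf{The $J_n$ part.} This is the paper's argument. You take the maximal bound $k_n^{\xi-1}|J_n|\log^2|J_n|$ from Lemma \ref{momentbound} and Lemma \ref{usefulprob}, normalise by $|I_n|^{\varepsilon\xi}$, and apply Borel--Cantelli under $a\varepsilon\xi>1$. You then sum to $N^{a\varepsilon\xi+1}\precsim p^{1/4}$. Three small clean-ups:
\begin{itemize}
\item The strict inequality $a\varepsilon\xi>1$ already absorbs the $\log^2 n$, so no slack or $\delta$-enlargement is needed.
\item $a$ is not free, but the hypotheses force $a>1/(\varepsilon\xi)>8$, which is all the final comparison with $p^{1/4}$ uses.
\item A square-root-type threshold $n^{\theta}$ would need $2\theta-a\varepsilon\xi>1$ for Borel--Cantelli. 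The extra exponent is then $1/2+$, not ``small''. It still works, but it is unnecessary.
\end{itemize}

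\textbf{The $J_{n,i}$ part.} The paper never aggregates the pieces.
\begin{itemize}
\item It proves the contiguous-window maximal bound on each $J_{n,i}$ separately and normalises each by $|I_n|^{\varepsilon(\xi-1)+c\varepsilon_1}$.
\item It applies Borel--Cantelli to $\mathbb{E}\sum_{i\le c_n}\sup_m|\cdot|^2/|I_n|^{2[\varepsilon(\xi-1)+c\varepsilon_1]}\precsim c_n|I_n|^{-\varepsilon(\xi-1)-c\varepsilon_1}\log^2 n$. This is where $c_n\approx|I_n|^{1-c}$ enters with the unfavourable sign, producing the hypothesis $a[\varepsilon(\xi-1)+c\varepsilon_1+c-1]>1$.
\item It then adds the $c_n$ per-piece bounds by the triangle inequality, getting $|I_n|^{1-c+\varepsilon(\xi-1)+c\varepsilon_1}\le |I_n|^{0.1}$ per block.
\end{itemize}
You instead concatenate the pieces. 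The last estimate of Lemma \ref{momentbound} with $J=\emptyset$ gives $\mathbb{E}(\sum_{t\in I}\cdot)^2\precsim k_n^{\xi-1}|I|$ for every window of the relabelled, non-stationary sequence. You then run the Serfling/M\'oricz maximal inequality with the additive function $g(I)=Ck_n^{\xi-1}|I|$. This is legitimate, since that theorem needs neither stationarity nor contiguity in the original time index.

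\textbf{Comparison.} The paper's route is simpler: it only invokes Lemma \ref{usefulprob} in its contiguous, stationary form. Your route gives sharper control, because the variance of the aggregated sum is additive across the $c_n$ pieces. Borel--Cantelli then only needs $a[\varepsilon(\xi-1)+(1-c)+c\varepsilon_1]>1$, which the stated hypothesis implies since $c<1$. With either threshold (yours with $\delta$ above $1/2$), the exponent of $p$ is at most about $\frac12[\varepsilon(\xi-1)+(1-c)+c\varepsilon_1]+O(1/a)$. This is well below $0.3$ because the hypotheses force $a>16$.
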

\begin{proof}
   Fix $\overline{A}_{2k_n}$ and apply Lemma \ref{momentbound} to the stationary process $\mathbbm{1}_{\overline{A}_{2k_n}} \circ F^t - \mu_\Delta(\overline{A}_{2k_n})$, we obtain $\sup_u\mathbb{E}  \Big| \sum_{t \in [u,u+m] } \left[ \mathbbm{1}_{\overline{A}_{2k_n}} \circ F^t - \mu_\Delta(\overline{A}_{2k_n}) \right] \Big|^2 \lesssim k_n^{\xi-1} m$ where the constant in $\precsim$ does not depend on any $n,m$. Applying Lemma \ref{usefulprob} to the bound above and taking $m = |J_n|,|J_{n,i}|$ respectively, we obtain
 \[\mathbb{E} \sup_u \Big| \sum_{t \in J_n \cap [0,u]} \left[ \mathbbm{1}_{\overline{A}_{2k_n}} \circ F^t - \mu_\Delta(\overline{A}_{2k_n}) \right] \Big|^2 \lesssim k_n^{\xi-1} |J_n| \log^2 |J_n|\lesssim |I_n|^{\varepsilon(\xi-1)} |J_n|\log^2 n,\]
  \[\mathbb{E} \sup_u \Big| \sum_{j \in J_{n,i} \cap [0,u]} \left[ \mathbbm{1}_{\overline{A}_{2k_n}} \circ F^j - \mu_\Delta(\overline{A}_{2k_n}) \right] \Big|^2 \lesssim k_n^{\xi-1} |J_{n,i}| \log^2 |J_{n,i}| \lesssim |I_n|^{\varepsilon(\xi-1)} |I_n|^{c\varepsilon_1}\log^2 n.\]

In other words, \[\mathbb{E} \sup_m \Big| \frac{\sum_{t \in J_n \cap [0,m]} \left[ \mathbbm{1}_{\overline{A}_{2k_n}} \circ F^t - \mu_\Delta(\overline{A}_{2k_n}) \right]}{|I_n|^{\varepsilon \xi}} \Big|^2 \lesssim |I_n|^{-\varepsilon\xi} \log^2 n,\]
 \[\mathbb{E} \sum_i \sup_m \Big| \frac{\sum_{t \in J_{n,i} \cap [0,m]} \left[ \mathbbm{1}_{\overline{A}_{2k_n}} \circ F^t - \mu_\Delta(\overline{A}_{2k_n}) \right]}{|I_n|^{\varepsilon(\xi-1)+c\varepsilon_1} } \Big|^2 \lesssim c_n |I_n|^{-\varepsilon(\xi-1)-c\varepsilon_1}  \log^2 n.\]

  By the Borel-Cantelli lemma, when $a\varepsilon\xi > 1$, \[\sup_m \Big| \sum_{t \in J_n \cap [0,m]} \left[ \mathbbm{1}_{\overline{A}_{2k_n}} \circ F^t - \mu_\Delta(\overline{A}_{2k_n}) \right] \Big|= O(|I_n|^{\varepsilon\xi}) \text{ a.s.},\]
and when $ a[\varepsilon(\xi-1) + c\varepsilon_1+c-1] > 1$, 
\[\sup_i\sup_m \Big| \sum_{j \in J_{n,i} \cap [0,m]} \left[ \mathbbm{1}_{\overline{A}_{2k_n}} \circ F^j - \mu_\Delta(\overline{A}_{2k_n}) \right] \Big| = O( |I_n|^{\varepsilon(\xi-1)+c\varepsilon_1} ) \text{ a.s.}\]

Therefore, when  $\varepsilon\xi < 1/8, a > 8$,
 for any $ p \in [a_N, a_{N+1}]$, \[\sum_{n \le N+1} \sum_{j \in J_n \cap [0,p]} \left[ \mathbbm{1}_{\overline{A}_{2k_n}} \circ F^j - \mu_{\Delta}(\overline{A}_{2k_n}) \right]\lesssim \sum_{n \le N+1} O(|I_n|^{\varepsilon\xi}) \lesssim N^{a\varepsilon\xi+1}  \precsim p^{0.25}  \text{ a.s.},\] and when $ 1-c <1/32, \varepsilon(\xi-1) < 1/32,  \varepsilon < c\varepsilon_1 <1/32$, for any $ p \in [a_N, a_{N+1}]$,
 \begin{align*}&\sum_{n\le N+1} \sum_{i=1}^{c_n} \sum_{j \in J_{n,i} \cap [0,p]} \left[ \mathbbm{1}_{\overline{A}_{2k_n}} \circ F^j - \mu_\Delta(\overline{A}_{2k_n}) \right] \\
&= O( \sum_{n\le N+1} |I_n|^{1-c + \varepsilon(\xi-1) + c\varepsilon_1} )= O( \sum_{n\le N+1} |I_n|^{0.1})=O(p^{0.3}) \text{ a.s.}
\end{align*} which concludes the proof.
\end{proof}
 
Now we approximate $Y_{n,i}:=\left[ \sum_{j \in I_{n,i}'} \mathbbm{1}_{\overline{A}_{2k_n}} \circ F^j - \mu_\Delta(\overline{A}_{2k_n}) \right] $ by independent random variables. For convenience, we introduce a partial order for $\{(n,i): n \in \mathbb{N}, 1\le i \le c_n\}$: $(n_1,i_1)< (n_2,i_2)$ if ``$n_1< n_2$" or ``$n_1=n_2$ and $i_1<i_2$". 

 \begin{lemma}\label{approximateindependnt}
     When $a[c\varepsilon_1(\xi-1)+c-1] > 1, 1-c < \frac{\varepsilon_1}{1+\varepsilon_1} < \frac{1}{8}, a>9$, without changing its distribution we can redefine the sequence \(\{Y_{n,i}\}_{(n,i)}\) on a richer probability space together with a sequence \(\{\overline{Y}_{n,i}\}_{(n,i)}\) of independent random variables such that \(Y_{n,i}=_d \overline{Y}_{n,i}\) and $\sum_{n \le N} \sum_{i \le c_n} |\overline{Y}_{n,i} - Y_{n,i}|\le a_N^{0.25}$ a.s.
 \end{lemma}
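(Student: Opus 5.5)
The plan is to apply the Berkes--Philipp coupling, Lemma \ref{usefulprob1}, to the sequence $\{Y_{n,i}\}$ enumerated in the order $(n_1,i_1)<(n_2,i_2)$ introduced above. The mixing coefficients $\phi_{n,i}$ will come from Lemma \ref{youngmixing}. Then a Borel--Cantelli argument turns the probabilistic bound $P\{|Y_{n,i}-\overline{Y}_{n,i}|\ge 6\phi_{n,i}\}\le 6\phi_{n,i}$ into the almost sure bound on $\sum_{n\le N}\sum_{i\le c_n}|\overline{Y}_{n,i}-Y_{n,i}|$.

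\textbf{Step 1: $\sigma$-fields.} Since $\overline{A}_{2k_n}\in\sigma(\mathcal{Q}_{2k_n})$, we have $\mathbbm{1}_{\overline{A}_{2k_n}}\circ F^{j}\in\sigma(\mathcal{Q}_{j+2k_n})$. Write $s_{n,i}:=\min I'_{n,i}$ and $e_{n,i}:=\max I'_{n,i}$. Then $Y_{n,i}$ is measurable with respect to
\[
\mathcal{E}_{n,i}:=F^{-s_{n,i}}\sigma(\mathcal{Q}_{|I'_{n,i}|+2k_n})\subseteq \sigma(\mathcal{Q}_{e_{n,i}+2k_n+1}).
\]
Hence every event $A\in\bigvee_{(m,l)<(n,i)}\mathcal{E}_{m,l}$ lies in $\sigma(\mathcal{Q}_{M})$ with $M=e_{\mathrm{prev}}+2k_{\mathrm{prev}}+1$, where "prev" is the immediately preceding index. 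An event $B\in\mathcal{E}_{n,i}$ has the form $F^{-s_{n,i}}\xi'$ with $\xi'\in\sigma(\bigcup_i\mathcal{Q}_i)$.

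\textbf{Step 2: the gap.} Between $I'_{\mathrm{prev}}$ and $I'_{n,i}$ lies the whole subblock $J_{\mathrm{prev}}$, of size $4\lfloor |I_{\mathrm{prev}}|^{c\varepsilon_1}\rfloor$. When passing from block $n-1$ to block $n$, the set $J_n$ also lies in between. So
\[
s_{n,i}-M\ge |J_{\mathrm{prev}}|-2k_{\mathrm{prev}}-1\succsim n^{ac\varepsilon_1},
\]
because $\varepsilon<c\varepsilon_1$ forces $k_n\approx n^{a\varepsilon}=o(n^{ac\varepsilon_1})$. The block sizes of consecutive $n$ are comparable, so the index shift in "prev" is harmless.

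\textbf{Step 3: the coupling.} The first inequality of Lemma \ref{youngmixing}, applied with $p=s_{n,i}>m=M$, gives
\[
|P(A\cap B)-P(A)P(B)|\le C(s_{n,i}-M)^{1-\xi}P(A)\precsim n^{-ac\varepsilon_1(\xi-1)}P(A)=:\phi_{n,i}P(A).
\]
The key point is that the constant is uniform in $A$, $B$, $n$ and $i$. Lemma \ref{usefulprob1} then yields independent $\overline{Y}_{n,i}=_dY_{n,i}$ on a richer space with $P\{|Y_{n,i}-\overline{Y}_{n,i}|\ge 6\phi_{n,i}\}\le 6\phi_{n,i}$.

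\textbf{Step 4: summation.} We have
\[
\sum_n\sum_{i\le c_n}\phi_{n,i}\approx\sum_n n^{a(1-c)-ac\varepsilon_1(\xi-1)},
\]
which is finite exactly when $a[c\varepsilon_1(\xi-1)+c-1]>1$. By Borel--Cantelli, almost surely only finitely many pairs $(n,i)$ have $|Y_{n,i}-\overline{Y}_{n,i}|\ge 6\phi_{n,i}$. Each such exceptional term is bounded by $2|I_{n,i}|$, so together they contribute an a.s. finite random constant. The remaining terms sum to at most $6\sum\phi_{n,i}<\infty$. Therefore $\sum_{n\le N}\sum_{i\le c_n}|\overline{Y}_{n,i}-Y_{n,i}|=O(1)\le a_N^{0.25}$ for all large $N$, almost surely. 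The side conditions $1-c<\varepsilon_1/(1+\varepsilon_1)<1/8$ and $a>9$ only ensure the block construction is consistent: $c_n\ge1$, $|J_{n,i}|<|I_{n,i}|$, and the gaps dominate $k_n$.

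\textbf{Main obstacle.} I expect Step 1 to be the delicate part. One has to check that the joined past $\sigma$-field really sits inside a single $\sigma(\mathcal{Q}_M)$, so that Lemma \ref{youngmixing} applies with the factor $P(A)$ required by Lemma \ref{usefulprob1}. One also has to check that the nonstationarity, with $k_n$ changing from block to block, never shrinks the gap below order $n^{ac\varepsilon_1}$, in particular at the transitions between consecutive blocks $I_n$ and $I_{n+1}$.
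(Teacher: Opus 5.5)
Your proposal is correct and follows the paper's route. The paper also uses the Berkes--Philipp coupling of Lemma \ref{usefulprob1}, with $\phi$-mixing coefficients of order $|I_n'|^{-c\varepsilon_1(\xi-1)}$ obtained from Lemma \ref{youngmixing} across the gaps created by the $J_{n,i}$, followed by Borel--Cantelli under $a[c\varepsilon_1(\xi-1)+c-1]>1$. The one difference is minor: you apply Borel--Cantelli pair by pair and get an a.s.\ bounded total, whereas the paper applies it block by block and settles for the cruder bound $\sum_{n\le N}|I_n|^{1-c}\precsim N^{a(1-c)+1}$, which is exactly where it uses $1-c<1/8$ and $a>9$ (to get $N^{a(1-c)+1}\le a_N^{0.25}$).
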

 
 \begin{proof}
Note that $ Y_{N,I}$ is a discrete-valued random variable, take one of its values $a_{N,I}$, $\{Y_{N,I} = a_{N,I}\}$ is a union of some elements in $ F^{-\sum_{k < N} |I_k| - \sum_{j \le I-1} |I_{N,j}| - 1} \bigvee_{j=0}^{|I_{N,I}'|-1} F^{-j} Q_{2k_N}$  where $Q_{2k_N} = \bigvee_{j=0}^{2k_N-1} F^{-j} Q_1$. Observe that the indices appearing in the sum for \(Y_{N,I}\) and those for \(Y_{N,I+1}\) are separated by a gap of at least \(0.5|J_{N,I}| = 2\lfloor |I_{N,I}|^{\varepsilon_1}\rfloor \approx 2|I_N'|^{c\varepsilon_1}\); similarly, the indices for \(Y_{N+1,1}\) and \(Y_{N,c_N}\) are separated by at least \(0.5|J_{N,c_N}| \approx 2|I_N'|^{c\varepsilon_1}\).

 If $I>1$, then $\bigcap_{(n,i)< (N,I)} \{ Y_{n,i} = a_{n,i} \}$ is a union of some elements in
 $$\bigvee_{n=1}^{N-1}  \bigvee_{i \le c_n} \bigvee_{j \in I_{n,i}'} F^{-j} Q_{2k_n} \bigvee \bigvee_{i \le I-1} \bigvee_{j \in I_{N,i}'} F^{-j} Q_{2k_N}.$$ If $I=1$, then $\bigcap_{(n,i)< (N,I)} \{ Y_{n,i} = a_{n,i} \}$ is a union of some elements in
 $\bigvee_{n=1}^{N-1} \bigvee_{i \le c_n} \bigvee_{j \in I_{n,i}'} F^{-j} Q_{2k_n}$. Let $D$ be such an element and take any value $a_{N,I}$ of $Y_{N,I}$. According to Lemma \ref{youngmixing}, there is a constant $C>0$ (depending on $F$ only) such that \[\left|\mu_\Delta(D \bigcap \{Y_{N,I} = a_{N,I}\}) - \mu_\Delta(D) \mu_\Delta(Y_{N,I} = a_{N,I})  \right|\le C\frac{1}{|I_N'|^{c\varepsilon_1(\xi-1)}}\mu_\Delta(D).\] 

By Lemma \ref{usefulprob1}, without changing its distribution we can redefine the sequence \(\{Y_{N,I}\}_{(N,I)}\) on a richer probability space together with a sequence \(\{\overline{Y}_{N,I}\}_{(N,I)}\) of independent random variables such that \(Y_{N,I}=_d \overline{Y}_{N,I}\) and $P\{|Y_{N,I}-\overline{Y}_{N,I}| \geq 6C\frac{1}{|I_N'|^{c\varepsilon_1(\xi-1)}}\} \leq 6C\frac{1}{|I_N'|^{c\varepsilon_1(\xi-1)}}$ for any $(N,I)$. Therefore, for any $N\ge 1$, \[P\{\sum_{I\le c_N}|Y_{N,I}-\overline{Y}_{N,I}| \geq \sum_{I\le c_N}6C\frac{1}{|I_N'|^{c\varepsilon_1(\xi-1)}}\} \leq \sum_{I\le c_N}6C\frac{1}{|I_N'|^{c\varepsilon_1(\xi-1)}}\precsim \frac{|I_N|^{1-c}}{|I_N|^{c\varepsilon_1(\xi-1)}}.\] 

When $a[c\varepsilon_1(\xi-1)+c-1] > 1, 1-c < \frac{\varepsilon_1}{1+\varepsilon_1} < \frac{1}{8}$, then by the Borel-Cantelli Lemma, 

\[\sum_{I\le c_N} |\overline{Y}_{N,I} - Y_{N,I}| \precsim \frac{|I_N|^{1-c}}{|I_N|^{c\varepsilon_1(\xi-1)}} \precsim |I_N|^{1-c} \text{ a.s.}\]
holds for any $N\ge 1$. Therefore, when $a>9$, $\sum_{n \le N} \sum_{i \le c_n} |\overline{Y}_{n,i} - Y_{n,i}| \le N^{a(1-c)+1} \le a_N^{0.25} \text{ a.s.}$
\end{proof}

\subsubsection*{Step \ref{gaussianapproximate}: approximate $\overline{Y}_{n,i}$ by Gaussian variables using Skorokhod embedding}
In addition to the partial order, we introduce successor and predecessor operations for $\{(n,i): n \in \mathbb{N}, 1\le i \le c_n\}$: \begin{align*}
    (n,i)+1:=\begin{cases}
 (n,i+1),      &\text{if } i<c_n\\
(n+1,1), & \text{if } i=c_n\\
\end{cases}, \quad  (n,i)-1:=\begin{cases}
 (n,i-1),      &\text{if } i>1\\
(n-1,c_{n-1}), &\text{if }  i=1\\
\end{cases}.
\end{align*} Let $\sigma_{(N,I)}^2:= \mathbbm{E}(\sum_{(n,i) \le (N,I)} \overline{Y}_{n,i})^2$, $R_{N,I} :=\sum_{(n,i) \ge (N,I)} \frac{\overline{Y}_{n,i}}{\sigma_{(n,i)}^2}$. Since $\{\overline{Y}_{n,i}\}_{(n,i)} $ are independent, \[\delta_{(N,I)}^2:=\mathbb{E}R_{N,I}^2= \sum_{(n,i) \ge (N,I)}\mathbb{E}\frac{\overline{Y}_{n,i}^2}{\sigma_{(n,i)}^4}= \sum_{(n,i) \ge (N,I)}\frac{\sigma_{(n,i)}^2-\sigma_{(n,i)-1}^2}{\sigma_{(n,i)}^4} \approx \sigma_{(N,I)}^{-2}.\] Hence $(R_{N,I})_{(N,I)}$ is a one-dimensional reverse martingale w.r.t. a decreasing filtration $(\mathcal{F}_{(N,I)})_{(N,I)}:=(\sigma\{\overline{Y}_{n,i}:(n,i)\ge (N,I)\})_{(N,I)}$. By the Skorokhod embedding \cite{skorokhod}, there is a Brownian motion $(B_t)_{t \ge 0}$ and a family of decreasing stopping times $\tau_{(N,I)}$ such that $\lim_{N \to \infty}\tau_{(N,I)}=0$ and  \[R_{N,I} = B_{\tau_{(N,I)}}, \quad \mathbb{E}\left(\tau_{(N,I)}-\tau_{(N,I)+1} \mid \mathcal{G}_{(N,I)+1} \right) = \mathbb{E} \frac{\overline{Y}_{N,I}^2}{\sigma_{(N,I)}^4} = \frac{\sigma_{(N,I)}^2 - \sigma_{(N,I)-1}^2}{\sigma_{(N,I)}^4},\]\[\overline{Y}_{N,I} = \sigma_{(N,I)}^2 \Big( B_{\tau_{(N,I)}} - B_{\tau_{(N,I)+1}}\Big),\]
\begin{align}\label{sk}
    \mathbb{E}\Big( |\tau_{(N,I)}-\tau_{(N,I)+1}|^{1+\frac{\xi-2}{2(\xi-1)}} \mid \mathcal{G}_{(N,I)+1} \Big) \le C \mathbb{E}\Big|\frac{\overline{Y}_{N,I}}{\sigma_{(N,I)}^2}\Big|^{2(1+\frac{\xi-2}{2(\xi-1)})}
\end{align} where the constant $C>0$ does not depend on $(N,I)$ and $\mathcal{G}_{(N,I)}:=\sigma\{\overline{Y}_{n,i}, \tau_{(n,i)}:(n,i)\ge (N,I)\}$. Now we can approximate $\overline{Y}_{n,i}$.
\begin{align*}
    &\sum_{(n,i) \le (N,I)} \overline{Y}_{n,i} = \sum_{(n,i) \le (N,I)} \sigma_{(n,i)}^2 (B_{\tau_{(n,i)}} - B_{\tau_{(n,i)+1}})\\
    &= \sum_{(n,i) \le (N,I)} \sigma_{(n,i)}^2 \left( B_{\tau_{(n,i)}} - B_{\delta_{(n,i)}^2} - (B_{\tau_{(n,i)+1}} - B_{\delta_{(n,i)+1}^2}) \right)+ \sum_{(n,i) \le (N,I)} \sigma_{(n,i)}^2 (B_{\delta_{(n,i)}^2} - B_{\delta_{(n,i)+1}^2})\\
    &= \sum_{(n,i) \le (N,I)} (\sigma_{(n,i)}^2 - \sigma_{(n,i)-1}^2) (B_{\tau_{(n,i)}} - B_{\delta_{(n,i)}^2}) -\sigma_{(N,I)}^2 (B_{\tau_{(N,I)+1}} - B_{\delta_{(N,I)+1}^2}) \\
    &\quad +\sum_{(n,i) \le (N,I)} \sigma_{(n,i)}^2 (B_{\delta_{(n,i)}^2} - B_{\delta_{(n,i)+1}^2}).
    \end{align*}

$\sigma_{(n,i)}^2 (B_{\delta_{(n,i)}^2} - B_{\delta_{(n,i)+1}^2})$ is the required Gaussian variable. Now we address the error terms $B_{\tau_{(n,i)}} - B_{\delta_{(n,i)}^2}$ by using the following ASIP criterion.
Fix a small $ \varepsilon_{00} \in (0, \frac{1}{8})$ (to be determined) and apply the fact that the Brownian motion is $(0.5-\epsilon_{00})$-H\"older continuous near time $0$. 
\begin{lemma}[ASIP criterion]\label{asipcriteria}
    If $\tau_{(n,i)} - \delta_{(n,i)}^2 = O( \sigma_{(n,i)}^{-2(1+4\varepsilon_{00})})$ a.s., then \[\sum_{(n,i) \le (N,I)} \overline{Y}_{n,i}=\sum_{(n,i) \le (N,I)} \sigma_{(n,i)}^2 (B_{\delta_{(n,i)}^2} - B_{\delta_{(n,i)+1}^2})+ O(\sigma_{(N,I)}^{2\left(\frac{1}{2} - (\varepsilon_{00} - 4\varepsilon_{00}^2)\right)}) \text{ a.s.}\]
\end{lemma}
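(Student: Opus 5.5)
We start from the decomposition displayed just before the lemma:
\[
\sum_{(n,i)\le (N,I)}\overline{Y}_{n,i}=\sum_{(n,i)\le(N,I)}\sigma_{(n,i)}^2\bigl(B_{\delta_{(n,i)}^2}-B_{\delta_{(n,i)+1}^2}\bigr)+E_1+E_2,
\]
where
\[
E_1:=\sum_{(n,i)\le (N,I)}(\sigma_{(n,i)}^2-\sigma_{(n,i)-1}^2)\bigl(B_{\tau_{(n,i)}}-B_{\delta_{(n,i)}^2}\bigr),\qquad E_2:=-\sigma_{(N,I)}^2\bigl(B_{\tau_{(N,I)+1}}-B_{\delta_{(N,I)+1}^2}\bigr).
\]
It therefore suffices to show that $E_1$ and $E_2$ are both $O(\sigma_{(N,I)}^{2(\frac12-(\varepsilon_{00}-4\varepsilon_{00}^2))})$ a.s.

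The main tool is the local Hölder continuity of Brownian motion at small times. By Lévy's modulus of continuity, a.s. there is a random constant $K$ such that $|B_s-B_t|\le K|s-t|^{\frac12-\varepsilon_{00}}$ for all $s,t\in[0,1]$. We combine this with two facts:
\begin{itemize}
\item $\tau_{(n,i)}\to0$ and $\delta_{(n,i)}^2\approx\sigma_{(n,i)}^{-2}\to0$, so all relevant times eventually lie in $[0,1]$;
\item by hypothesis, $|\tau_{(n,i)}-\delta_{(n,i)}^2|\le C\sigma_{(n,i)}^{-2(1+4\varepsilon_{00})}$ a.s.
\end{itemize}
Together these give
\[
|B_{\tau_{(n,i)}}-B_{\delta_{(n,i)}^2}|\precsim \sigma_{(n,i)}^{-2(1+4\varepsilon_{00})(\frac12-\varepsilon_{00})}=\sigma_{(n,i)}^{-2\theta},\qquad \theta:=(1+4\varepsilon_{00})(\tfrac12-\varepsilon_{00})=\tfrac12+\varepsilon_{00}-4\varepsilon_{00}^2 .
\]
The finitely many early indices where times exceed $1$ contribute only an a.s. finite constant.

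For $E_2$, apply the same bound at index $(N,I)+1$. This gives $|E_2|\precsim \sigma_{(N,I)}^2\sigma_{(N,I)+1}^{-2\theta}\le\sigma_{(N,I)}^{2(1-\theta)}$, since $\sigma^2$ is nondecreasing along the order. Here $1-\theta=\frac12-(\varepsilon_{00}-4\varepsilon_{00}^2)$, as required.

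For $E_1$, set $s_j:=\sigma_{(n,i)}^2$ along the total order. Then
\[
|E_1|\precsim \sum_j (s_j-s_{j-1})s_j^{-\theta}\le\int_0^{s_J}x^{-\theta}\,dx\precsim s_J^{1-\theta},
\]
using $\theta<1$ and the monotonicity of $x^{-\theta}$ for the comparison with the integral. This is again the claimed order.

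We expect the main obstacle to be justifying that $\sigma_{(n,i)}^2$ is increasing and tends to infinity, which is needed both for the comparison sum-to-integral and for $\delta^2_{(n,i)}\approx\sigma_{(n,i)}^{-2}$. This follows from the independence of the $\overline{Y}_{n,i}$, since $\sigma^2_{(n,i)}-\sigma^2_{(n,i)-1}=\mathbb{E}\overline{Y}_{n,i}^2\ge0$. Divergence is given by the linear growth of the block variances, i.e. by $\sigma^2>0$ together with Lemma \ref{variance}. Once this is in place, the remaining steps are routine, so we would state monotonicity explicitly and proceed as above.
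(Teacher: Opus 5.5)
Your proposal is correct and follows essentially the same route as the paper: the summation-by-parts decomposition, $(\frac12-\varepsilon_{00})$-H\"older continuity of Brownian motion combined with the hypothesis on $\tau_{(n,i)}-\delta^2_{(n,i)}$, and then summing $(\sigma_{(n,i)}^2-\sigma_{(n,i)-1}^2)\,\sigma_{(n,i)}^{-2\theta}$ with $\theta=\frac12+\varepsilon_{00}-4\varepsilon_{00}^2$. You make explicit the monotonicity of $\sigma^2_{(n,i)}$ and the sum--integral comparison, which the paper leaves implicit. Two small points: the growth $\sigma^2_{(n,i)}\approx n^{a+1}$ comes from Lemma \ref{blockvariance}, not Lemma \ref{variance}; and this criterion only needs monotonicity, since all the times lie in the a.s.\ bounded interval $[0,\tau_{(1,1)}\vee\delta^2_{(1,1)}]$.
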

\begin{proof}
Using the properties of Brownian motions, the error term becomes \begin{align*}
&\Big|\sum_{(n,i) \le (N,I)} (\sigma_{(n,i)}^2 - \sigma_{(n,i)-1}^2) (B_{\tau_{(n,i)}} - B_{\delta_{(n,i)}^2}) -\sigma_{(N,I)}^2 (B_{\tau_{(N,I)+1}} - B_{\delta_{(N,I)+1}^2})\Big| \\
    &= \sum_{(n,i) \le (N,I)} (\sigma_{(n,i)}^2 - \sigma_{(n,i)-1}^2) O( |\tau_{(n,i)} - \delta_{(n,i)}^2|^{\frac{1}{2} - \varepsilon_{00}})+\sigma_{(N,I)}^2 O(|\tau_{(N,I)+1}-\delta_{(N,I)+1}^2|^{\frac{1}{2}-\epsilon_{00}}).
\end{align*}

Applying the assumptions, we have \[\sum_{(n,i) \le (N,I)} (\sigma_{(n,i)}^2 - \sigma_{(n,i)-1}^2) O(|\tau_{(n,i)} - \delta_{(n,i)}^2|^{\frac{1}{2} - \varepsilon_{00}})\lesssim \sum_{(n,i) \le (N,I)} \frac{\sigma_{(n,i)}^2 - \sigma_{(n,i)-1}^2}{\sigma_{(n,i)}^{2(\frac{1}{2} - \varepsilon_{00})(1+4\varepsilon_{00})}} \lesssim \sigma_{(N,I)}^{2\left(\frac{1}{2} - (\varepsilon_{00} - 4\varepsilon_{00}^2)\right)},\] \[\sigma_{(N,I)}^2 O(|\tau_{(N,I)+1}-\delta_{(N,I)+1}^2|^{\frac{1}{2}-\varepsilon_{00}})\lesssim  \frac{\sigma_{(N,I)}^2}{\sigma_{(N,I)}^{2(\frac{1}{2} - \varepsilon_{00})(1+4\varepsilon_{00})}} \lesssim \sigma_{(N,I)}^{2\left(\frac{1}{2} - (\varepsilon_{00} - 4\varepsilon_{00}^2)\right)}\] which concludes the proof. 
\end{proof}

Before verifying the conditions of Lemma \ref{asipcriteria}, we need to know more about $\sigma_{(N,I)}^2$.
\begin{lemma}\label{blockvariance}
    When  $\varepsilon \xi < \frac{1}{2} , \varepsilon_1 < \frac{1}{4} , 1-c < \frac{1}{4}$ and $\varepsilon(\xi-1) + \max\{3-\xi, 0\} + 1 - c < 1$, $\sigma_{(n,i)}^2\approx n^{a+1}$ where the constant in ``$\approx$" is independent of $n,i$.
\end{lemma}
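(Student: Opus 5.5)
Since the $\overline{Y}_{n,i}$ are independent with $\overline{Y}_{n,i}=_d Y_{n,i}$, we have $\sigma_{(N,I)}^2=\sum_{(n,i)\le(N,I)}\mathbb{E}Y_{n,i}^2$. The plan is to compare this with $\mathbb{E}\big(\sum_{(n,i)\le(N,I)}Y_{n,i}\big)^2$, and then compare that with the variance $\mathbb{E}(S_{p}\circ\pi)^2\approx\sigma^2 p$, where $p\approx a_N\approx N^{a+1}$ (Lemma \ref{variancesigma}, using $\sigma^2>0$). The claim is a two-sided estimate, so we need an upper and a lower bound, uniformly in $i\le c_n$. Since $a_{N-1}\approx a_N$, the index $I$ only matters up to constants.

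\textbf{Step 1 (single subblock).} Apply Lemma \ref{momentbound} to $\Psi_{k_n}$ with $k=k_n\approx n^{a\varepsilon}$ over windows of length $|I_{n,i}'|\approx n^{ac}$. This gives $\mathbb{E}Y_{n,i}^2\precsim k_n^{\xi-1}|I_{n,i}'|$. We also need the matching lower bound $\mathbb{E}Y_{n,i}^2=\sigma_{k_n}^2|I_{n,i}'|+O(k_n^2|I_{n,i}'|^{1-\min\{\xi-2,1\}}\log n)$, which follows from Lemma \ref{variance}. By Lemma \ref{variance}, $\sigma_{k_n}^2=\sigma^2+O(k_n^{-\min\{\xi-2,\alpha-1\}})$, and $\sigma^2>0$. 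With $\varepsilon$ small relative to $c$, this yields $\mathbb{E}Y_{n,i}^2\approx |I_{n,i}'|\approx n^{ac}$ for $n$ large; finitely many small $n$ are absorbed into constants. Summing over $i\le c_n$ and $n\le N$ gives
\[
\sigma^2_{(N,I)}\approx\sum_{n\le N}c_n n^{ac}\approx\sum_{n\le N}n^a\approx N^{a+1}.
\]
If this direct route works, it already gives the lemma.

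\textbf{Step 2 (consistency with cross terms).} This is a safeguard; it shows the independent-sum variance matches the true one. Write
\[
\mathbb{E}\Big(\sum Y_{n,i}\Big)^2-\sum\mathbb{E}Y_{n,i}^2=2\sum_{(n,i)<(n',i')}\mathbb{E}Y_{n,i}Y_{n',i'}.
\]
For cross terms inside the same block $n$, use the covariance bound in Lemma \ref{momentbound}: $k_n^{\xi-1}|I_n|^{\max\{3-\xi,0\}}\log n$ for each $i$. Over the $c_n$ subblocks this is $n^{a[\varepsilon(\xi-1)+\max\{3-\xi,0\}+1-c]}\log n=o(n^a)$, exactly by the hypothesis $\varepsilon(\xi-1)+\max\{3-\xi,0\}+1-c<1$. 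For cross terms between different blocks, the observables $\mathbbm{1}_{\overline A_{2k_n}}$ differ. Here I would compare each with $\mathbbm{1}_{\pi^{-1}B}\circ F^{j}$ via Lemma \ref{approximation}, which costs $|I_n|k_n^{-\min\{\xi-2,\alpha-1\}}$ in $L^2$. Then use Lemma \ref{variancegrow} for $\psi$, which bounds the cross covariance by a sublinear power. The discarded $J$-pieces are controlled in $L^2$ by Lemma \ref{momentbound} with $k_n^{\xi-1}|J_{n,i}|$. The conditions $\varepsilon\xi<1/2$, $\varepsilon_1<1/4$ and $1-c<1/4$ make all of these $o(N^{a+1})$. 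Combining with $\mathbb{E}(S_p\circ\pi)^2=\sigma^2p+o(p)$ recovers $\sigma^2_{(N,I)}=\sigma^2a_N(1+o(1))$ when $I=c_N$, and monotonicity in $(N,I)$ handles intermediate $I$.

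\textbf{Main obstacle.} The main obstacle is the lower bound, which needs uniformity in $n$. The factor $k_n^{\xi-1}$ in Lemma \ref{momentbound} is large and useless for a lower bound. So the lower bound must come from $\sigma_{k_n}^2\to\sigma^2>0$ together with the error term $k_n^2|I_{n,i}'|^{-\min\{\xi-2,1\}}\log n$ in Lemma \ref{variance}. That error must be $o(1)$, i.e.\ $2a\varepsilon<ac\min\{\xi-2,1\}$. If that is not implied by the stated hypotheses, I would fall back on Step 2. Step 2 only needs sublinear errors, and those are precisely what the listed hypotheses guarantee.
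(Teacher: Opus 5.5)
Your Step 1 does not work under the stated hypotheses, as you suspected. The relative error coming from Lemma \ref{variance} is $k_n^2|I'_{n,i}|^{-\min\{\xi-2,1\}}\log n\approx n^{a(2\varepsilon-c\min\{\xi-2,1\})}\log n$. For example, $\xi=2.1$, $c=0.999$, $\varepsilon=0.08$, $\varepsilon_1=0.1$ satisfy all four hypotheses, yet $2\varepsilon>c(\xi-2)$. (Step 1 does work for the parameters fixed at the end of the section, but that is an extra assumption.) So the proof rests on Step 2, and there the between-block terms are a genuine gap.

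Write $E_n:=\sum_i Y_{n,i}-X_n$ with $X_n=\sum_{t\in I_n}\psi\circ F^t$. Your $L^2$ comparisons give $\|E_n\|_2\precsim|I_n|^{(1-\delta)/2}$ for some per-block power saving $\delta>0$, e.g.\ $\delta=\varepsilon\min\{\xi-2,\alpha-1\}$ from Lemma \ref{approximation}. In the between-block covariances, however, these errors meet the whole accumulated past through Cauchy--Schwarz: $\sum_{n'\le N}\|\sum_{n<n'}X_n\|_2\,\|E_{n'}\|_2\approx N^{a+\frac32-\frac{a\delta}{2}}$. The same $N^{1/2}$ loss appears when you compare $\mathbb{E}(\sum Y_{n,i})^2$ with $\mathbb{E}(S_{a_N}\circ\pi)^2$ by the triangle inequality over blocks. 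This is $o(N^{a+1})$ only if $a\delta>1$, e.g.\ $a\varepsilon\min\{\xi-2,\alpha-1\}>1$. No condition on $a$ is among the lemma's hypotheses, and $\varepsilon\xi<\frac12$, $\varepsilon_1<\frac14$, $1-c<\frac14$ only provide the per-block saving. So your claim that these conditions make ``all of these'' terms $o(N^{a+1})$ is not justified.

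The missing observation is that you never need between-block terms at all. Because the $\overline{Y}_{n,i}$ are independent over all $(n,i)$, you have $\sigma^2_{(N,I)}=\sum_{m<N}V_m+\sum_{i\le I}\mathbb{E}Y_{N,i}^2$ with $V_m:=\sum_{i\le c_m}\mathbb{E}Y_{m,i}^2$. It therefore suffices to show $V_m=\sigma^2|I_m|(1+o(1))$ for each single block:

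Your within-block estimate gives $V_m=\mathbb{E}(\sum_iY_{m,i})^2+O(|I_m|^{\varepsilon(\xi-1)+\max\{3-\xi,0\}+1-c}\log m)$.

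The last inequality of Lemma \ref{momentbound} restores the $J$-pieces at cost $k_m^{\xi-1}(|J_m|+c_m|J_{m,i}|)\approx|I_m|^{\varepsilon\xi}+|I_m|^{\varepsilon(\xi-1)+1-c+c\varepsilon_1}=o(|I_m|)$. This is exactly where $\varepsilon\xi<\frac12$, $\varepsilon_1<\frac14$, $1-c<\frac14$ are used.

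Lemma \ref{approximation} passes from $\mathbb{E}(X'_m)^2$ to $\mathbb{E}X_m^2$ at cost $O(|I_m|k_m^{-\min\{\xi-2,\alpha-1\}})$.

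Finally, $\mathbb{E}X_m^2=\sigma^2|I_m|(1+o(1))$ by $F$-invariance and Lemma \ref{variancesigma}.

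The sandwich $\sum_{m<N}V_m\le\sigma^2_{(N,I)}\le\sum_{m\le N}V_m$ then gives $\sigma^2_{(N,I)}\approx\sum_{m\le N}|I_m|\approx N^{a+1}$ with no condition on $a$. This is the paper's argument.
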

\begin{proof}
By Lemma \ref{approximation} and Lemma \ref{variancesigma}, $\mathbb{E} \left[ \frac{\sum_{t \in I_n} [\mathbbm{1}_{\pi^{-1}B} \circ F^t - \mu_\Delta(\pi^{-1}B)]}{|I_n|^{\frac{1}{2}}} \right]^2 \approx \sigma^2$ and

\begin{align*}
    & \Big|\frac{ \mathbb{E} (\sum_{t \in I_n} [ \mathbbm{1}_{\pi^{-1}B} \circ F^t - \mu_\Delta(\pi^{-1}B) ] )^2- \mathbb{E} (\sum_{t \in I_n} [ \mathbbm{1}_{\overline{A}_{2k_n}} \circ F^{t-k_n} - \mu_\Delta(\overline{A}_{2k_n}) ])^2 }{ |I_n| }\Big|\\
    &= \Big|\frac{ \mathbb{E} (\sum_{t \in I_n} [ \mathbbm{1}_{F^{-k_n}\pi^{-1}B} \circ F^{t-k_n} - \mu_\Delta(F^{-k_n}\pi^{-1}B) ] )^2- \mathbb{E} (\sum_{t \in I_n} [ \mathbbm{1}_{\overline{A}_{2k_n}} \circ F^{t-k_n} - \mu_\Delta(\overline{A}_{2k_n}) ])^2 }{ |I_n| }\Big|\\
    &\lesssim \frac{1}{k_n^{\min\{\alpha-1, \xi-2\}}} \lesssim \frac{1}{|I_n|^{\varepsilon \min\{\alpha-1, \xi-2\}}}
\end{align*}where the constants in ``$\approx, \precsim$" do not depend on $n$. These two estimates and $F$-invariance imply \[\frac{ \mathbb{E}\left( \sum_{t \in I_n} [ \mathbbm{1}_{\overline{A}_{2k_n}} \circ F^{t} - \mu_\Delta(\overline{A}_{2k_n}) ] \right)^2 }{ |I_n| }=\frac{ \mathbb{E}\left( \sum_{t \in I_n} [ \mathbbm{1}_{\overline{A}_{2k_n}} \circ F^{t-k_n} - \mu_\Delta(\overline{A}_{2k_n}) ] \right)^2 }{ |I_n| } \approx \sigma^2\]
where the constant in ``$\approx$" is independent of $n$. On the other hand, by Lemma \ref{momentbound}, 
\begin{align*}
    \mathbb{E}&\left( \sum_{j \in \bigcup_i I_{n,i}'} [ \mathbbm{1}_{\overline{A}_{2k_n}} \circ F^j - \mu_\Delta(\overline{A}_{2k_n}) ] \right)^2\\
    &= \mathbb{E}\left( \sum_{t \in I_n} [ \mathbbm{1}_{\overline{A}_{2k_n}} \circ F^{t} - \mu_\Delta(\overline{A}_{2k_n}) ] \right)^2+k_n^{\xi-1}O(|J_n|+c_n \cdot |I_n|^{c\varepsilon_1})\\
    &= \mathbb{E}\left( \sum_{t \in I_n} [ \mathbbm{1}_{\overline{A}_{2k_n}} \circ F^{t} - \mu_\Delta(\overline{A}_{2k_n}) ] \right)^2+ O(|I_n|^{\varepsilon(\xi-1)}[|I_n|^{\varepsilon}+|I_n|^{1-c}|I_n|^{c\varepsilon_1}]).
\end{align*}

When $\varepsilon \xi< \frac{1}{2} , \varepsilon_1 < \frac{1}{4} , 1-c < \frac{1}{4}$, \[\frac{\mathbb{E}\left(\sum_i Y_{n,i}\right)^2}{|I_n|}=\frac{ \mathbb{E}\left( \sum_{j \in \bigcup_i I_{n,i}'} [ \mathbbm{1}_{\overline{A}_{2k_n}} \circ F^j - \mu_\Delta(\overline{A}_{2k_n}) ] \right)^2 }{ |I_n| } \approx \sigma^2\] where the constant in ``$\approx$" is independent of $n$. Using the fact that $\overline{Y}_{n,i}$ are independent, $Y_{n,i}=_d\overline{Y}_{n,i} $, and Lemma \ref{momentbound}, we have 
\begin{align}
    \mathbb{E}\left( \sum_i \overline{Y}_{n,i} \right)^2 &= \sum_i \mathbb{E} \left( \sum_{j \in I_{n,i}'} [ \mathbbm{1}_{\overline{A}_{2k_n}} \circ F^j - \mu_\Delta(\overline{A}_{2k_n}) ] \right)^2\nonumber\\
    &= \mathbb{E}\left[ \sum_i Y_{n,i} \right]^2 - 2\sum_{i < j} \mathbb{E} Y_{n,i} Y_{n,j}\nonumber\\
    &= \mathbb{E} \left[ \sum_i Y_{n,i} \right]^2 - 2\sum_{i=1}^{c_n}\mathbb{E} [Y_{n,i}  \sum_{i < j \le c_n} Y_{n,j}]\nonumber\\
    &= \mathbb{E}\left[ \sum_i Y_{n,i} \right]^2 + \sum_{i=1}^{c_n} O(k_n^{(\xi-1)} |I_n|^{\max\{3-\xi, 0\}}\log n)\nonumber\\
    &= \mathbb{E}\left[ \sum_i Y_{n,i} \right]^2 + \sum_{i=1}^{c_n} O(|I_n|^{\varepsilon(\xi-1)} |I_n|^{\max\{3-\xi, 0\}}\log n).\label{6}
\end{align}

Hence, when $\varepsilon(\xi-1) + \max\{3-\xi, 0\} + 1 - c < 1$, we have
\[\frac{\mathbb{E}\left(\sum_i \overline{Y}_{n,i}\right)^2}{|I_n|} = \frac{\mathbb{E}\left(\sum_i Y_{n,i}\right)^2}{|I_n|} + \frac{O(|I_n|^{\varepsilon(\xi-1) + \max\{3-\xi, 0\} + 1 - c}\log n)}{|I_n|}\approx \sigma^2,\]
where the constant in ``$\approx$" is independent of $n$. Together with the fact that \[\sigma_{(n,i)}^2 \in \left[ \sum_{m \le n-1} \mathbb{E}\left(\sum_i \overline{Y}_{m,i}\right)^2, \sum_{m \le n} \mathbb{E}\left(\sum_i \overline{Y}_{m,i}\right)^2 \right],\] we obtain $\sigma_{(n,i)}^2 \approx \sum_{j \le n }|I_j| \approx n^{a+1} $ where ``$\approx$" is independent of $i, n$.
\end{proof} 

Now we can verify the conditions of Lemma \ref{asipcriteria}. Note that $\tau_{(N,I)} - \delta_{(N,I)}^2 = \sum_{(n,i) \ge (N,I)} \Delta \tau_{(n,i)} - \mathbb{E}[\Delta \tau_{(n,i)}|\mathcal{G}_{(n,i)+1}]$ is a reverse martingale where $\Delta \tau_{(n,i)}:=\tau_{(n,i)}-\tau_{(n,i)+1}$. Let $\varepsilon_0:=\frac{\xi-2}{2(\xi-1)}\in (0,1/2)$. 

\begin{lemma}\label{verifyasipcriteria}
    When $\frac{(3\xi-4)(\xi-1)}{\xi-2}\varepsilon < \frac{(1-c)\varepsilon_0}{4}, 8\varepsilon_{00} < \frac{(1-c)\varepsilon_0}{4} < \frac{\varepsilon_0}{2}, a\frac{(1-c)\varepsilon_0}{2} > 1$,  $\varepsilon \xi < \frac{1}{2} , \varepsilon_1 < \frac{1}{4} , 1-c < \frac{1}{4}$ and $\varepsilon(\xi-1) + \max\{3-\xi, 0\} + 1 - c < 1$, we have  $\sup_{(n,i) \ge (N,I)} |\tau_{(n,i)} - \delta_{(n,i)}^2| = O(\sigma_{(N,I)}^{-2(1+4\varepsilon_{00})})$ a.s.
\end{lemma}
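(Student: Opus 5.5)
Since $\tau_{(N,I)}-\delta^2_{(N,I)}=\sum_{(n,i)\ge(N,I)} D_{(n,i)}$ with $D_{(n,i)}:=\Delta\tau_{(n,i)}-\mathbb{E}[\Delta\tau_{(n,i)}\mid\mathcal{G}_{(n,i)+1}]$ is a reverse martingale, the plan is to control it through a $p$-th moment maximal inequality with $p:=1+\varepsilon_0$, then convert that into an almost sure bound by Borel--Cantelli along the block index $N$. Concretely, I would apply Burkholder's inequality (equivalently, von Bahr--Esseen for $1<p\le 2$) to the reverse martingale, together with Doob's maximal inequality:
\[
\mathbb{E}\sup_{(n,i)\ge(N,I)}\big|\tau_{(n,i)}-\delta^2_{(n,i)}\big|^{p}\precsim \sum_{(n,i)\ge(N,I)}\mathbb{E}|D_{(n,i)}|^{p}\precsim \sum_{(n,i)\ge(N,I)}\mathbb{E}|\Delta\tau_{(n,i)}|^{p}.
\]
By (\ref{sk}) each term is bounded by $\sigma_{(n,i)}^{-4p}\,\mathbb{E}|\overline{Y}_{n,i}|^{2p}$. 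Now $2p=2+\frac{\xi-2}{\xi-1}=\frac{3\xi-4}{\xi-1}$, which is exactly the exponent in Lemma \ref{momentbound}. Since $\overline{Y}_{n,i}=_d Y_{n,i}$, that lemma gives
\[
\mathbb{E}|\overline{Y}_{n,i}|^{2p}\precsim k_n^{\frac{(3\xi-4)(\xi-1)}{\xi-2}}|I_{n,i}|^{p}\precsim n^{a\varepsilon\frac{(3\xi-4)(\xi-1)}{\xi-2}}\,n^{acp}.
\]
Lemma \ref{blockvariance} gives $\sigma^2_{(n,i)}\approx n^{a+1}$ uniformly in $i$, and there are $c_n\approx n^{a(1-c)}$ subblocks in block $n$. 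Summing over $(n,i)\ge(N,I)$ yields
\[
\mathbb{E}\sup_{(n,i)\ge(N,I)}\big|\tau_{(n,i)}-\delta^2_{(n,i)}\big|^{p}\precsim \sum_{n\ge N} n^{a(1-c)+a\varepsilon\frac{(3\xi-4)(\xi-1)}{\xi-2}+acp-2p(a+1)}.
\]

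Next I compare this with the target rate. Multiply by $\sigma_{(N,I)}^{2p(1+4\varepsilon_{00})}\approx N^{(a+1)p(1+4\varepsilon_{00})}$. The exponent gap per block is roughly $-(a+1)p+a(1-c)+acp-\ldots$; writing $acp=ap-a(1-c)p$, the main terms combine to
\[
-p-a(1-c)\varepsilon_0+a\varepsilon\tfrac{(3\xi-4)(\xi-1)}{\xi-2}+4\varepsilon_{00}(a+1)p.
\]
The hypotheses make this negative with room to spare. The condition $\frac{(3\xi-4)(\xi-1)}{\xi-2}\varepsilon<\frac{(1-c)\varepsilon_0}{4}$ absorbs the $k_n$ loss, and $8\varepsilon_{00}<\frac{(1-c)\varepsilon_0}{4}$ absorbs the extra $4\varepsilon_{00}$ factor. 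The net effect is that the normalized $p$-th moment at level $N$ is $\precsim N^{-a(1-c)\varepsilon_0/2}$, or at least $N^{-1-\delta}$ since $a(1-c)\varepsilon_0/2>1$.

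I would then set $Z_N:=\sigma_{(N,1)}^{2(1+4\varepsilon_{00})}\sup_{(n,i)\ge(N,1)}|\tau_{(n,i)}-\delta^2_{(n,i)}|$. Markov's inequality gives $P(Z_N>1)\precsim N^{-a(1-c)\varepsilon_0/2}$, which is summable because $a(1-c)\varepsilon_0/2>1$. Borel--Cantelli then gives $Z_N\le 1$ eventually. Passing from $(N,1)$ to general $(N,I)$ uses only $\sigma_{(N,I)}\approx\sigma_{(N,1)}$ from Lemma \ref{blockvariance}, so the bound is uniform in $I$.

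The main obstacle I expect is the exponent bookkeeping in the summation. One has to check that the $c_n$ copies of the moment bound, each inflated by $k_n^{(3\xi-4)(\xi-1)/(\xi-2)}$, are beaten by the gain $|I_{n,i}|^p/\sigma^{4p}$. The gain comes precisely from the subblocks being much shorter than the cumulative variance, that is, from the factor $n^{-a(1-c)\varepsilon_0}$. I would first verify this with all $\varepsilon$-type parameters set to zero, and then confirm that each stated inequality consumes at most a quarter of that margin.
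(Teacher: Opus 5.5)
Your proposal is correct and follows essentially the same route as the paper. Both arguments apply Burkholder--Davis--Gundy to the reverse martingale with exponent $1+\varepsilon_0$ and reduce to $\sum\mathbb{E}|\Delta\tau_{(n,i)}|^{1+\varepsilon_0}$; bound each term via (\ref{sk}), Lemma \ref{momentbound} (whose exponent $\frac{3\xi-4}{\xi-1}$ is exactly $2(1+\varepsilon_0)$) and $\sigma^2_{(n,i)}\approx n^{a+1}$, giving a normalized moment $\precsim N^{-a(1-c)\varepsilon_0/2}$; and finish with Borel--Cantelli along $(N,1)$ and the passage to $(N,I)$ via $\sigma_{(N,I)}\approx\sigma_{(N,1)}$. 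Note that your ``per block'' exponent $-p-\dots$ becomes $-\varepsilon_0-\dots$ once you add the $+1$ from summing over $n\ge N$, which is the form your final bound actually uses.
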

\begin{proof}
 By the Burkholder–Davis–Gundy inequalities,
\begin{align*}
   & \mathbb{E} \sup_{(n, i) \ge (N,I)} \left| \tau_{(n,i)} - \delta_{(n,i)}^2 \right|^{1+\varepsilon_0} \\
    &= \mathbb{E} \sup_{(n, i) \ge (N,I)} \left| \sum_{(n_1,i_1) \ge (n, i)} \Delta \tau_{(n_1,i_1)} - \mathbb{E}[\Delta \tau_{(n_1,i_1)} \mid \mathcal{G}_{(n_1,i_1)+1}] \right|^{1+\varepsilon_0}\\
    &\approx \mathbb{E} \left( \sqrt{ \sum_{(n,i) \ge (N,I)} \left[ \Delta \tau_{(n,i)} - \mathbb{E}[\Delta \tau_{(n,i)}|\mathcal{G}_{(n,i)+1}] \right]^2 } \right)^{1+\varepsilon_0}\\
    &\lesssim \sum_{(n,i) \ge (N,I)} \mathbb{E} \left| \Delta \tau_{(n,i)} - \mathbb{E}[\Delta \tau_{(n,i)}|\mathcal{G}_{(n,i)+1}]  \right|^{1+\varepsilon_0} \lesssim \sum_{(n,i) \ge (N,I)} \mathbb{E} \left|\Delta \tau_{(n,i)}\right|^{1+\varepsilon_0}
\end{align*}

By (\ref{sk}), Lemma \ref{blockvariance} and Lemma \ref{momentbound}, we can continue to estimate
\begin{align*}
\lesssim \sum_{(n,i) \ge (N,I)} \mathbb{E} \left| \frac{\overline{Y}_{n,i}^2}{\sigma_{(n,i)}^4} \right|^{(1+\varepsilon_0)} &\lesssim \sum_{(n,i) \ge (N,I)} \mathbb{E} \left| \frac{Y_{n,i}^2}{\sigma_{(n,i)}^4} \right|^{(1+\varepsilon_0)}\\
&\lesssim \sum_{(n,i) \ge (N,I)} \frac{|I_{n,i}|^{1+\varepsilon_0} k_n^{\frac{(3\xi-4)(\xi-1)}{\xi-2}}}{\sigma_{(n,i)}^{2(2+2\varepsilon_0)}}\\
&\lesssim \sum_{(n,i) \ge (N,I)} \frac{n^{ac(1+\varepsilon_0)} n^{a\varepsilon\frac{(3\xi-4)(\xi-1)}{\xi-2}}}{n^{(a+1)(2+2\varepsilon_0)}}\\
&\lesssim \sum_{n \ge N} \frac{n^{a(1-c) + ac(1+\varepsilon_0) + \varepsilon a\frac{(3\xi-4)(\xi-1)}{\xi-2}}}{n^{(a+1)(2+2\varepsilon_0)}}\\
&\precsim \sum_{n \ge N} \frac{n^{a(1+c\varepsilon_0+\varepsilon\frac{(3\xi-4)(\xi-1)}{\xi-2})}}{n^{a(2+2\varepsilon_0)} n^{2+2\varepsilon_0}} \precsim \frac{1}{N^{a(1+(2-c)\varepsilon_0-\varepsilon\frac{(3\xi-4)(\xi-1)}{\xi-2})+1+2\varepsilon_0}}
\end{align*}

Choose $\frac{(3\xi-4)(\xi-1)}{\xi-2}\varepsilon < \frac{(1-c)\varepsilon_0}{4}, 8\varepsilon_{00} < \frac{(1-c)\varepsilon_0}{4} < \frac{\varepsilon_0}{2}, a\frac{(1-c)\varepsilon_0}{2} > 1$, then $a(1+(2-c)\varepsilon_0 - \varepsilon\frac{(3\xi-4)(\xi-1)}{\xi-2} - 1 - \varepsilon_0 - 4\varepsilon_{00} - 4\varepsilon_{00}\varepsilon_0) + 1 + 2\varepsilon_0 - 1 - \varepsilon_0 - 4\varepsilon_{00} - 4\varepsilon_0\varepsilon_{00}= a[(1-c)\varepsilon_0 - \varepsilon\frac{(3\xi-4)(\xi-1)}{\xi-2} - 4\varepsilon_{00} - 4\varepsilon_{00}\varepsilon_0] + \varepsilon_0 - 4\varepsilon_{00} - 4\varepsilon_0\varepsilon_{00}> a\frac{(1-c)\varepsilon_0}{2}>1$.
Therefore,
\begin{align*}
  \mathbb{E}\frac{\sup_{(n, i) \ge (N, 1)} |\tau_{(n,i)} - \delta_{(n,i)}^2|^{1+\varepsilon_0}}{\sigma_{(N, 1)}^{-2(1+4\varepsilon_{00})(1+\varepsilon_0)}} \lesssim \frac{N^{(a+1)(1+4\varepsilon_{00})(1+\varepsilon_0)}}{N^{a(1+(2-c)\varepsilon_0-\varepsilon\frac{(3\xi-4)(\xi-1)}{\xi-2})+1+2\varepsilon_0}} \lesssim \frac{1}{N^{a\frac{(1-c)\varepsilon_0}{2}}}.
\end{align*}

By the Borel-Cantelli lemma, $\sup_{(n,i) \ge (N,I)} |\tau_{(n,i)} - \delta_{(n,i)}^2|\le \sup_{(n,i) \ge (N,1)} |\tau_{(n,i)} - \delta_{(n,i)}^2| = O(\sigma_{(N,1)}^{-2(1+4\varepsilon_{00})})=O(\sigma_{(N,I)}^{-2(1+4\varepsilon_{00})})$ a.s.
\end{proof}

Thus Lemma \ref{asipcriteria} gives independent Gaussian variables $G_{n,i}:=\sigma_{(n,i)}^2 (B_{\delta_{(n,i)}^2} - B_{\delta_{(n,i)+1}^2})$ such that $\mathbb{E}G_{n,i}^2 = \mathbb{E}\overline{Y}_{n,i}^2 = \mathbb{E}Y_{n,i}^2$. Combining all lemmas in this step and Lemma \ref{approximateindependnt}, we obtain
\begin{lemma}\label{asip2}
   Under the conditions of Lemma \ref{blockvariance}, Lemma \ref{approximateindependnt} and Lemma \ref{verifyasipcriteria}, $$|\sum_{(n,i) \le (N,I)} \overline{Y}_{n,i} - G_{n,i}| \lesssim \sigma_{N,I}^{2\left(\frac{1}{2} - (\varepsilon_{00} - 4\varepsilon_{00}^2)\right)} \lesssim (N^{a+1})^{\left(\frac{1}{2} - (\varepsilon_{00} - 4\varepsilon_{00}^2)\right)} \text{ a.s.}, $$
   $$|\sum_{(n,i) \le (N,I)} Y_{n,i} - G_{n,i}|  \lesssim (N^{a+1})^{\left(\frac{1}{2} - (\varepsilon_{00} - 4\varepsilon_{00}^2)\right)}+(N^{a+1})^{0.25} \text{ a.s.} $$
\end{lemma}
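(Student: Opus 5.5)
The plan is to chain three results: Lemma \ref{asipcriteria}, Lemma \ref{verifyasipcriteria} and Lemma \ref{approximateindependnt}, with Lemma \ref{blockvariance} converting variances into powers of $N$.

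\textbf{First estimate.} Under the conditions of Lemma \ref{blockvariance} and Lemma \ref{verifyasipcriteria}, that lemma gives
\[
\tau_{(N,I)}-\delta^2_{(N,I)}\le \sup_{(n,i)\ge (N,I)}|\tau_{(n,i)}-\delta^2_{(n,i)}| = O\big(\sigma_{(N,I)}^{-2(1+4\varepsilon_{00})}\big) \text{ a.s.}
\]
This is exactly the hypothesis of Lemma \ref{asipcriteria}. That lemma then yields
\[
\Big|\sum_{(n,i)\le(N,I)}\overline{Y}_{n,i}-G_{n,i}\Big|\precsim \sigma_{(N,I)}^{2(\frac12-(\varepsilon_{00}-4\varepsilon_{00}^2))} \text{ a.s.}
\]
To pass to powers of $N$, I use Lemma \ref{blockvariance}: $\sigma^2_{(N,I)}\approx N^{a+1}$ uniformly in $I$. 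Since the exponent $\frac12-(\varepsilon_{00}-4\varepsilon_{00}^2)$ is positive, the bound becomes $\precsim (N^{a+1})^{\frac12-(\varepsilon_{00}-4\varepsilon_{00}^2)}$.

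\textbf{Second estimate.} I write
\[
\sum_{(n,i)\le (N,I)}(Y_{n,i}-G_{n,i})=\sum_{(n,i)\le (N,I)}(Y_{n,i}-\overline{Y}_{n,i})+\sum_{(n,i)\le (N,I)}(\overline{Y}_{n,i}-G_{n,i}).
\]
The second sum is controlled by the first estimate. For the first sum, I bound its absolute value by $\sum_{n\le N}\sum_{i\le c_n}|\overline{Y}_{n,i}-Y_{n,i}|$. Lemma \ref{approximateindependnt} bounds this by $a_N^{0.25}$ a.s., and $a_N\approx N^{a+1}$, which gives the extra term $(N^{a+1})^{0.25}$.

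\textbf{Main care point.} All variables must live on one common probability space. I take the richer space from Lemma \ref{approximateindependnt}, which carries $Y$ and $\overline{Y}$. On top of it I build the Skorokhod embedding for the independent sequence $\overline{Y}$, enlarging the space once more if necessary. Since the joint law of $(Y,\overline{Y})$ is unchanged by this enlargement, the almost sure bound of Lemma \ref{approximateindependnt} survives, and the two almost sure bounds can be combined. Beyond this, the argument is just bookkeeping of the parameter constraints, namely the union of the hypotheses of the three lemmas.
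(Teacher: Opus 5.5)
Your proposal is correct and follows the paper's argument. The paper gets the lemma in the same way: it feeds Lemma \ref{verifyasipcriteria} into Lemma \ref{asipcriteria}, uses Lemma \ref{blockvariance} to convert $\sigma^2_{(N,I)}\approx N^{a+1}$, and adds the $a_N^{0.25}\approx (N^{a+1})^{0.25}$ error from Lemma \ref{approximateindependnt} by the triangle inequality. Your remark about building the Skorokhod embedding on an enlargement of the space from Lemma \ref{approximateindependnt} is a detail the paper leaves implicit, and you handle it correctly; it is what lets the two almost sure bounds be combined on one space.
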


\subsubsection*{Step \ref{mx}: a double maximal inequality}

Since the Gaussian approximation in Lemma \ref{asip2} is obtained for blocks $I_{n,i}'$, in order to pass it to any time, we need the following new maximal inequality. 

\begin{lemma}\label{maxinequality}
   When $a[\frac{0.5(\xi-2)}{2\xi-2}(1-c)-0.5\varepsilon {\frac{(3\xi-4)(\xi-1)}{\xi-2}}] > 1,  \varepsilon < \frac{(1-c)(\xi-2)^2}{2(\xi-1)^2(3\xi-4)}$, then $$\sup_I\sup_u \left| \sum_{j \in I_{N, I}' \cap [0, u]} \left[ \mathbbm{1}_{\overline{A}_{2k_{N}}} \circ F^j - \mu_\Delta(\overline{A}_{2k_{N}}) \right] \right|\lesssim N^{a\left( \frac{1}{2} - [\frac{0.5(\xi-2)(1-c)}{6\xi-8}-\frac{\varepsilon(\xi-1)^2}{2\xi-4}] \right)} \text{ a.s.}$$ holds for any $N\ge 1$.
 \end{lemma}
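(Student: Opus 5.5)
Fix $N$ and write $Z_j:=\mathbbm{1}_{\overline{A}_{2k_N}}\circ F^j-\mu_\Delta(\overline{A}_{2k_N})$. This is a stationary process under $\mu_\Delta$, and it satisfies the moment bounds of Lemma \ref{momentbound} with $k=k_N\approx N^{a\varepsilon}$. The plan is to bound each subblock maximum in a high moment, take a union bound over the $c_N$ subblocks, and finish with Borel--Cantelli in $N$.

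\textbf{Step 1 (one subblock).} Set $r:=\frac{3\xi-4}{\xi-1}=2+\frac{\xi-2}{\xi-1}$, so $r>2$. The last moment inequality in Lemma \ref{momentbound} gives, uniformly in the starting point,
\[
\sup_m\mathbb{E}\Big|\sum_{m\le j\le m+n}Z_j\Big|^{r}\precsim k_N^{\frac{(3\xi-4)(\xi-1)}{\xi-2}}\,n^{r/2}.
\]
Apply Lemma \ref{usefulprob} (the $(2+\varepsilon)$-moment version, with $\varepsilon=\frac{\xi-2}{\xi-1}$) to the rescaled variables $Z_j/k_N^{\frac{(\xi-1)^2}{\xi-2}}$. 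Their $r$-th moment bound then has no $k_N$ factor, and they stay uniformly bounded by $1$ because $k_N\ge1$. Undoing the rescaling yields
\[
\mathbb{E}\max_{u}\Big|\sum_{j\in I'_{N,I}\cap[0,u]}Z_j\Big|^{r}\precsim k_N^{\frac{(3\xi-4)(\xi-1)}{\xi-2}}\,|I_{N,I}'|^{r/2}\precsim N^{a\varepsilon\frac{(3\xi-4)(\xi-1)}{\xi-2}}\,N^{acr/2}.
\]
The constants do not depend on $N$ or $I$.

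\textbf{Step 2 (union over subblocks).} Let $\lambda_N:=N^{a(\frac12-\theta)}$, where $\theta=\frac{0.5(\xi-2)(1-c)}{6\xi-8}-\frac{\varepsilon(\xi-1)^2}{2\xi-4}$ is the exponent in the statement. Markov's inequality and a union bound over the $c_N\approx N^{a(1-c)}$ subblocks give
\[
\mu_\Delta\Big(\sup_I\sup_u|\cdot|>\lambda_N\Big)\precsim N^{a(1-c)}\,N^{a\varepsilon\frac{(3\xi-4)(\xi-1)}{\xi-2}}\,N^{acr/2}\,N^{-ar(\frac12-\theta)}.
\]
The $N^{acr/2}$ factor combined with $N^{-ar/2}$ leaves $N^{-a(1-c)r/2}$. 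Together with the factor $N^{a(1-c)}$ this gives $N^{-a(1-c)(r/2-1)}=N^{-a(1-c)\frac{\xi-2}{2(\xi-1)}}$. Multiplying $\theta$ by $r$ produces exactly $\frac{0.5(\xi-2)(1-c)}{2\xi-2}-\frac{\varepsilon(\xi-1)(3\xi-4)}{2(\xi-2)}$. The total exponent of $N$ is therefore
\[
-a\Big[\frac{0.5(\xi-2)(1-c)}{2\xi-2}-0.5\varepsilon\frac{(3\xi-4)(\xi-1)}{\xi-2}\Big]+O(\text{slack}).
\]
Part of the $N^{-a(1-c)\frac{\xi-2}{2(\xi-1)}}$ gain is kept in reserve to absorb the $k_N$ loss. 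This matches the first hypothesis, which forces the total exponent below $-1$. The second hypothesis, $\varepsilon<\frac{(1-c)(\xi-2)^2}{2(\xi-1)^2(3\xi-4)}$, guarantees $\theta>0$.

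\textbf{Step 3 (Borel--Cantelli).} The probabilities in Step 2 are summable in $N$, so Borel--Cantelli gives the bound almost surely for all large $N$. Enlarging the implicit constant covers the finitely many remaining $N$.

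The main obstacle is bookkeeping rather than ideas. Step 1 requires a maximal inequality that holds uniformly in $N$ even though the process changes with $N$; the rescaling by a power of $k_N$ is what makes Lemma \ref{usefulprob} applicable. Step 2 requires tracking exponents precisely enough that the stated $\theta$ balances the $k_N$ loss against the subblock count. If the arithmetic in Step 2 falls short by a small margin, I would shrink $\theta$ slightly, which remains consistent with the $\lesssim$ in the statement.
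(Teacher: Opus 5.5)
Your proposal is correct and is essentially the paper's proof: the $r=\frac{3\xi-4}{\xi-1}$ moment bound from Lemma \ref{momentbound}, the maximal upgrade via Lemma \ref{usefulprob}, a union bound over the $c_N$ subblocks, then Markov and Borel--Cantelli, with the same exponent bookkeeping. The exponent comes out to exactly $-a\bigl[\frac{0.5(\xi-2)(1-c)}{2\xi-2}-0.5\varepsilon\frac{(3\xi-4)(\xi-1)}{\xi-2}\bigr]$, so you need neither slack nor your fallback of shrinking $\theta$ (which would in any case prove only a weaker bound); your rescaling by $k_N^{(\xi-1)^2/(\xi-2)}$ is a nice touch that makes explicit the uniformity in $N$ of the constant in Lemma \ref{usefulprob}, which the paper leaves implicit.
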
 
\begin{proof}
Fixing $\overline{A}_{2k_{N}}$ and applying Lemma \ref{momentbound} to the stationary process $\mathbbm{1}_{\overline{A}_{2k_{N}}} \circ F^t - \mu_\Delta(\overline{A}_{2k_{N}})$, we obtain $\sup_u\mathbb{E}  \Big| \sum_{t \in [u,u+v] } \left[ \mathbbm{1}_{\overline{A}_{2k_{N}}} \circ F^t - \mu_\Delta(\overline{A}_{2k_{N}}) \right] \Big|^{\frac{(3\xi-4)}{\xi-1}} \lesssim k_N^{\frac{(3\xi-4)(\xi-1)}{\xi-2}}  v^{^{\frac{(3\xi-4)}{2\xi-2}} }$ where the constant in ``$\precsim$" does not depend on any $N,v$. Applying Lemma \ref{usefulprob} to the bound above and taking $v= |I'_{N,I}|$, we obtain
\begin{align*}
&\mathbb{E} \left[\sup_u\left| \sum_{j \in I_{N, I}'\bigcap [0,u] } \left[ \mathbbm{1}_{\overline{A}_{2k_{N}}} \circ F^j - \mu_\Delta(\overline{A}_{2k_{N}}) \right] \right|^{\frac{(3\xi-4)}{\xi-1}} \right] \lesssim  k_N^{^{\frac{(3\xi-4)(\xi-1)}{\xi-2}} } |I_{N, I}'|^{\frac{(3\xi-4)}{2\xi-2}}.
\end{align*}

By the $F$-invariance, we have
\begin{align*}
&\mathbb{E} \left[\sup_I\sup_u\left| \sum_{j \in I_{N, I}'\bigcap [0,u] } \left[ \mathbbm{1}_{\overline{A}_{2k_{N}}} \circ F^j - \mu_\Delta(\overline{A}_{2k_{N}}) \right] \right|^{\frac{(3\xi-4)}{\xi-1}} \right] \\
&\le \sum_I\mathbb{E} \left[\sup_u\left| \sum_{j \in I_{N, I}'\bigcap [0,u] } \left[ \mathbbm{1}_{\overline{A}_{2k_{N}}} \circ F^j - \mu_\Delta(\overline{A}_{2k_{N}}) \right] \right|^{\frac{(3\xi-4)}{\xi-1}} \right] \lesssim  c_N k_N^{^{\frac{(3\xi-4)(\xi-1)}{\xi-2}} } |I_{N, I}'|^{\frac{(3\xi-4)}{2\xi-2}}.
\end{align*}

which implies \begin{align*}
    &\mathbb{E}\left[\frac{ \sup_I\sup_u \left| \sum_{j \in I_{N, I}' \cap [0, u]} \left[ \mathbbm{1}_{\overline{A}_{2k_{N}}} \circ F^j - \mu_\Delta(\overline{A}_{2k_{N}}) \right] \right|^{\frac{(3\xi-4)}{\xi-1}}}{N^{a\left( \frac{1}{2} - \frac{w}{2} \right)\frac{(3\xi-4)}{\xi-1}}}\right]\\
    &\lesssim \frac{  c_N k_N^{^{\frac{(3\xi-4)(\xi-1)}{\xi-2}} } |I_{N, I}'|^{\frac{(3\xi-4)}{2\xi-2}}}{ N^{a(1+\frac{\xi-2}{2\xi-2}-w\frac{(3\xi-4)}{2\xi-2})} }   \lesssim \frac{1}{ N^{a[\frac{\xi-2}{2\xi-2}(1-c)-w\frac{(3\xi-4)}{2\xi-2}-\varepsilon {\frac{(3\xi-4)(\xi-1)}{\xi-2}}]}}\lesssim \frac{1}{ N^{a[\frac{0.5(\xi-2)}{2\xi-2}(1-c)-0.5\varepsilon {\frac{(3\xi-4)(\xi-1)}{\xi-2}}]}}
\end{align*}where $w:=\frac{0.5(\xi-2)(1-c)}{3\xi-4}-\frac{\varepsilon(\xi-1)^2}{\xi-2}>0$. By the Borel-Cantelli lemma, we obtain \[\sup_I\sup_u \left| \sum_{j \in I_{N, I}' \cap [0, u]} \left[ \mathbbm{1}_{\overline{A}_{2k_{N}}} \circ F^j - \mu_\Delta(\overline{A}_{2k_{N}}) \right] \right|\lesssim N^{a\left( \frac{1}{2} - \frac{\frac{0.5(\xi-2)(1-c)}{3\xi-4}-\frac{\varepsilon(\xi-1)^2}{\xi-2}}{2} \right)} \text{ a.s.}\]
\end{proof}
\begin{remark}\label{cannotmax}
    Since the regularity of $\mathbbm{1}_{\overline{A}_{2k_{N}}}-\mu_\Delta(\overline{A}_{2k_{N}})$ would blow up as $N$ increases,  we cannot derive a moment estimate for $\sup_u|\sum_{i\in I_{N} \bigcap [0,u]}[\mathbbm{1}_{\overline{A}_{2k_{N}}}\circ F^i-\mu_\Delta(\overline{A}_{2k_{N}})]|$. This is why  $I_{N}$ is partitioned further and a maximal inequality is derived for the smaller block $I'_{N,I}$ uniformly: $\sup_I\sup_u|\sum_{i\in I'_{N,I}\bigcap [0,u]}[\mathbbm{1}_{\overline{A}_{2k_{N}}}\circ F^i-\mu_\Delta(\overline{A}_{2k_{N}})]|$. In the existing ASIP literature for regular observables (e.g., \cite{MN2}), $\mathbbm{1}_{\overline{A}_{2k_{N}}}$ would not appear and the maximal inequality in that setting was simpler.
\end{remark}

\subsubsection*{Step \ref{asipontower}: ASIP for $S_m\circ \pi$ and $S_m$}
Throughout this step, we suppose that $m \in [a_N, a_{N+1})$ for some $N$ and $I_{N+1,I}$ is the rightmost complete subblock contained in $ [0,m)$. For convenience, we set $\sum_{j \in I_{n, c_n+1}' }(\cdot):=0$ because $I_{n, i}'$ is defined only for $i\le c_n $. Also set $I=1$ if $m\in I_{N+1,1}$.  Now we can approximate $\sum_{r \le m} \left[ \mathbbm{1}_{\pi^{-1}B} \circ F^r - \mu_\Delta(\pi^{-1}B) \right]$ by $\sum_{(n,i) \le (N+1, I)} G_{n,i}$. 
\begin{lemma}
    Under the conditions of Lemma \ref{maxinequality}, Lemma \ref{asip1}, Lemma \ref{asip2}, and Lemma \ref{ruleoutsmall}, \begin{align*}
        &\sum_{r \le m} \left[ \mathbbm{1}_{\pi^{-1}B} \circ F^r - \mu_\Delta(\pi^{-1}B) \right]=\sum_{(n,i) \le (N+1, I)} G_{n,i} + O\left( m^{\frac{1}{2} - (\varepsilon_{00} - 4\varepsilon_{00}^2)} \right)\\
        &+O\left( m^{0.3} \right)+O\left( m^{0.25} \right) + O\left( m^{0.5-[\varepsilon \min\{\xi-2,\alpha-1\}/4 - 1/(a+1)]} \right) + O\left(m^{ \frac{1}{2} - [\frac{0.5(\xi-2)(1-c)}{6\xi-8}-\frac{\varepsilon(\xi-1)^2}{2\xi-4}] }\right) \text{ a.s.}
    \end{align*}
\end{lemma}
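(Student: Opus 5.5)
The proof is a telescoping decomposition of the partial sum. Each piece is controlled by one of the almost sure estimates already established, and $N$ is converted to $m$ via $a_N\approx N^{a+1}\le m$.

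\textbf{Step 1 (tower to approximating sets).} Fix $m\in[a_N,a_{N+1})$. The second estimate of Lemma \ref{asip1}, with $p=m$, replaces $\sum_{r\le m}[\mathbbm{1}_{\pi^{-1}B}\circ F^r-\mu_\Delta(\pi^{-1}B)]$ by
\[
\sum_{n\le N+1}\sum_{r\in I_n\cap[0,m]}\big[\mathbbm{1}_{\overline{A}_{2k_n}}\circ F^{r-k_n}-\mu_\Delta(\overline{A}_{2k_n})\big].
\]
The cost is $O(m^{0.5-[\varepsilon\min\{\xi-2,\alpha-1\}/4-1/(a+1)]})$ a.s.

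\textbf{Step 2 (split the blocks).} Using the decomposition displayed before Lemma \ref{ruleoutsmall}, I split each truncated block into three kinds of pieces: those over $I_{n,i}'$, those over $J_{n,i}$, and those over $J_n$, all intersected with $[0,m]$. Lemma \ref{ruleoutsmall}, applied with $p=m$, bounds the total $J_n$ contribution by $O(m^{0.25})$ and the total $J_{n,i}$ contribution by $O(m^{0.3})$, uniformly over the truncation.

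The remaining $I'$-pieces are of two types. First, the complete subblocks $(n,i)\le(N+1,I)$ give exactly $\sum_{(n,i)\le(N+1,I)}Y_{n,i}$. Second, there is at most one incomplete subblock, $I'_{N+1,I+1}\cap[0,m]$; if $m$ lies in block $N+1$ or at its boundary, this is a truncated piece of the first or last subblock. I use the index shift $F^{r-k_n}$ versus $F^r$ exactly as the paper does in that decomposition. Stationarity makes the shift immaterial in distribution, and the paper's Step 3 already treats the summands in the form written there.

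\textbf{Step 3 (the partial subblock).} This is the main obstacle: the sup over partial sums inside a single subblock must be controlled uniformly in the subblock index. Lemma \ref{maxinequality}, applied at index $N+1$, gives
\[
\sup_I\sup_u\Big|\sum_{j\in I'_{N+1,I}\cap[0,u]}\big[\mathbbm{1}_{\overline{A}_{2k_{N+1}}}\circ F^j-\mu_\Delta(\overline{A}_{2k_{N+1}})\big]\Big|\lesssim (N+1)^{a(\frac12-w/2)}\quad\text{a.s.}
\]
Since $(N+1)^{a}\lesssim m^{a/(a+1)}\le m$, this is $O\big(m^{\frac12-[\frac{0.5(\xi-2)(1-c)}{6\xi-8}-\frac{\varepsilon(\xi-1)^2}{2\xi-4}]}\big)$, because an exponent $a\theta$ with $\theta<\frac12$ is bounded by $m^{\theta}$.

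\textbf{Step 4 (Gaussian approximation).} The second estimate of Lemma \ref{asip2}, applied at $(N+1,I)$, replaces $\sum_{(n,i)\le(N+1,I)}Y_{n,i}$ by $\sum_{(n,i)\le(N+1,I)}G_{n,i}$. The cost is $O(((N+1)^{a+1})^{\frac12-(\varepsilon_{00}-4\varepsilon_{00}^2)})+O(((N+1)^{a+1})^{0.25})$. Since $(N+1)^{a+1}\approx a_{N+1}\approx a_N\le m$, this is $O(m^{\frac12-(\varepsilon_{00}-4\varepsilon_{00}^2)})+O(m^{0.25})$.

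Summing the errors from Steps 1–4 gives exactly the claimed list of error terms. All estimates are almost sure, and there are countably many of them, so they hold simultaneously on a full-measure set.
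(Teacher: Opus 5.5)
Your proposal is correct and follows the paper's proof essentially step for step: Lemma \ref{asip1} passes to the sets $\overline{A}_{2k_n}$, Lemma \ref{ruleoutsmall} discards the $J_n$ and $J_{n,i}$ pieces, Lemma \ref{asip2} handles the complete subblocks and Lemma \ref{maxinequality} the single partial subblock, with $N^{a+1}\approx a_N\le m$ converting every bound into a power of $m$. One small correction: the index shift needs no appeal to stationarity, and equality in distribution would not suffice for an almost sure statement anyway. The identity $I_n-k_n=J_n\cup I_n'$ makes the decomposition before Lemma \ref{ruleoutsmall} an exact pathwise identity, and the resulting change of truncation point ($m-k_{N+1}$ instead of $m$) is absorbed by the suprema in Lemmas \ref{ruleoutsmall} and \ref{maxinequality}.
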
 
\begin{proof}By Lemma \ref{asip1} and Lemma \ref{ruleoutsmall}, we obtain almost surely,
    \begin{align*}
    &\sum_{r \le m} \left[ \mathbbm{1}_{\pi^{-1}B} \circ F^r - \mu_\Delta(\pi^{-1}B) \right]  \\
    &=\sum_{n \le N+1} \sum_{r \in [0, m] \cap I_n} \left[ \mathbbm{1}_{\overline{A}_{2k_n}} \circ F^{r-k_n} - \mu_\Delta(\overline{A}_{2k_n}) \right]+ O\Big( m^{0.5-[\varepsilon \min\{\xi-2,\alpha-1\}/4 - 1/(a+1)]}  \Big)\\
    &=\sum_{n \le N+1} \sum_{j \in [0, m] \cap I_n'} \left[ \mathbbm{1}_{\overline{A}_{2k_n}} \circ F^j - \mu_\Delta(\overline{A}_{2k_n}) \right]+O\left( m^{0.25} \right) + O\left( m^{0.5-[\varepsilon \min\{\xi-2,\alpha-1\}/4 - 1/(a+1)]} \right)\\
    &=\sum_{n \le N+1} \sum_{j \in [0, m] \cap \bigcup_i I_{n,i}'} \left[ \mathbbm{1}_{\overline{A}_{2k_n}} \circ F^j - \mu_\Delta(\overline{A}_{2k_n}) \right] +O\left( m^{0.3} \right)+O\left( m^{0.25} \right) \\
    &\quad+ O\left( m^{0.5-[\varepsilon \min\{\xi-2,\alpha-1\}/4 - 1/(a+1)]} \right).
    \end{align*}

    Now by Lemma \ref{asip2}, we can continue our estimate
    \begin{align*}
    &=\sum_{j \in \bigcup_{(n,i) \le (N+1, I)} I_{n,i}'} \left[ \mathbbm{1}_{\overline{A}_{2k_n}} \circ F^j - \mu_\Delta(\overline{A}_{2k_n}) \right]\\
    &\quad + O\left(\sup_{i \le c_{N+1}+1}\sup_v \left| \sum_{j \in I_{N+1, i}' \cap [0, v]} \left[ \mathbbm{1}_{\overline{A}_{2k_{N+1}}} \circ F^j - \mu_\Delta(\overline{A}_{2k_{N+1}}) \right] \right|\right)\\
    &\quad+O\left( m^{0.3} \right)+O\left( m^{0.25} \right) + O\left( m^{0.5-[\varepsilon \min\{\xi-2,\alpha-1\}/4 - 1/(a+1)]} \right)\\
    &=\sum_{(n,i) \le (N+1, I)} G_{n,i} + O\left( m^{\frac{1}{2} - (\varepsilon_{00} - 4\varepsilon_{00}^2)} \right) + O\left(m^{\frac{1}{4}}\right)\\
    &\quad + O\left(\sup_{i\le c_{N+1}+1}\sup_v \left| \sum_{j \in I_{N+1, i}' \cap [0, v]} \left[ \mathbbm{1}_{\overline{A}_{2k_{N+1}}} \circ F^j - \mu_\Delta(\overline{A}_{2k_{N+1}}) \right] \right|\right)\\
    &\quad +O\left( m^{0.3} \right)+O\left( m^{0.25} \right) + O\left( m^{0.5-[\varepsilon \min\{\xi-2,\alpha-1\}/4 - 1/(a+1)]} \right)\\
    &=\sum_{(n,i) \le (N+1, I)} G_{n,i} + O\left( m^{\frac{1}{2} - (\varepsilon_{00} - 4\varepsilon_{00}^2)} \right)+ O\left(m^{ \frac{1}{2} - [\frac{0.5(\xi-2)(1-c)}{6\xi-8}-\frac{\varepsilon(\xi-1)^2}{2\xi-4}] }\right) \\
    &\quad +O\left( m^{0.3} \right)+O\left( m^{0.25} \right)+ O\left( m^{0.5-[\varepsilon \min\{\xi-2,\alpha-1\}/4 - 1/(a+1)]} \right)
\end{align*}where the second  $O(\cdot)$ is due to Lemma \ref{maxinequality}.
\end{proof}

Now we approximate $\sum_{(n,i) \le (N+1, I)} G_{n,i}$ by $ B_{\sigma^2m}$.
\begin{lemma}\label{asip3}
when  $1-c<\frac{1-\max\{3-\xi,0\}}{8}, \varepsilon<\frac{1-\max\{3-\xi,0\}}{8(\xi-1)},\xi\varepsilon<1, 1-c < \frac{1}{8} , \varepsilon_1 < \frac{1}{8} , \varepsilon < \frac{1}{8(\xi-1)}, \varepsilon < \frac{1}{\min\{\alpha-1,\xi-2\}},  a \ge 8$ and $a > \frac{1}{1-\max(2-\min\{\xi-1, \alpha\},0) }$,

\[ \sum_{(n,i) \le (N+1,I)} G_{n,i} - B_{\sigma^2 m} = O(m^{u/2}\log m)+O(m^{\max\{2-\min(\xi-1,\alpha),0\}/2}\log^{2} m) \text{ a.s.}\]
where $$u:= \max \left\{\frac{a}{a+1}(1-c+(\xi-1)\varepsilon+\max\{3-\xi,0\}) + \frac{1}{a+1}, \frac{\xi a\varepsilon+1}{a+1}, \frac{a}{a+1}(1-c+c\varepsilon_1+(\xi-1)\varepsilon) + \frac{1}{a+1}, \right.$$
$$\left. \frac{a}{a+1}(1-\varepsilon \min\{\alpha-1, \xi-2\}) + \frac{1}{a+1},
  \frac{a}{a+1}, \max(2-\min\{\xi-1, \alpha\},0) + \frac{1}{a+1} \right\}<1$$  and the constants in $O(\cdot)$ are independent of $N,I,m$.
\end{lemma}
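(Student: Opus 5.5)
The plan is to embed the Gaussian sum into the Brownian motion and then control the time change. By construction $G_{n,i}=\sigma_{(n,i)}^2(B_{\delta_{(n,i)}^2}-B_{\delta_{(n,i)+1}^2})$ are independent centered Gaussians with $\mathbb{E}G_{n,i}^2=\mathbb{E}Y_{n,i}^2$. Hence $T_{(N,I)}:=\sum_{(n,i)\le (N,I)}G_{n,i}$ has the same law as a Gaussian process with independent increments and variance $V_{(N,I)}:=\sum_{(n,i)\le(N,I)}\mathbb{E}Y_{n,i}^2$. Enlarging the probability space if necessary, I would realize these as increments of a single standard Brownian motion $W$, so that $T_{(N,I)}=W_{V_{(N,I)}}$. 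Renaming $W$ as $B$, the target becomes
\[
|W_{V_{(N+1,I)}}-W_{\sigma^2 m}|,
\]
which by the Csörgő–Révész modulus of continuity (or a Lévy-type bound with Borel–Cantelli along the subblock grid) is $O\bigl(\sqrt{|V_{(N+1,I)}-\sigma^2 m|\,\log m}\bigr)$ almost surely. So everything reduces to a deterministic estimate
\[
|V_{(N+1,I)}-\sigma^2 m|\precsim m^{u}+m^{\max\{2-\min(\xi-1,\alpha),0\}}\log^2 m .
\]

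To get this estimate I will compare $V_{(N+1,I)}$ with $\mathbb{E}(S_m\circ\pi)^2$ step by step, and then compare that with $\sigma^2 m$. Each step produces one entry of $u$.

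First, sum the independence defect (\ref{6}) over the blocks $n\le N+1$. This gives $\sum_n |I_n|^{1-c+\varepsilon(\xi-1)+\max\{3-\xi,0\}}\log n$, which is of order $m^{\frac{a}{a+1}(1-c+(\xi-1)\varepsilon+\max\{3-\xi,0\})+\frac1{a+1}}$, using $m\approx N^{a+1}$.

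Second, restore the removed pieces $J_n$ and $J_{n,i}$. For this I use the last inequality of Lemma \ref{momentbound} with the factor $k_n^{\xi-1}\approx |I_n|^{\varepsilon(\xi-1)}$. Summed over blocks, this yields the terms $\frac{\xi a\varepsilon+1}{a+1}$ and $\frac{a}{a+1}(1-c+c\varepsilon_1+(\xi-1)\varepsilon)+\frac1{a+1}$.

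Third, replace $\overline{A}_{2k_n}$ by $F^{-k_n}\pi^{-1}B$ using the second estimate of Lemma \ref{approximation}. Each block costs $|I_n|^{1-\varepsilon\min\{\alpha-1,\xi-2\}}$, which gives the fourth term of $u$.

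Fourth, handle the cross-covariances between different big blocks $I_n$, $I_{n'}$ for the original observable. Lemma \ref{variancegrow} bounds each such term by $|I_n|^{\max(2-\min\{\xi-1,\alpha\},0)}\log$. Summed over $n\le N+1$, this gives the last entry of $u$.

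Fifth, the incomplete final subblock between $(N+1,I)$ and $m$ has length at most $|I_{N+1}|^c\le m^{a/(a+1)}$, which gives the entry $\frac{a}{a+1}$.

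Finally, Lemma \ref{variancesigma} gives $\mathbb{E}(S_m\circ\pi)^2=\sigma^2 m+O(m^{\max\{2-\min(\xi-1,\alpha),0\}}\log m)$. Its square root, after applying the modulus bound, produces the second error term of the statement.

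The main obstacle is bookkeeping the cross terms in the third and fourth steps without using stationarity across blocks. The observables $\mathbbm{1}_{\overline{A}_{2k_n}}$ change with $n$, so the variance of the concatenated sum is not directly that of $S_m$. I will first expand everything in terms of $\mathbbm{1}_{\pi^{-1}B}$, which is a stationary observable. Then I will bound the difference of each block observable from it in $L^2$ via Lemma \ref{approximation}, and handle the mixed cross terms by Cauchy–Schwarz. These mixed terms are dominated by the $\varepsilon\min\{\alpha-1,\xi-2\}$ entry, as long as the exponent is halved appropriately, and that halving is absorbed since the whole bound enters only through $m^{u/2}$.

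Checking $u<1$ is then immediate from the listed constraints on $a,c,\varepsilon,\varepsilon_1$. In particular, $a>1/(1-\max(2-\min\{\xi-1,\alpha\},0))$ handles the last entry.
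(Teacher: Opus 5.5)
Your proposal is correct and follows the paper's proof. It uses the same chain of variance comparisons, namely (\ref{6}), the last inequality of Lemma \ref{momentbound}, the second estimate of Lemma \ref{approximation}, Lemma \ref{variancegrow} for covariances between big blocks, and Lemma \ref{variancesigma}, followed by the Brownian modulus of continuity. Your explicit realization of $\sum G_{n,i}$ as a time-changed Brownian motion makes precise what the paper leaves implicit.

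One correction to your fourth step: Lemma \ref{variancegrow}, applied to $I_n$ against the union of the later blocks, gives a bound of order $a_N^{\max(2-\min\{\xi-1,\alpha\},0)}\log a_N$ per block, not $|I_n|^{\max(2-\min\{\xi-1,\alpha\},0)}\log|I_n|$. Summing that over $n\le N$ is what produces the last entry of $u$.

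The ``main obstacle'' you describe does not actually arise. Every comparison involving $\overline{A}_{2k_n}$ stays inside one big block, and the only cross-block covariances are those of the stationary observable $\mathbbm{1}_{\pi^{-1}B}$. This is just as well, because your Cauchy--Schwarz fallback would halve $\varepsilon\min\{\alpha-1,\xi-2\}$ in the fourth entry of $u$. That loss is not covered by the stated $u$ in general: the square root in $m^{u/2}$ halves the whole exponent $u$, not this gap.
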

\begin{proof}
    First we need to compare the variances. By Lemma \ref{variancegrow},
    \begin{align*}
        &\mathbb{E} \left[ \sum_{r \le m} \mathbbm{1}_{B} \circ f^r - \mu_{\mathcal{M}}(B) \right]^2\\
        &= \mathbb{E} \left[ \sum_{n \le N} \sum_{i \in I_n} \mathbbm{1}_B \circ f^i - \mu_{\mathcal{M}}(B) + \sum_{i \in I_{N+1} \cap [0, m]} \mathbbm{1}_B \circ f^i - \mu_{\mathcal{M}}(B) \right]^2\\
        &= \sum_{n \le N} \mathbb{E} \left[ \sum_{i \in I_n} \mathbbm{1}_B \circ f^i - \mu_{\mathcal{M}}(B) \right]^2+ \mathbb{E} \left[ \sum_{i \in I_{N+1} \cap [0, m]} \mathbbm{1}_B \circ f^i - \mu_{\mathcal{M}}(B) \right]^2 \\
        &\quad + 2 \mathbb{E} \left[ \sum_{n \le N} \sum_{i \in I_n} \mathbbm{1}_B \circ f^i - \mu_{\mathcal{M}}(B) \right] \left[ \sum_{j \in I_{N+1} \cap [0, m]} \mathbbm{1}_B \circ f^j - \mu_{\mathcal{M}}(B) \right] \\
        &\quad + 2 \sum_{a' < b \le N} \mathbb{E} \left[ \sum_{i \in I_{a'}} \mathbbm{1}_B \circ f^i - \mu_{\mathcal{M}}(B) \right] \left[ \sum_{j \in I_b} \mathbbm{1}_B \circ f^j - \mu_{\mathcal{M}}(B) \right]\\
        &= \sum_{n \le N} \mathbb{E} \left[ \sum_{i \in I_n} \mathbbm{1}_B \circ f^i - \mu_{\mathcal{M}}(B) \right]^2 + O(|I_{N+1}| + N^{(a+1)\max(2-\min\{\alpha, \xi-1\}, 0)}\log N)\\
        &\quad + 2\sum_{a'=1}^N \sum_{a' < b} \mathbb{E} \left[ \sum_{i \in I_{a'}} \mathbbm{1}_B \circ f^i - \mu_{\mathcal{M}}(B) \right] \left[ \sum_{j \in I_b} \mathbbm{1}_B \circ f^j - \mu_{\mathcal{M}}(B) \right]\\
        &= \sum_{n \le N} \mathbb{E} \left[ \sum_{i \in I_n} \mathbbm{1}_{\pi^{-1}B} \circ F^i - \mu_{\Delta}(\pi^{-1}B) \right]^2  +O(N^a+ N^{(a+1)\max(2-\min\{\alpha, \xi-1\}, 0)}\log N) \\
        &\quad + O(N^{(a+1)\max(2-\min\{\alpha, \xi-1\}, 0) + 1}\log N).
        \end{align*}

        By Lemma \ref{momentbound}, Lemma \ref{approximation} and Lemma \ref{variancegrow} (i.e., $\mathbb{E} \left[ \sum_{i \in \bigcup_{j\le I}I_{N+1,j}} \mathbbm{1}_{\pi^{-1}B} \circ F^i - \mu_{\Delta}(\pi^{-1}B)  \right]^2 =O(N^{a})$), we continue the estimate 
        \begin{align*}
        &= \sum_{n \le N} \mathbb{E} \left[ \sum_{i \in I_n} \mathbbm{1}_{\pi^{-1}B} \circ F^i - \mu_{\Delta}(\pi^{-1}B)  \right]^2+\mathbb{E} \left[ \sum_{i \in \bigcup_{j\le I}I_{N+1,j}} \mathbbm{1}_{\pi^{-1}B} \circ F^i - \mu_{\Delta}(\pi^{-1}B)  \right]^2+O(N^a) \\
        &\quad + O(N^a + N^{(a+1)\max(2-\min\{\alpha, \xi-1\}, 0) + 1}\log N)\\
        &=\sum_{n \le N}\mathbb{E}\left[ \sum_{i \in I_n} \mathbbm{1}_{\overline{A}_{2k_n}} \circ F^{i-k_n} - \mu_\Delta(\overline{A}_{2k_n})\right]^2+\mathbb{E}\left[ \sum_{i \in \bigcup_{j \le I}I_{N+1,j}} \mathbbm{1}_{\overline{A}_{2k_{N+1}}} \circ F^{i-k_{N+1}} - \mu_\Delta(\overline{A}_{2k_{N+1}})\right]^2\\
        &\quad +\sum_{n \le N+1}O(|I_n|k_n^{-\min\{\xi-2,\alpha-1\}}) + O(N^a + N^{(a+1)\max(2-\min\{\alpha, \xi-1\}, 0) + 1}\log N)\\
         &=\sum_{n \le N}\mathbb{E}\left[\sum_{i} Y_{n,i}\right]^2+\mathbb{E}\left[\sum_{i\le I} Y_{N+1,i}\right]^2+\sum_{n \le N+1}O\left(\left[ \sum_{j \in J_n \bigcup \bigcup_{i}J_{n,i}} 1\right]\right)k_n^{\xi-1}\\
         &\quad +\sum_{n \le N+1}O(|I_n|k_n^{-\min\{\xi-2,\alpha-1\}}) + O(N^a + N^{(a+1)\max(2-\min\{\alpha, \xi-1\}, 0) + 1}\log N)
        \end{align*}
        
       By (\ref{6}) and $\mathbb{E}G_{n,i}^2=\mathbb{E}Y_{n,i}^2=\mathbb{E}\overline{Y}_{n,i}^2$, we continue the estimate
        \begin{align*}
         &= \sum_{n \le N} \sum_i \mathbb{E}G_{n,i}^2+\sum_{i\le I} \mathbb{E}G_{N+1,i}^2+\sum_{n\le N+1}\sum_{i=1}^{c_n} O(|I_n|^{\varepsilon(\xi-1)} |I_n|^{\max\{3-\xi, 0\}}\log n)\\
         &\quad + \sum_{n \le N+1}O\left(|I_n|^{\varepsilon} + \sum_{i=1}^{c_n}|I_n|^{c\varepsilon_1} \right)k_n^{\xi-1}  + O\left( \sum_{n \le N+1} |I_n| |I_n|^{-\varepsilon \min\{\alpha-1, \xi-2\}} \right)\\
         &\quad + O \left( N^a + N^{(a+1)\max(2-\min\{\alpha, \xi-1\}, 0) + 1}\log N\right)\\
    &= \sum_{(n,i) \le (N+1,I)} \mathbb{E} G_{n,i}^2+O\left(\sum_{n\le N+1}|I_n|^{1-c+\varepsilon(\xi-1)+\max\{3-\xi, 0\}}\log n\right)\\
    &\quad + O\left( \sum_{n \le N+1} |I_n|^{\xi \varepsilon} + |I_n|^{(1-c)+c\varepsilon_1+(\xi-1)\varepsilon} \right)+ O\left( \sum_{n \le N+1} |I_n| |I_n|^{-\varepsilon \min\{\alpha-1, \xi-2\}} \right)\\
    &\quad + O\left( N^a + N^{(a+1)\max(2-\min\{\alpha, \xi-1\}, 0) + 1}\log N\right)\\
    &= \sum_{(n,i) \le (N+1,I)} \mathbb{E} G_{n,i}^2+O((N^{a+1})^{u})
\end{align*}where $$u:= \max \left\{\frac{a}{a+1}(1-c+(\xi-1)\varepsilon+\max\{3-\xi,0\}) + \frac{1}{a+1}, \frac{\xi a\varepsilon+1}{a+1}, \frac{a}{a+1}(1-c+c\varepsilon_1+(\xi-1)\varepsilon) + \frac{1}{a+1}, \right.$$
$$\left. \frac{a}{a+1}(1-\varepsilon \min\{\alpha-1, \xi-2\}) + \frac{1}{a+1},
  \frac{a}{a+1}, \max(2-\min\{\xi-1, \alpha\},0) + \frac{1}{a+1} \right\}<1$$ when  $1-c<\frac{1-\max\{3-\xi,0\}}{8}, \varepsilon<\frac{1-\max\{3-\xi,0\}}{8(\xi-1)},\xi\varepsilon<1, 1-c < \frac{1}{8} , \varepsilon_1 < \frac{1}{8} , \varepsilon < \frac{1}{8(\xi-1)}, \varepsilon < \frac{1}{\min\{\alpha-1,\xi-2\}},  a \ge 8$ and $a > \frac{1}{1-\max(2-\min\{\xi-1, \alpha\},0) }$.

Therefore $B_{\mathbb{E}[\sum_{i\le m} \mathbbm{1}_B \circ f^i - \mu_{\mathcal{M}}(B)]^2} - \sum_{(n,i) \le (N+1,I)} G_{n,i} = O(m^{u/2}\log m)$ a.s. On the other hand, by Lemma \ref{variancesigma}, $\mathbb{E}[\sum_{i\le m} \mathbbm{1}_B \circ f^i - \mu_{\mathcal{M}}(B)]^2 = m\sigma^2 + O(m^{\max\{2-\min(\xi-1,\alpha),0\}}\log m)$. Hence, \[ B_{\mathbb{E}[\sum_{i\le m} \mathbbm{1}_B \circ f^i - \mu_{\mathcal{M}}(B)]^2} - B_{\sigma^2 m} = O(m^{\max\{2-\min(\xi-1,\alpha),0\}/2}\log^2 m) \text{ a.s.}\] Therefore, $ \sum_{(n,i) \le (N+1,I)} G_{n,i} - B_{\sigma^2 m} = O(m^{u/2}\log m)+O(m^{\max\{2-\min(\xi-1,\alpha),0\}/2}\log^2 m)$ a.s.
\end{proof}

Summarizing all results above, under the conditions of Lemma \ref{maxinequality}, Lemma \ref{asip3}, Lemma \ref{asip1}, Lemma \ref{blockvariance}, Lemma \ref{approximateindependnt}, Lemma \ref{verifyasipcriteria} and Lemma \ref{ruleoutsmall},  we conclude that,  \begin{align}
    &\sum_{r \le m} \left( \mathbbm{1}_{B} \circ f^r - \mu_{\mathcal{M}}(B) \right)\circ \pi=B_{\sigma^2 m}+ O\left( m^{\frac{1}{2} - (\varepsilon_{00} - 4\varepsilon_{00}^2)} \right)+O\left( m^{0.3} \right)+O\left( m^{0.25} \right)\nonumber \\
    &\quad + O\left( m^{0.5-[\varepsilon \min\{\xi-2,\alpha-1\}/4 - 1/(a+1)]} \right)  + O\left(m^{ \frac{1}{2} - [\frac{0.5(\xi-2)(1-c)}{6\xi-8}-\frac{\varepsilon(\xi-1)^2}{2\xi-4}] }\right) \nonumber\\
&\quad +O(m^{u/2}\log m)+O(m^{\max\{2-\min(\xi-1,\alpha),0\}/2}\log^{2} m)\text{ a.s.}\label{finalasip}\end{align}

\subsection*{Choice of parameters}

(\ref{finalasip}) is the ASIP on $\Delta$. To satisfy the conditions, we can choose a small $\varepsilon_1$, then choose $c$ sufficiently close to $1$, and then a sufficiently small $\varepsilon<c\varepsilon_1$. Finally choose a sufficiently large $a$. The possible choices are the following. \[
c=1-\frac{\min\{\xi-2,1\}}{64},\quad 
\varepsilon_1=\frac{\min\{\xi-2,1\}}{32},\quad
\varepsilon=\frac{(\xi-2)\min\{\xi-2,1\}^2}{1024(\xi-1)^2(3\xi-4)},
\]

$$
a = \frac{12288(\xi - 1)^2(3\xi - 4)}{\min\{\xi - 2, \alpha - 1\}(\xi - 2)\min\{\xi - 2, 1\}^2}
$$

$$
u = \frac{12288(\xi - 1)^2(3\xi - 4)}{12288(\xi - 1)^2(3\xi - 4) + \min\{\xi - 2, \alpha - 1\}(\xi - 2)\min\{\xi - 2, 1\}^2}
$$

$$
\varepsilon_{00} = \frac{\min\{\xi - 2, \alpha - 1\}(\xi - 2)\min\{\xi - 2, 1\}^2}{12288(\xi - 1)^2(3\xi - 4) + \min\{\xi - 2, \alpha - 1\}(\xi - 2)\min\{\xi - 2, 1\}^2}
$$

Then  (\ref{finalasip}) becomes 
\begin{align}\label{finalasip2}
   S_m \circ \pi = B_{\sigma^2 m} +O\!\left(
m^{\frac12-
\frac{\min\{\xi - 2, \alpha - 1\}(\xi - 2)\min\{\xi - 2, 1\}^2}{49152(\xi - 1)^2(3\xi - 4) +4 \min\{\xi - 2, \alpha - 1\}(\xi - 2)\min\{\xi - 2, 1\}^2}
}
\right) \text{ a.s. }
\end{align}

Now we can conclude the proof by projecting this ASIP from $\Delta$ to $\mathcal{M}$.

\begin{lemma}\label{finalasip1} One can redefine $(S_n)_{n\ge 1}$ without changing its
distribution on a (richer) probability space  $\Omega$ supporting a Brownian motion $(\mathbb{B}_t)_{t \ge 0}$, such that for any $n>0$, \begin{align*}
   &S_n= \mathbb{B}_{\sigma^2 n}+O\!\left(
n^{\frac12-
\frac{\min\{\xi - 2, \alpha - 1\}(\xi - 2)\min\{\xi - 2, 1\}^2}{49152(\xi - 1)^2(3\xi - 4) +4 \min\{\xi - 2, \alpha - 1\}(\xi - 2)\min\{\xi - 2, 1\}^2}
}
\right) \text{ a.s.}
   \end{align*}
\end{lemma}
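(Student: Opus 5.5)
The plan is to transfer the almost sure statement (\ref{finalasip2}), which lives on the tower probability space $(\Delta,\mu_\Delta)$ (enlarged by the Skorokhod embedding and the coupling of Lemma \ref{usefulprob1}), down to $(\mathcal{M},\mu_{\mathcal{M}})$ using only the fact that $\pi:\Delta\to\mathcal{M}$ pushes $\mu_\Delta$ to $\mu_{\mathcal{M}}$ and intertwines $F$ with $f$. The first step is to note that the process $(S_n\circ\pi)_{n\ge1}$ on $(\Delta,\mu_\Delta)$ has exactly the same joint law as $(S_n)_{n\ge 1}$ on $(\mathcal{M},\mu_{\mathcal{M}})$. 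Indeed, $S_n\circ\pi=\sum_{0\le i\le n}[\mathbbm{1}_{\pi^{-1}B}\circ F^i-\mu_\Delta(\pi^{-1}B)]$ by $\pi\circ F=f\circ\pi$. For any finitely many indices and any Borel set, the event $\{(S_{n_1}\circ\pi,\dots,S_{n_r}\circ\pi)\in E\}$ is $\pi^{-1}$ of $\{(S_{n_1},\dots,S_{n_r})\in E\}$. Its $\mu_\Delta$-measure therefore equals the $\mu_{\mathcal{M}}$-measure of the latter, since $\pi_*\mu_\Delta=\mu_{\mathcal{M}}$. So the finite-dimensional distributions coincide, and hence so do the laws on $\mathbb{R}^{\mathbb{N}}$.

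The second step is to build $\Omega$. Let $\Omega$ be the richer space produced in Steps \ref{iidapproximate}--\ref{gaussianapproximate}. On it live a copy of $(S_n\circ\pi)_n$ with its original distribution and the Brownian motion $(B_t)$, and (\ref{finalasip2}) holds almost surely there. Each of the intermediate redefinitions (Lemma \ref{usefulprob1} and the Skorokhod embedding) preserves the distribution of the underlying sequence. Chaining them, the process $(S_n\circ\pi)_n$ on $\Omega$ has the law of $(S_n\circ\pi)_n$ under $\mu_\Delta$, and by the first step this is the law of $(S_n)_n$ under $\mu_{\mathcal{M}}$. Setting $\mathbb{B}:=B$ and calling this process $S_n$ gives a redefinition of $(S_n)$ without changing its distribution, and the almost sure estimate is inherited verbatim. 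If one prefers an explicit construction, a standard alternative is available: take the joint law $\mathcal{L}$ of $((S_n\circ\pi)_n,B)$ on $\mathbb{R}^{\mathbb{N}}\times C[0,\infty)$ and realize it canonically. The event in (\ref{finalasip2}), namely that $\sup_n |S_n-B_{\sigma^2 n}|\,n^{-(1/2-\gamma)}<\infty$, is measurable in this product space, so it still has full measure.

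The third step is a bookkeeping point. The estimates leading to (\ref{finalasip2}) were proved for $m\in[a_N,a_{N+1})$ with $N$ large, and the Borel--Cantelli arguments give bounds valid only eventually, with random constants. To state the bound for every $n>0$, as Lemma \ref{finalasip1} does, I will absorb the finitely many initial $n$ into the random constant. This is legitimate because $|S_n|\le n+1$ and $B_{\sigma^2 n}$ is finite for each fixed $n$. The main potential obstacle is making sure that every auxiliary enlargement really preserves the joint law of the whole sequence $(S_n\circ\pi)_n$, not just of the block variables $Y_{n,i}$. I would address this by performing each coupling through conditional distributions given the full sequence, using the Berkes--Philipp gluing/Skorokhod-type lemma. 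This keeps the marginal law of $(S_n\circ\pi)_n$ intact at every stage.
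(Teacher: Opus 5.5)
Your proposal is correct and follows the paper's argument. The paper likewise uses the equality in law of $(S_n\circ\pi)_{n\ge1}$ under $\mu_\Delta$ and $(S_n)_{n\ge1}$ under $\mu_{\mathcal{M}}$ to produce a Brownian motion $\mathbb{B}$ on a richer space with $((S_n\circ\pi)_{n\ge1},B)=_d((S_n)_{n\ge1},\mathbb{B})$, so that the almost sure bound (\ref{finalasip2}) transfers verbatim; the only cosmetic difference is that the paper attaches $\mathbb{B}$ to an extension of $(\mathcal{M},\mu_{\mathcal{M}})$ itself, whereas you realize the joint law on the canonical product space, and both give the statement as posed.
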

\begin{proof}
 We argue as in the proof of Theorem 6.1 of \cite{suetds}: there is a Brownian motion $(\mathbb{B}_t)_{t \ge 0}$ defined on a richer probability space such that $((S_n\circ \pi)_{n \ge 1}, B)=_d((S_n)_{n \ge 1},\mathbb{B})$. Hence (\ref{finalasip2}) implies the required approximations.
\end{proof}

With all steps above, we conclude the proof of Theorem \ref{asip}.

\section{Applications to hyperbolic billiards with CMZ structures}\label{app}
Denote by $Q$ a billiard table, i.e., a closed region in the Euclidean plane with piecewise $C^3$-smooth boundary $\partial Q$. The phase space of a billiard  $\mathcal {M}$ is $\partial Q \times [-\pi/2, \pi/2]$. Define $\pi_{\partial_{Q}}x:=q$ for any $x=(q, \phi) \in \mathcal{M}$. Let $f: \mathcal{M}\to \mathcal{M}$ be a billiard map which maps points of the phase space at reflection times to their images at the next reflection time. It preserves an invariant measure $d\mu_{\mathcal{M}}:=(2\Leb_{\partial Q} \partial Q)^{-1}\cos \phi d\phi dq$. In what follows all billiards under consideration are assumed to have CMZ structures (see section \ref{cmz}, and proofs in \cite{hongkun,markarian, billiardwithvariousrates}). We will call the boundary components with zero curvature flat components, while dispersing components are convex inwards, and focusing components are convex outwards billiard tables. The singularities of $f$ and $f^{-1}$  partition $\mathcal{M}$ into countably many pieces. A locally H\"older function means that it is H\"older continuous on each such piece. Limit theorems for hyperbolic billiard systems are already proven for  regular (H\'older) observables. However, the real experiments, for example the ones with optical cavities \cite{laser}, deal with refractive light rays. Therefore, the observable in this experiment is an indicator function of the set of configurations in which the light rays reach the boundary of the dielectric cavity with small angles smaller than some $\phi_0$, i.e., $\mathbbm{1}_{\{(q,\phi) \in \mathcal{M}: |\phi|< \phi_0 \}}$. Such an indicator function is not continuous, nor is it locally H\"older. If we define $\mathcal{B}:=\{B \subseteq \mathcal{M}: B \text{ is a finite union of polygonal regions}\}$, then the support of this function is a horizontal strip in the phase space,  belongs to $\mathcal{B}$. 

Since all billiard systems considered here have the CMZ structures with $\int R_p^{\xi}d\mu_{\Lambda}< \infty$ and $\xi>1$, Theorem \ref{mixingrate} holds for such indicator observables. We will now verify the conditions of other theorems for hyperbolic billiard systems with such indicator observables.

\subsection{Squashes and other Stadium-type billiards}
A billiard table $Q$ of a squash billiard is a convex domain bounded by two circular arcs and two straight (flat) segments  tangent to these arcs at their common endpoints. A squash billiard is called a stadium if the flat sides are parallel, see Figure \ref{F3}. (Initially being called squash billiards, they were later sometimes called ``skewed" stadia, drive-belt billiards, etc, see Figure \ref{F4}). Note that squashes contain a boundary arc, which is longer than a half of the smaller circle. This class of billiards was introduced in \cite{buni74,bunimovich3}. Let $X\subseteq \mathcal{M}$ be the region where the first collision (in a series of consecutive collisions with one and the same circular arc) with the circular arcs occurs. Denote by $R$ the first return time to $X$ for the billiard map $f$. Then, Theorem \ref{mldp} holds with $\xi=2,\alpha=1$, where $D_{\ell}$ in Assumption \ref{A} is $\{(q,\phi)\in \mathcal{M}: \phi \in [-\ell^{-1},\ell^{-1}] \}$. 

\begin{figure}[!htb]
   \begin{minipage}{0.6\textwidth}
     \includegraphics[width=.6\linewidth]{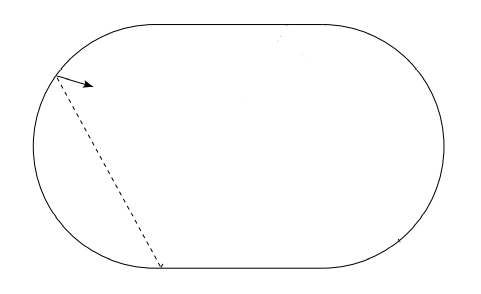}
     \caption{Stadium billiard}
     \label{F3}
   \end{minipage}\hfill
   \begin{minipage}{0.6\textwidth}
     \includegraphics[width=.6\linewidth]{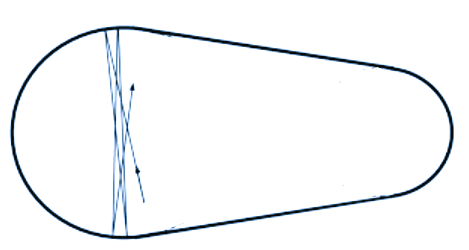}
     \caption{Squash billiard}\label{F4}
   \end{minipage}
\end{figure}

\subsection{Another class of billiards with focusing components} In this section we consider billiard tables $Q$, for which each smooth component $\Gamma_i \subseteq \partial Q$ of the boundary is either dispersing, i.e., convex inwards, or focusing, i.e., convex
outwards. The curvature of every dispersing component is bounded
away from zero and infinity. We assume that every focusing component is an arc of a circle, and that there are no points of the boundary $\partial Q$ on that circle or inside it, other than this arc itself (that is, the SFC-condition is satisfied). We assume also that two dispersing components intersect (if they do) transversely (i.e., there are no cusps) and, besides, each focusing arc is not longer than a half of the corresponding circle. Denote the union of dispersing components by $\partial Q^{+}$, and the union of focusing components by $\partial Q^{-}$. Let $X\subseteq \mathcal{M}$ be \begin{gather*}
    X:=(\partial Q^{+}\times [-\pi/2,\pi/2]) \bigcup \{x \in \mathcal{M}: \pi_{\partial Q}x \in \partial Q^{-}, \pi_{\partial Q}x \text{ and }\pi_{\partial Q}(f^{-1}x)\text{ belong to  different }\Gamma_i \},
\end{gather*}  i.e., only the first collisions in a series of consecutive reflections at the circular arcs, and any collisions with the dispersing components, are included in $X$. Therefore, the case with $R>1$ may occur only in a series of consecutive reflections off a circular arc.

\begin{figure}[!htb]
   \begin{minipage}{0.6\textwidth}
\includegraphics[width=.6\linewidth]{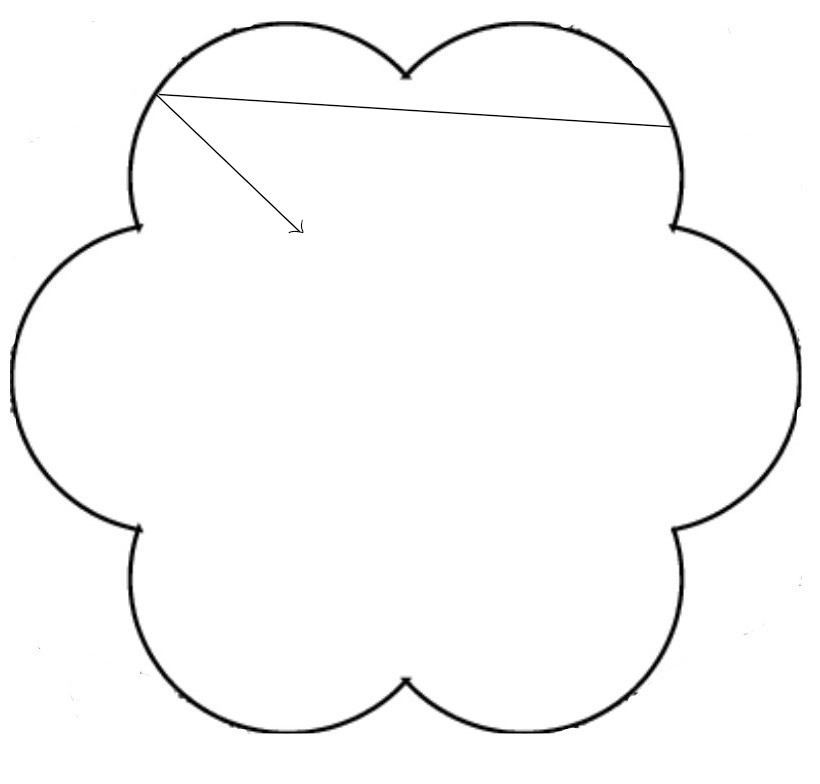}
     \caption{Flower billiard}
     \label{F5}
   \end{minipage}\hfill
   \begin{minipage}{0.48\textwidth}
     \includegraphics[width=.6\linewidth]{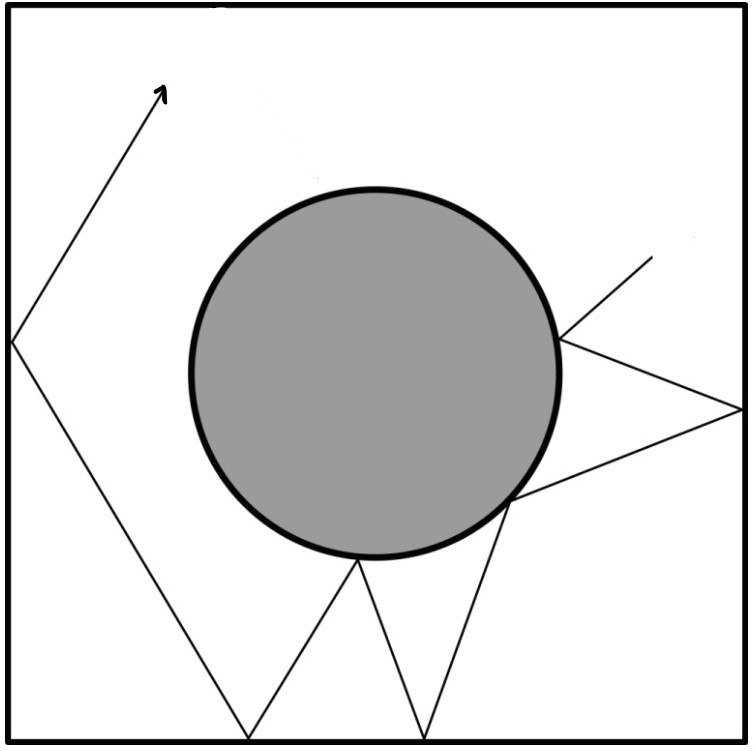}
     \caption{Semi-dispersing billiard}\label{F6}
   \end{minipage}
\end{figure}

This example belongs to the class of  (the so-called) flower billiards, e.g., see Figure \ref{F5}. Denote by  $e_i$ the endpoints of the circular arcs. Then  Theorem \ref{mldp} holds for $\alpha=2,\xi=3$, where $D_{\ell}$ in Assumption \ref{A} is $\{(q,\phi)\in \mathcal{M}:  \phi \in [- \pi/2, - \pi/2+\ell^{-1}] \cup [\pi/2-\ell^{-1}, \pi/2] \}$. Theorem \ref{asip} and Theorem \ref{clt} hold for $\alpha=2$ and any $\xi\in (2.9,3)$.

\subsection{Semi-dispersing billiards} In this subsection we consider billiard tables of the following type. Let $R_0 \subseteq \mathbb{R}^2$ be a rectangle, and the scatterers $B_1, \cdots, B_r \subseteq \interior R_0$ are closed strictly convex subdomains with smooth (at least $C^3)$ or piecewise smooth boundaries, with curvatures bounded away from zero. We also assume that $B_i \bigcap B_j=\emptyset$ for $i\neq j$. The boundary of a billiard table $Q = R_0\setminus \bigcup_i\interior{B_i}$ is partially dispersing (convex inwards), and it is  partially neutral (flat), e.g. see Figure \ref{F6}. The corresponding flat part is $\partial R_0$.

Denote  by $\partial Q^{+}$ the union of dispersing components, and the union of four flat sides is denoted by $\partial R_0$. Let $X:=\{x\in \mathcal{M}: \pi_{\partial Q}x\in \partial Q^+\}$, where $R$ is the first return time to $X$. If $\sup R<\infty$, then this billiard system has exponential decay of correlations, i.e., it is not slowly mixing. So we assume that $\sup R=\infty$. Then $f^R$ is a Sinai billiard map with exponential decay of correlations. 

Thus Theorem \ref{mldp} holds with $\xi=2,\alpha=1$, where $D_{\ell}$ in Assumption \ref{A} is $
\{ (q, \phi) : q \in \partial R_0, |\phi | < \ell^{-1} \}$. 

\subsection{Dispersing billiards with and without finite horizon}
Although this paper addresses slowly mixing billiards, we still include here two examples of fast mixing dispersing billiards, for which our limit theorems hold, and these theorems are new, in contrast to \cite{MN1,Y,pene}. In dispersing billiards the boundary is convex inwards into the billiard table. These billiards are hyperbolic dynamical systems with singularities that arise from orbits tangent to the boundary and to orbits hitting singular points of the boundary. Among the corresponding examples are Sinai billiards on the 2-dimensional torus (Figure \ref{F1}) and diamond billiards without cusps (Figure \ref{F2}). Theorem \ref{clt} and Theorem \ref{asip} hold because $\xi, \alpha$ can assume arbitrarily large values. 

\begin{figure}[!htb]
   \begin{minipage}{0.6\textwidth}
     \includegraphics[width=.7\linewidth]{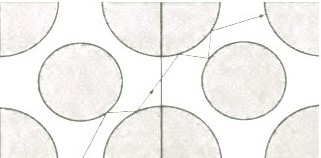}
     \caption{Sinai billiard}
     \label{F1}
   \end{minipage}\hfill
   \begin{minipage}{0.5\textwidth}     \includegraphics[width=.6\linewidth]{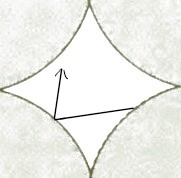}
     \caption{Diamond billiard}\label{F2}
   \end{minipage}
\end{figure}

\section*{Acknowledgements} Y. Su thanks Prof. Carlangelo Liverani for helpful discussions.


%
%

\medskip

\bibliography{bibtext}

\end{document}